\newtheorem{theorem}{Theorem}[section]
\newtheorem{corollary}[theorem]{Corollary}
\newtheorem{lemma}[theorem]{Lemma}
\theoremstyle{definition}
\theoremstyle{definition}
\theoremstyle{definition}
\newtheorem{assumption}[theorem]{Assumption}
\def\dashint{\operatorname%
{\,\,\text{\bf--}\kern-.98em\DOTSI\intop\ilimits@\!\!}}
\newcommand{\WO}[2]{\overset{\scriptscriptstyle0}{W}\,\!^{#1}_{#2}}
\def\sfa{{\sf a}}
\def\sfb{{\sf b}}
\def\sfc{{\sf c}}
\def\bR{\mathbb{R}}
\def\fL{\mathfrak{L}}
\def\fq{\mathfrak{q}}
\def\cD{\mathcal{D}}
\def\cO{\mathcal{O}}
\def\cL{\mathcal{L}}
\title[Parabolic equations in $L_p$-spaces with mixed norms]{
Parabolic equations with measurable coefficients in $L_p$-spaces with mixed norms}
\author{Doyoon Kim}
\address{Department of Mathematics and Statistics,
University of Ottawa,
585 King Edward Avenue,
Ottawa, Ontario K1N 6N5 Canada} 
\email{kdoyoon@uottawa.ca}
\subjclass[2000]{
35K10,
35R05,
35A05 
}
\keywords{second order parabolic equations, vanishing mean oscillation,
Sobolev spaces with mixed norms.}
\begin{document}

\begin{abstract}
The unique solvability of parabolic equations in Sobolev spaces with mixed norms
is presented.
The second order coefficients (except $a^{11}$) are 
assumed to be only measurable in time and one spatial variable,
and VMO in the other spatial variables.
The coefficient $a^{11}$ is measurable in one spatial variable
and VMO in the other variables.
\end{abstract}

\maketitle

\section{Introduction}

In this paper we study parabolic equations of non-divergence type in $L_p$-spaces with mixed norms.
Specifically, we consider equations of the form
\begin{equation} \label{para_main_eq}
u_t + a^{ij}(t,x) u_{x^i x^j} + b^{i}(t,x) u_{x^i} + c(t,x) u = f
\end{equation}
in $L_{q,p}((S,T) \times \bR^d)$, $-\infty \le S < T \le \infty$,
where 
$$
\| u \|_{L_{q,p}((S,T) \times \bR^d)}
= \left( \int_S^T \left( \int_{\bR^d} |u(t,x)|^p \, dx \right)^{q/p} \, dt \right)^{1/q}.
$$

We assume that the coefficients $a^{ij}$, $i \ne 1$ or $j \ne 1$, are only measurable in $(t,x^1) \in \bR^2$ and VMO in $x' \in \bR^{d-1}$,
and the coefficient $a^{11}$ is measurable in $x^1 \in \bR$
and VMO in $(t,x') \in \bR^d$.
This means, for example, that no regularity assumptions on $a^{ij}$
are needed if $a^{ij}$, $i \ne 1$ or $j \ne 1$, are functions of only $(t,x^1)$ and $a^{11}$ is a function of only $x^1 \in \bR$.
As usual, the coefficients $b^{i}(t,x)$ and $c(t,x)$ are assumed to be only measurable and bounded.

Under these assumptions as well as the uniform ellipticity condition on $a^{ij}$,
we prove that, for $q > p \ge 2$
and $f \in L_{q,p}((0,T) \times \bR^d)$,
there exists a unique function $u \in W_{q,p}^{1,2}((0,T) \times \bR^d)$
satisfying $u(T,x) = 0$ and the equation \eqref{para_main_eq},
where, as in the usual Sobolev spaces, $u \in W_{q,p}^{1,2}((0,T) \times \bR^d)$
if $u$, $u_x$, $u_{xx}$, $u_t \in L_{q,p}((0,T) \times \bR^d)$.

As explained in \cite{Krylov_2007_mixed_VMO},
one of advantages of having the $L_{q,p}$-theory for parabolic equations
is that one can improve the regularity of solutions, for example, in $t$
using embedding theorems with a large $q$.
Indeed, one result in this paper is used 
in \cite{Doyoon:parabolic:2006} 
to estimate the H\"{o}lder continuity of a solution to a parabolic equation.
The same type of argument is also used in Lemma \ref{lemma03222007} of this paper.

The same or similar parabolic equations (or systems) 
in $L_{q,p}$-spaces (or {\em Sobolev spaces with mixed norms}) 
with {\em not necessarily continuous coefficients} have been dealt with, 
for example,
in \cite{MR2286441, MR1935919, Krylov_2007_mixed_VMO}.
However, the coefficients $a^{ij}$ there are assumed to be VMO in the spatial variables.
In fact, the coefficients in \cite{MR2286441} are VMO in $x \in \bR^d$, but independent of $t \in \bR$,
whereas the coefficients $a^{ij}$ in \cite{MR1935919}
are measurable functions of only $t$,
and the coefficients $a^{ij}$ in \cite{Krylov_2007_mixed_VMO} are 
measurable in $t$ and VMO in $x$.
Since we assume in this paper that the coefficients $a^{ij}$, except $a^{11}$, are measurable in $(t,x^1) \in \bR^2$ and VMO in $x' \in \bR^{d-1}$,
as far as coefficients $a^{ij}$, $i \ne 1$ or $j \ne 1$, are concerned,
the class of coefficients we consider is bigger than
those previously considered,
but since $a^{11}$ is measurable in $x^1 \in \bR$
but VMO in $(t,x') \in \bR \times \bR^{d-1}$,
we have to say that the class of coefficients $a^{ij}$ in this paper
is {\em different} from those, especially, in \cite{Krylov_2007_mixed_VMO}.
On the other hand, 
this paper is a continuation of the paper \cite{Krylov_2007_mixed_VMO} because the results and methods in that paper enable
us to deal with parabolic equations with the coefficients of this paper in the framework of {\em Sobolev spaces with mixed norms}.

Elliptic and parabolic equations in $L_p$-spaces
(not $L_{q,p}$-spaces) with coefficients as in this paper
are investigated in \cite{Doyoon&Krylov:article_meas:2005, Doyoon&Krylov:parabolic:2006,
Doyoon:parabolic:2006},
where one can find references about equations
with discontinuous coefficients as well as the non-solvability 
of equations with general measurable coefficients.
In addition to references stated in \cite{Doyoon&Krylov:article_meas:2005, Doyoon&Krylov:parabolic:2006,
Doyoon:parabolic:2006},
we refer the reader to papers \cite{MR578364, MR716741, MR1849379, MR1947468}
for examples of differential equations which do not have unique solutions in Sobolev spaces.
One can find in \cite{Softova2006} quasi-linear parabolic equations in mixed norms.
For more references about parabolic (or elliptic) equations in $L_p$ or $L_{q,p}$,
see \cite{MR2286441, Krylov_2007_mixed_VMO} and references therein.

This paper consist of two parts. In the first part we solve 
the equation \eqref{para_main_eq} in Sobolev spaces with mixed norms when the coefficients $a^{ij}$
are measurable in $x^1 \in \bR$ and VMO in $(t,x') \in \bR \times \bR^{d-1}$.
This result serves as one of main steps in \cite{Doyoon:parabolic:2006}.
Then using the results in \cite{Doyoon:parabolic:2006}
as well as in the first part of this paper,
we prove the main result of this paper.
The first part consists of section \ref{section04082007} and
\ref{section04102007};
the second part consists of section \ref{section04082007_01}
and \ref{section04092007}.
In section \ref{main_sec} we states the assumptions and the main result.

A few words about notation:
We denote by $(t,x)$ a point in $\bR^{d+1}$, i.e.,
$(t,x) = (t,x^1,x') \in \bR \times \bR^{d} = \bR^{d+1}$,
where $t \in \bR$, $x^1 \in \bR$, $x' \in \bR^{d-1}$,
and $x = (x^1,x') \in \bR^d$.
By $u_{x'}$ we mean, depending on the context,
one of $u_{x^j}$, $i = 2, \cdots,d$, or the whole collection
$\{ u_{x^2}, \cdots, u_{x^d} \}$.
As usual, $u_{x}$ represents one of $u_{x^i}$, $i = 1, \cdots, d$,
or the whole collection of $\{ u_{x^1}, \cdots, u_{x^d} \}$.
Thus $u_{xx'}$ is one of $u_{x^ix^j}$,
where $i \in \{1, \cdots, d\}$ and $j \in \{2, \cdots, d\}$,
or the collection of them.
For a function $u(t,x)$ defined on $\bR^{d+1}$ (or a subset in $\bR^{d+1}$), 
the average of $u$ over an open set $\cD \subset \bR^{d+1}$
is denoted by $\left(u\right)_{\cD}$, i.e.,
$$
\left(u\right)_{\cD}
= \frac{1}{|\cD|} \int_{\cD} u(t,x) \, dx \, dt
= \dashint_{\cD} u(t,x) \, dx \, dt,
$$
where $|\cD|$ is the $d+1$-dimensional volume of $\cD$.
Finally, various constants are denoted by $N$, 
their values may vary from one occurrence to another. We write $N(d, \delta, \dots)$ if $N$ depends only on $d$, $\delta$, $\dots$.

\vspace{1em}

\noindent{\sc Acknowledgement}: I would like to thank Nicolai V. Krylov for giving me an opportunity to learn and use his results in \cite{Krylov_2007_mixed_VMO}
before its publication.

\section{Main result}\label{main_sec}

We consider the parabolic equation \eqref{para_main_eq} 
with coefficients $a^{ij}$, $b^{i}$, and $c$ satisfying the following assumption.

\begin{assumption}\label{assum_01}
The coefficients $a^{ij}$, $b^{i}$, and $c$ are measurable functions defined on $\bR^{d+1}$, $a^{ij} = a^{ji}$.
There exist positive constants $\delta \in (0,1)$ and $K$ such that
$$
|b^{i}(t, x)| \le K, \quad |c(t, x)| \le K,
$$
$$
\delta |\vartheta|^2 \le \sum_{i,j=1}^{d} a^{ij}(t,x) \vartheta^i \vartheta^j
\le \delta^{-1} |\vartheta|^2
$$
for any $(t,x) \in \bR^{d+1}$ and $\vartheta \in \bR^d$.
\end{assumption}

Another assumption on the coefficients $a^{ij}$ is
that they are, in case $p \in (2, \infty)$, 
measurable in $(t,x^1) \in \bR^2$ and VMO in $x' \in \bR^{d-1}$
(the coefficient $a^{11}$ is measurable in $x^1 \in \bR$
and VMO in $(t,x') \in \bR^d$).
In case $p = 2$, the coefficients $a^{ij}$ are measurable functions of only $(t,x^1) \in \bR^2$,
but $a^{11}(t,x^1)$ is VMO in $t \in \bR$.
To state this assumption precisely, we introduce the following notation.
Let 
$$
B_r(x) = \{ y \in \bR^d: |x- y| < r \},
$$
$$
B'_r(x') = \{ y' \in \bR^{d-1}: |x'- y'| < r \},
$$
$$
Q_r(t,x) = (t, t+r^2) \times B_r(x),
\quad
\Gamma_r(t,x') = (t, t+r^2) \times B'_r(x'),
$$
$$
\Lambda_{r}(t,x)= (t, t+r^2) \times (x^1-r, x^1+r) \times B'_r(x').
$$
Set $B_r = B_r(0)$, $B'_r = B'_r(0)$, $Q_r = Q_r(0)$ and so on. 
By $|B'_r|$ we mean  the $d-1$-dimensional volume of $B'_r(0)$.
Denote 
$$
\text{osc}_{x'} \left( a^{ij}, \Lambda_r(t,x) \right) 
= r^{-3} |B'_r|^{-2} \int_{t}^{t+r^2}\int_{x^1-r}^{x^1+r} A^{ij}_{x'}(s,\tau)  \, d\tau \,  ds, 
$$
$$
\text{osc}_{(t,x')} \left( a^{ij}, \Lambda_r(t,x) \right) 
= r^{-5} |B'_r|^{-2}  \int_{x^1-r}^{x^1+r}  A^{ij}_{(t,x')}(\tau) \, d\tau, 
$$
where
$$
A^{ij}_{x'}(s,\tau)
= \int_{y', z' \in B'_r(x')}
|a^{ij}(s, \tau, y') - a^{ij}(s, \tau, z') | \, dy' \, dz',
$$
$$
A^{ij}_{(t,x')}(\tau)
= \int_{ (\sigma,y'), (\varrho,z') \in \Gamma_r(t,x')}
|a^{ij}(\sigma, \tau, y') - a^{ij}(\varrho, \tau, z') | \, dy' \, dz' \, d \sigma \, d \varrho.
$$
Also denote
$$
\cO_R^{\, x'}(a^{ij}) = \sup_{(t,x) \in \bR^{d+1}} \sup_{r \le R} \,\,\, \text{osc}_{x'} \left( a^{ij}, \Lambda_r(t,x) \right),
$$
$$
\cO_R^{(t,x')}(a^{ij}) = \sup_{(t,x) \in \bR^{d+1}} \sup_{r \le R} \,\,\, \text{osc}_{(t,x')} \left( a^{ij}, \Lambda_r(t,x) \right).
$$
Finally set
$$
a_R^{\#} =\cO_R^{(t,x')}(a^{11}) + \sum_{i \ne 1 \, \text{or} \, j \ne 1} \cO_R^{\, x'}(a^{ij}).
$$

\begin{assumption}\label{assum_02}
There is a continuous function $\omega(t)$ defined on $[0,\infty)$ such that $\omega(0) = 0$ and $a_R^{\#} \le \omega(R)$ for all $R \in [0,\infty)$.
\end{assumption}

By $W_{q,p}^{1,2}((S,T) \times \bR^{d})$
we mean the collection of all functions defined on $(S,T) \times \bR^{d}$,
$-\infty \le S < T \le \infty$,
such that 
$$
\| u \|_{W_{q,p}^{1,2}((S,T) \times \bR^{d})}
:= \|u \|_{L_{q,p}((S,T) \times \bR^{d})} 
+ \| u_{x} \|_{L_{q,p}((S,T) \times \bR^{d})} 
$$
$$
+ \| u_{x x} \|_{L_{q,p}((S,T) \times \bR^{d})}
+ \| u_t \|_{L_{q,p}((S,T) \times \bR^{d})} < \infty.
$$
We say $u \in \WO{1,2}{q,p}((S,T) \times \bR^d)$
if $u \in  W_{q,p}^{1,2}((S,T) \times \bR^d)$
and $u(T,x) = 0$.
Throughout the paper, we set
$$
L_{q,p} := L_{q,p} (\bR \times \bR^d),
\quad
W_{q,p}^{1,2} : = W_{q,p}^{1,2}(\bR \times \bR^{d}).
$$
As usual, 
$$
L_p((S, T) \times \bR^d)
= L_{p,p}((S, T) \times \bR^d),
$$
$$
W_p^{1,2}((S,T) \times \bR^d)
= W_{p,p}^{1,2}((S,T) \times \bR^d).
$$

We denote the differential operator by $L$, that is,
$$
Lu = u_t + a^{ij} u_{x^i x^j} + b^{i} u_{x^i} + c u.
$$
The following is our main result.

\begin{theorem} \label{theorem03192007}
Let $q > p \ge 2$, $0 < T < \infty$,
and the coefficients of $L$ satisfy
Assumption \ref{assum_01} and \ref{assum_02}.
In addition, 
we assume that, in case $p=2$,
$a^{ij}$ are independent of $x' \in \bR^{d-1}$.
Then for any $f \in L_{q,p}((0,T) \times \bR^d)$, 
there exists a unique $u \in \WO{1,2}{q,p}((0,T) \times \bR^d)$ such that 
$Lu = f$ in $(0,T) \times \bR^d$.
Furthermore, there is a constant $N$, depending only on $d$, $\delta$, $K$, $p$, $q$, $T$, and $\omega$,
such that, for any $u \in \WO{1,2}{q,p}((0,T) \times \bR^d)$,
$$
\| u \|_{W_{q,p}^{1,2}((0,T) \times \bR^d)}
\le N \| Lu \|_{L_{q,p}((0,T) \times \bR^d)}.
$$
\end{theorem}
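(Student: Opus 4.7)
The plan is the standard two-step scheme: establish the a priori estimate first, then deduce existence via the method of continuity, connecting $L$ to the constant-coefficient backward heat operator (uniform ellipticity of $a^{ij}$ keeps the convex combination elliptic, and $L_{q,p}$-solvability for the heat equation is classical). Uniqueness follows by applying the a priori estimate with $f = 0$.

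The bulk of the work is the a priori estimate, which I would establish in two stages mirroring the two-part structure announced in the introduction. In the first stage I prove the estimate under the stronger hypothesis that \emph{every} $a^{ij}$ is measurable only in $x^1$ and VMO in $(t,x')$ — i.e., the same regularity profile as $a^{11}$. The strategy is Krylov's sharp-function / maximal-function method adapted to mixed norms: freeze the coefficients on a cylinder $\Lambda_r$ to be functions of $x^1$ alone (using VMO smallness in $(t,x')$), derive mean-oscillation estimates for $u_{xx}$ against the frozen equation, and conclude via the Fefferman-Stein sharp-function theorem in mixed $L_{q,p}$-norms proved in \cite{Krylov_2007_mixed_VMO}. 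The frozen equation, whose principal part depends only on $x^1$, is itself tractable, since transverse Fourier analysis reduces the mean-oscillation estimates to a one-parameter family of essentially one-dimensional problems.

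In the second stage I would upgrade to the actual hypothesis, where $a^{ij}$ with $(i,j) \ne (1,1)$ are merely measurable in $(t,x^1)$ (not VMO in $t$). Here the idea, flagged by the reference to Lemma \ref{lemma03222007}, is to combine the stage-one mixed-norm estimate with the pure $L_p$-theory of \cite{Doyoon:parabolic:2006}, which already covers the full coefficient class but only in unmixed $L_p$. Concretely: (i) use \cite{Doyoon:parabolic:2006} to obtain $u \in W_p^{1,2}$ together with an $L_p$-estimate, (ii) invoke a parabolic embedding of Sobolev type — available precisely because $q > p$ — to extract extra H\"older-in-$t$ regularity of $u$ and its first derivatives, and (iii) feed this information back, handling the $a^{11}$ piece via stage one and absorbing the terms created by the merely-$(t,x^1)$-measurable coefficients using the gained time regularity. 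The extra hypothesis in the $p=2$ case (that $a^{ij}$ be independent of $x'$) reflects that no mixed-norm gain is available at the endpoint and forces a direct reduction instead.

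The main obstacle will be executing stage two cleanly. The two regularity classes are genuinely incompatible with a single freezing recipe: one cannot simply perturb stage one, because the difference between a coefficient measurable in $(t,x^1)$ and one measurable in $x^1$ alone is not small in any VMO sense. The gap has to be bridged by the interplay of three ingredients — the $L_p$-theory of \cite{Doyoon:parabolic:2006}, the stage-one $L_{q,p}$ estimate, and the mixed-norm Hardy-Littlewood / Fefferman-Stein machinery of \cite{Krylov_2007_mixed_VMO} — and it is precisely the strict inequality $q > p$ that makes the intermediate embedding step go through.
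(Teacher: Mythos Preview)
Your two-stage architecture matches the paper's, and your stage-one sketch is broadly right (though the paper differentiates freely in $x'$ rather than using transverse Fourier analysis). The gap is in the mechanism of stage two. Your pipeline (i)--(iii) proposes to take a global $L_p$-estimate from \cite{Doyoon:parabolic:2006}, embed to gain H\"older-in-$t$ regularity of $u$ and $u_x$, and then ``absorb the terms created by the merely-$(t,x^1)$-measurable coefficients using the gained time regularity.'' This does not work as stated: a global $W_p^{1,2}$ bound does not give H\"older control (the embedding goes the wrong direction for that purpose), and even if it did, time regularity of $u$ or $u_x$ does not help you bound $\|a^{ij}u_{x^ix^j}\|_{L_{q,p}}$ when $a^{ij}$ is merely measurable in $(t,x^1)$.

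What the paper does instead is \emph{rerun the entire mean-oscillation / sharp-function machinery} for the operator with $(t,x^1)$-measurable coefficients, and the embedding enters \emph{locally}, inside that machinery. Because $a^{ij}$ now depend on $t$, one cannot differentiate the homogeneous equation in $t$, so the sup-norm bounds on $v_{tt},v_{tx}$ from stage one are lost. Lemma~\ref{lemma03222007} replaces them by a H\"older seminorm $[v_{xx'}]_{\mu,\nu,Q_1}$, obtained by viewing $v_t + a^{11}(x^1)v_{x^1x^1} = g$ as a one-spatial-dimensional problem, applying the stage-one $L_{q,p}$ result (Theorem~\ref{theorem04022007}) to it, and \emph{then} embedding $W_{q,p}^{1,2}$ into a H\"older class. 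This yields a mean-oscillation estimate for $u_{xx'}$ only (not $u_t$ or $u_{x^1x^1}$), hence a bound on $\|u_{xx'}\|_{L_{q,p}}$ with an absorbable $\|u_{xx}\|_{L_{q,p}}$ on the right. The missing derivative $u_{x^1x^1}$ is then recovered by applying stage one to the auxiliary operator $L_1 u = u_t + a^{11}u_{x^1x^1} + \Delta_{d-1}u$: the terms moved to the right-hand side involve only $u_{xx'}$, because the coefficients $a^{ij}$ with $(i,j)\ne(1,1)$ never multiply $u_{x^1x^1}$. That structural observation --- bad coefficients hit only mixed derivatives --- is what actually closes the loop, not time regularity.
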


\section{Equations with $a^{ij}$ measurable in $x^1 \in \bR$ and {VMO} in $(t,x') \in \bR^d$}								\label{section04082007}

In this section
we suppose that the coefficients $a^{ij}$ are
measurable in $x^1 \in \bR$ and VMO in $(t,x') \in \bR^{d}$.
To state this assumption, set
$$
a_R^{\#(t,x')} = \sum_{i, j = 1}^d \cO_R^{(t,x')}(a^{ij}).
$$

\begin{assumption}\label{assum_03}
There is a continuous function $\omega(t)$ defined on $[0,\infty)$ such that $\omega(0) = 0$ and $a_R^{\#(t,x')} \le \omega(R)$ for all $R \in [0,\infty)$.
\end{assumption}

The following is the main result of this section.

\begin{theorem}							\label{theorem04022007}
Let $q > p \ge 2$, $0 < T < \infty$,
and the coefficients of $L$ satisfy
Assumption \ref{assum_01} and \ref{assum_03}.
In addition, 
we assume that, in case $p=2$,
$a^{ij}$ are independent of $x' \in \bR^{d-1}$.
Then for any $f \in L_{q,p}((0,T) \times \bR^d)$, 
there exists a unique $u \in \WO{1,2}{q,p}((0,T) \times \bR^d)$ such that 
$Lu = f$ in $(0,T) \times \bR^d$.
Furthermore, there is a constant $N$, depending only on $d$, $\delta$, $K$, $p$, $q$, $T$, and $\omega$,
such that, for any $u \in \WO{1,2}{q,p}((0,T) \times \bR^d)$,
$$
\| u \|_{W_{q,p}^{1,2}((0,T) \times \bR^d)}
\le N \| Lu \|_{L_{q,p}((0,T) \times \bR^d)}.
$$
\end{theorem}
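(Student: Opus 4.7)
The plan is to follow the standard Krylov-type scheme for parabolic equations with partially VMO coefficients, adapting the mixed-norm machinery of \cite{Krylov_2007_mixed_VMO} with the $L_p$-theory of \cite{Doyoon&Krylov:parabolic:2006} supplying the model problem.

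\textbf{Reduction to an a priori estimate.} First I would apply the method of continuity connecting $L$ to the heat operator $\partial_t + \Delta$, whose $W^{1,2}_{q,p}$-solvability is contained in \cite{Krylov_2007_mixed_VMO}. Thus the whole theorem reduces to the uniform a priori bound $\|u\|_{W^{1,2}_{q,p}} \le N\|Lu\|_{L_{q,p}}$ for $u \in \WO{1,2}{q,p}((0,T)\times\bR^d)$. A density argument lets me assume $u \in C^{\infty}_c$, and a standard interpolation plus Gr\"onwall trick (using $|b^i|,|c|\le K$) permits reducing to the principal part, i.e.\ $b^i = c = 0$. The estimate of $u_t$ then follows from the equation once $\|u_{xx}\|_{L_{q,p}}$ is controlled, so the essential task is bounding $\|u_{xx}\|_{L_{q,p}}$ by $\|Lu\|_{L_{q,p}}$ plus lower order terms.

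\textbf{Mean oscillation estimate on $\Lambda_r$.} The core step is to produce, for some $p_0$ with $2\le p_0 < p$ and any cylinder $\Lambda_r(t_0,x_0)$, a pointwise inequality of the form
$$
\dashint_{\Lambda_r} \bigl|u_{xx} - (u_{xx})_{\Lambda_r}\bigr|^{p_0}\,dx\,dt
\le N\kappa^{\beta}\bigl(|u_{xx}|^{p_0}\bigr)_{\Lambda_{\kappa r}}
+ N\kappa^{\gamma}\bigl(|Lu|^{p_0}\bigr)_{\Lambda_{\kappa r}}
$$
for all $\kappa\ge 2$ and some $\beta>0$. To get this I would freeze the $(t,x')$-dependence of the coefficients by setting $\bar a^{ij}(x^1)$ equal to the average of $a^{ij}(\cdot,x^1,\cdot)$ over $\Gamma_{\kappa r}(t_0,x_0')$, split $Lu = \bar L u + (L-\bar L)u$, and cut off to obtain a problem on $\Lambda_{\kappa r}$ with coefficients depending only on $x^1$. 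The $L_{p_0}$-solvability from \cite{Doyoon&Krylov:parabolic:2006} applied to $\bar L$, combined with interior H\"older/Campanato regularity of solutions of the homogeneous equation with coefficients $\bar a^{ij}(x^1)$ (which holds because after partial Fourier transform in $x'$ the equation reduces to a $2$-dimensional parabolic equation with measurable coefficients), yields the factor $\kappa^\beta$. Assumption \ref{assum_03} feeds in here via $\|a^{ij}-\bar a^{ij}\|_{L_\infty\text{-like}}\lesssim \omega(\kappa r)$, and the perturbation term $(L-\bar L)u_{xx}$ is absorbed once $R$ is chosen small relative to $\omega$.

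\textbf{From oscillation to $L_{q,p}$.} With the above oscillation bound in hand, I would apply the Fefferman--Stein sharp-function inequality and the Hardy--Littlewood maximal-function theorem in the mixed-norm spaces $L_{q,p}$, both established by Krylov in \cite{Krylov_2007_mixed_VMO}, with the parabolic cylinders $\Lambda_r$ as the underlying filtration. The choice $p_0 < p$ guarantees that the maximal functions of $|u_{xx}|^{p_0}$ and $|Lu|^{p_0}$ are bounded in $L_{q/p_0,p/p_0}$. Choosing $\kappa$ large first and then $R$ small (so that $N\omega(R)$ is absorbed) yields the global bound $\|u_{xx}\|_{L_{q,p}}\le N\|Lu\|_{L_{q,p}}$ on $\bR\times\bR^d$; localizing in time via a standard cutoff with a Gr\"onwall-in-$T$ argument delivers the estimate on $(0,T)\times\bR^d$.

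\textbf{Main obstacle and the case $p=2$.} The principal difficulty is that the Fefferman--Stein step requires $p_0 < p$ in \emph{both} integration variables, which is not available when $p=2$. This is exactly why the theorem assumes in that case that $a^{ij}$ are independent of $x'$: then the model problem \emph{is} the full problem, and I would bypass the oscillation/sharp-function machinery altogether by applying the $L_2$-in-$x$ theory (via Plancherel in $x'$ and the one-dimensional $(t,x^1)$ result of \cite{Doyoon&Krylov:parabolic:2006}) and then invoking only the simpler fact that the associated solution operator is bounded on $L_q((0,T);L_2(\bR^d))$. The most delicate technical work, and what I expect to be the real obstacle, lies in Step 2: verifying the interior H\"older regularity and the pointwise oscillation inequality for solutions of $\bar L u = 0$ with $\bar a^{ij}(x^1)$ merely measurable, since these are not immediate from classical Krylov--Safonov theory in the non-divergence setting.
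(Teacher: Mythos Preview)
Your overall architecture is right in spirit, but Step~2 contains a genuine gap: you cannot obtain an oscillation estimate for \emph{all} of $u_{xx}$ with coefficients $\bar a^{ij}(x^1)$ merely measurable in $x^1$. Interior H\"older (or even $L_\infty$) regularity of $u_{x^1x^1}$ for solutions of $\bar L v=0$ simply fails in this generality; the partial Fourier transform in $x'$ does not help, since after the transform the second-order coefficient in $x^1$ is still the bare measurable function $a^{11}(x^1)$. What \emph{is} available (and what the paper proves in Lemmas~\ref{lemma04012007}--\ref{lemma04012007_04} and Theorem~\ref{theorem04022007_01}) is pointwise control only of $v_{tt}$, $v_{tx}$, $v_{txx'}$, $v_{xxx'}$, hence an oscillation bound only for $u_t$ and $u_{xx'}$, \emph{not} for $u_{x^1x^1}$. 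The paper then recovers $u_{x^1x^1}$ algebraically from the equation, $u_{x^1x^1}=(a^{11})^{-1}\bigl(L_0u-u_t-\sum_{(i,j)\ne(1,1)}a^{ij}u_{x^ix^j}\bigr)$, but this is done \emph{after} passing to $L_{q,p}$-norms (see the proof of Corollary~\ref{corollary04102007}), not at the level of pointwise oscillations; the oscillation of $a^{ij}$ in $x^1$ is uncontrolled, so doing it pointwise would not close.

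There is also a structural difference in how the sharp-function step is executed. You propose to run Fefferman--Stein and Hardy--Littlewood directly in the mixed-norm space $L_{q,p}$ with a lower exponent $p_0<p$ on parabolic cylinders. The paper instead keeps the spatial exponent equal to $p$, integrates the oscillation inequality over $x\in\bR^d$ to produce scalar functions $\phi(t)=\|u_t(t,\cdot)\|_{L_p}$, $\varphi(t)=\|u_{xx'}(t,\cdot)\|_{L_p}$, $\zeta(t)=\|u_{xx}(t,\cdot)\|_{L_p}$, $\psi(t)=\|L_0u(t,\cdot)\|_{L_p}$ (Corollary~\ref{corollary04092007}), and then applies the \emph{one-dimensional} Fefferman--Stein and maximal theorems in $t$ alone, using only $q/p>1$ (Lemma~\ref{lemma04102007}, Corollary~\ref{corollary04102007}). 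This avoids needing any $p_0<p$ and in particular makes the case $p=2$ go through the same machinery: the extra hypothesis that $a^{ij}$ are independent of $x'$ when $p=2$ is used not to bypass the sharp-function argument, but to make the $L_p$-theory of \cite{Doyoon&Krylov:parabolic:2006} and Lemma~\ref{lemma04022007} applicable at $p=2$. Your proposed separate treatment of $p=2$ via Plancherel is therefore unnecessary once the argument is organized this way.
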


This theorem is proved in the next section after presenting some preliminary results.
Throughout this section, we set
$$
\cL_{\lambda} u = u_t + a^{ij}(x^1) u_{x^i x^j} - \lambda u,
$$
where $\lambda \ge 0$ and $a^{ij}$ are measurable functions of only $x^1 \in \bR$ satisfying
Assumption \ref{assum_01}.

We start with a theorem which can be derived from results in \cite{Doyoon&Krylov:parabolic:2006}.

\begin{theorem}							\label{theorem04012007}
Let $p \ge 2$ and $T \in [-\infty,\infty)$.
For any $\lambda>0$ and $f\in L_{p}((T,\infty) \times \bR^{d})$,
there exists a unique solution $u \in W^{1,2}_{p}((T,\infty) \times \bR^{d})$ 
to the equation
$\cL_{\lambda}u = f$. 
Furthermore, there is a constant $N=N(d, p, \delta)$
such that, for any $\lambda \ge 0$ and
$u \in W^{1,2}_{p}((T,\infty) \times \bR^{d})$,
we have
$$
\|u_{t}\|_{L_{p}((T,\infty) \times \bR^{d})}
+\|u_{xx}\|_{L_{p}((T,\infty) \times \bR^{d})}
+\sqrt{\lambda}\|u_{x}\|_{L_{p}((T,\infty) \times \bR^{d})}
$$
$$
+\lambda\|u\|_{L_{p}((T,\infty) \times \bR^{d})}
\le N\|\cL_{\lambda} u\|_{L_{p}((T,\infty) \times \bR^{d})}.
$$
\end{theorem}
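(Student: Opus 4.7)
The theorem is a direct specialization of results from \cite{Doyoon&Krylov:parabolic:2006}. The key observation is that coefficients $a^{ij}(x^1)$ depending only on $x^1$ trivially satisfy the hypothesis ``measurable in $x^1 \in \bR$ and VMO in $(t,x') \in \bR^d$'' used in that reference, since functions constant in $(t,x')$ are VMO in $(t,x')$ with modulus identically zero. Thus the $L_p$-solvability results of \cite{Doyoon&Krylov:parabolic:2006} apply to $\cL_\lambda$ with $b^i \equiv 0$, $c \equiv 0$, and the $-\lambda u$ term absorbed as the usual spectral shift.

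\textbf{Carrying out the proof.} First, for $\lambda > 0$ and $f \in L_p((T,\infty) \times \bR^d)$, I would invoke the solvability statement of \cite{Doyoon&Krylov:parabolic:2006} to obtain a unique $u \in W^{1,2}_p((T,\infty) \times \bR^d)$ with $\cL_\lambda u = f$. The half-line $(T,\infty)$ fits the usual parabolic Cauchy framework (free at $t = T$); the case $T = -\infty$ follows by an exhaustion argument, solving on $(T_n, \infty)$, $T_n \to -\infty$, and passing to the limit using the uniform a priori bound supplied below. Next, for the estimate, I would invoke the a priori bound from \cite{Doyoon&Krylov:parabolic:2006}, which in the absence of lower-order terms reads
$$
\|u_t\|_{L_p} + \|u_{xx}\|_{L_p} + \lambda\|u\|_{L_p} \le N\|\cL_\lambda u\|_{L_p}
$$
uniformly in $\lambda \ge 0$, with $N = N(d,p,\delta)$. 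The value $\lambda = 0$ is allowed because, absent lower-order terms, no large-$\lambda$ threshold is needed: the estimate is homogeneous with respect to the parabolic scaling $(t,x) \mapsto (c^2 t, c x)$, which also sends $\lambda$ to $\lambda/c^2$, so the bound valid for some $\lambda_0 > 0$ propagates to all $\lambda \ge 0$. The intermediate term $\sqrt{\lambda}\|u_x\|_{L_p}$ is recovered from the standard interpolation $\|u_x\|_{L_p}^2 \le N \|u\|_{L_p} \|u_{xx}\|_{L_p}$, which together with the bounds on $\lambda\|u\|_{L_p}$ and $\|u_{xx}\|_{L_p}$ gives $\lambda \|u_x\|_{L_p}^2 \le N \|\cL_\lambda u\|_{L_p}^2$.

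\textbf{Main obstacle.} There is essentially no new mathematical difficulty: the entire weight of the argument is carried by \cite{Doyoon&Krylov:parabolic:2006}. The only point that warrants some care is a bookkeeping check of conventions in the reference, namely (i) that the a priori estimate is available on the half-space $(T,\infty) \times \bR^d$ with $T \in [-\infty,\infty)$, and (ii) that the constant $N$ depends only on $d$, $p$, $\delta$ when the lower-order coefficients vanish. Both are easily verified by inspection of the statements in that paper.
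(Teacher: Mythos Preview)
Your approach is essentially the paper's: both defer entirely to \cite{Doyoon&Krylov:parabolic:2006} and both use the parabolic dilation $(t,x)\mapsto(c^2t,cx)$ to push the threshold $\lambda_0$ down to $0$. The one discrepancy is the direction of the reduction between finite $T$ and $T=-\infty$. The results in \cite{Doyoon&Krylov:parabolic:2006} are stated on the whole space $\bR^{d+1}$, so the paper takes $T=-\infty$ as the base case and then obtains finite $T$ from it (via the extension argument following Corollary~5.14 in \cite{Krylov_2007_mixed_VMO}); you do the opposite, treating the half-line as directly available and recovering $T=-\infty$ by exhaustion. Your exhaustion argument is fine for the a~priori estimate, but your claim that the half-line case ``fits the usual parabolic Cauchy framework (free at $t=T$)'' is slightly off: the operator $\partial_t + a^{ij}\partial_{ij}$ is backward parabolic, so $t=T$ is not the initial time, and in any event the theorem asserts existence and uniqueness in $W^{1,2}_p((T,\infty)\times\bR^d)$ \emph{without} any initial condition---uniqueness comes from the a~priori bound, and existence comes by extending $f$ by zero to $t<T$, solving on $\bR^{d+1}$, and restricting. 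With that correction your outline matches the paper's. The interpolation step you add for $\sqrt{\lambda}\,\|u_x\|_{L_p}$ is correct but unnecessary, since that term already appears in the estimates of \cite{Doyoon&Krylov:parabolic:2006}.
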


If $T = -\infty$,
this theorem is obtained 
from Theorem 3.2 in \cite{Doyoon&Krylov:parabolic:2006} for $p = 2$
and Lemma 5.3\footnote{In fact, Lemma 5.3 in \cite{Doyoon&Krylov:parabolic:2006}
says that the estimate in Theorem \ref{theorem04012007}
holds for all $\lambda \ge \lambda_0$, where $\lambda_0 \ge 0$,
that is, $\lambda_0$ may not be $0$.
However, since the coefficients $a^{ij}$ of $\cL_{\lambda}$
are measurable functions of only $x^1 \in \bR$
and $b^i = c = 0$,
it can be proved, using a dilation argument,
that $\lambda_0 = 0$ in our case.}
in \cite{Doyoon&Krylov:parabolic:2006} for $p > 2$.
For the case $T \in (-\infty, \infty)$, we use the case $T = -\infty$ and the argument following Corollary 5.14 in \cite{Krylov_2007_mixed_VMO}.

The following three lemmas are 
$L_p$-versions of Lemma 4.2, 4.3, and 4.4 in \cite{Doyoon&Krylov:parabolic:2006}.
Since the estimate in Theorem \ref{theorem04012007}
is available, their proofs can be done
by repeating the proofs of
Lemma 4.2, 4.3, and 4.4 in \cite{Doyoon&Krylov:parabolic:2006}
with $p$ in place of $2$.

\begin{lemma} \label{lemma04012007_01}
Let $p \in [2, \infty)$.
For any $u \in W_{p, \text{loc}}^{1,2}(\bR^{d+1})$,
we have
$$
\|u_t\|_{L_p(Q_r)}
+ \| u_{xx} \|_{L_p(Q_r)}
+ \| u_{x} \|_{L_p(Q_r)}
\le N \left( \| \cL_0 u \|_{L_p(Q_R)} + \| u \|_{L_p(Q_R)} \right),
$$
where $0 < r < R < \infty$ and $N = N(d, p, \delta, r, R)$.
\end{lemma}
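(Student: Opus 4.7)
The plan is to reduce this interior estimate to the global estimate of Theorem \ref{theorem04012007} by a standard cutoff argument, then to absorb the resulting lower-order terms via an iteration on nested cylinders. This is exactly the route taken in the $p=2$ proof of Lemma 4.2 in \cite{Doyoon&Krylov:parabolic:2006}, and the point of the lemma is that, once Theorem \ref{theorem04012007} is available, nothing else in that argument is specific to the exponent $2$.

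Concretely, I would fix $r \le \rho_1 < \rho_2 \le R$, pick a smooth cutoff $\eta$ with $\eta \equiv 1$ on $Q_{\rho_1}$, $\eta \equiv 0$ outside $Q_{\rho_2}$, and
$$
|\eta_t| + |\eta_{xx}| + |\eta_x|^2 \le \frac{N}{(\rho_2-\rho_1)^2},
$$
and set $v = \eta u$, extended by zero to $\bR^{d+1}$. A direct computation gives
$$
\cL_{\lambda} v = \eta\, \cL_0 u - \lambda \eta u + \eta_t u + 2 a^{ij} \eta_{x^i} u_{x^j} + a^{ij} \eta_{x^i x^j} u,
$$
so applying Theorem \ref{theorem04012007} (with $T=-\infty$ and, say, $\lambda = 1$) to $v \in W_p^{1,2}(\bR^{d+1})$ yields a bound on $\|v_t\|_{L_p} + \|v_{xx}\|_{L_p} + \|v_x\|_{L_p}$ by the $L_p$-norm of the right-hand side above. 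Since $\eta \equiv 1$ on $Q_{\rho_1}$, the left-hand side dominates $\|u_t\|_{L_p(Q_{\rho_1})} + \|u_{xx}\|_{L_p(Q_{\rho_1})} + \|u_x\|_{L_p(Q_{\rho_1})}$, while the right-hand side is bounded by
$$
N\Bigl(\|\cL_0 u\|_{L_p(Q_R)} + (\rho_2-\rho_1)^{-1}\|u_x\|_{L_p(Q_{\rho_2})} + (\rho_2-\rho_1)^{-2}\|u\|_{L_p(Q_R)}\Bigr).
$$

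The main obstacle is the presence of $\|u_x\|_{L_p(Q_{\rho_2})}$ on the right-hand side, on a strictly larger cylinder than the one controlled on the left; without a device to remove it, the estimate cannot close. I would handle it by the standard nested-cylinder iteration: define
$$
\Phi(\rho) := \|u_t\|_{L_p(Q_\rho)} + \|u_{xx}\|_{L_p(Q_\rho)} + \|u_x\|_{L_p(Q_\rho)}, \quad r \le \rho \le R,
$$
and use Cauchy's inequality on the term $(\rho_2-\rho_1)^{-1}\|u_x\|_{L_p(Q_{\rho_2})}$ to rewrite the above bound as
$$
\Phi(\rho_1) \le \tfrac{1}{2}\Phi(\rho_2) + \frac{N}{(\rho_2-\rho_1)^2}\bigl(\|\cL_0 u\|_{L_p(Q_R)} + \|u\|_{L_p(Q_R)}\bigr).
$$
A standard iteration lemma (of the Giaquinta--Giusti type, applicable to any bounded nonnegative increasing function satisfying such a self-improving inequality with coefficient $\tfrac12 < 1$) then yields $\Phi(r) \le N(\|\cL_0 u\|_{L_p(Q_R)} + \|u\|_{L_p(Q_R)})$ with $N$ depending on $d,p,\delta,r,R$, which is the desired bound.

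The only conceptual ingredient beyond calculus is Theorem \ref{theorem04012007}; everything else is a literal transcription of the $p=2$ proof in \cite{Doyoon&Krylov:parabolic:2006}. I expect no additional difficulty, since the VMO assumption plays no role here (the coefficients $a^{ij}(x^1)$ are merely measurable in $x^1$, which is all Theorem \ref{theorem04012007} requires).
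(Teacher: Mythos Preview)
Your proposal is correct and matches the paper's approach exactly: the paper does not give a standalone proof but simply states that the argument is obtained by repeating the $p=2$ proof of Lemma~4.2 in \cite{Doyoon&Krylov:parabolic:2006} with $p$ in place of $2$, using Theorem~\ref{theorem04012007} as the global input---precisely the cutoff-plus-iteration scheme you outline. One small clarification: the passage from $N(\rho_2-\rho_1)^{-1}\|u_x\|_{L_p(Q_{\rho_2})}$ to $\tfrac12\Phi(\rho_2)+N(\rho_2-\rho_1)^{-2}\|u\|_{L_p(Q_R)}$ uses the interpolation inequality $\|u_x\|_{L_p}\le \epsilon\|u_{xx}\|_{L_p}+N\epsilon^{-1}\|u\|_{L_p}$ (with $\epsilon\sim\rho_2-\rho_1$) rather than Cauchy's inequality alone, but this is routine and does not affect the argument.
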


\begin{lemma} \label{lemma04012007_02}
Let $p \in [2, \infty)$, $0 < r < R < \infty$, and
$\gamma = (\gamma^1, \cdots, \gamma^d)$ be a multi-index such that $\gamma^1 = 0, 1, 2$. 
If $v \in C_{\text{loc}}^{\infty}(\bR^{d+1})$ is a function 
such that $\cL_0 v = 0$ in $Q_R$, then 
$$
\int_{Q_r} | D_t^{m}D_{x}^{\gamma} v |^p \, dx \, dt
\le N 
\int_{Q_R} |v|^p \, dx \, dt,
$$
where $m$ is a nonnegative integer and $N = N(d, p, \delta, \gamma, m, r, R)$.
\end{lemma}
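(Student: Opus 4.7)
The plan is to exploit the fact that the coefficients $a^{ij}(x^1)$ depend only on $x^1$, so the operator $\cL_0$ commutes with $D_t$ and with $D_{x^j}$ for $j \ge 2$. Consequently, if $v \in C^{\infty}_{\text{loc}}(\bR^{d+1})$ satisfies $\cL_0 v = 0$ in $Q_R$, then for every nonnegative integer $k$ and every multi-index $\beta$ supported on $\{2, \ldots, d\}$, the derivative $D_t^k D_{x'}^{\beta} v$ again satisfies $\cL_0(D_t^k D_{x'}^{\beta} v) = 0$ in $Q_R$. The lemma will follow by an iterative application of Lemma \ref{lemma04012007_01} along a nested chain of cylinders inside $Q_R$.

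To make this concrete, write $\gamma = (\gamma^1, \gamma')$ with $\gamma^1 \in \{0, 1, 2\}$ and $\gamma'$ a multi-index involving only $x^2, \ldots, x^d$, and set $w := D_t^m D_{x'}^{\gamma'} v$; then $\cL_0 w = 0$. The key one-shot reduction is
$$
\|D_t^m D_x^\gamma v\|_{L_p(Q_r)}
\le \|w\|_{L_p(Q_r)} + \|w_x\|_{L_p(Q_r)} + \|w_{xx}\|_{L_p(Q_r)},
$$
whose right-hand side is controlled by Lemma \ref{lemma04012007_01} applied to $w$: for any $r_1 \in (r, R)$,
$$
\|D_t^m D_x^\gamma v\|_{L_p(Q_r)} \le N \|D_t^m D_{x'}^{\gamma'} v\|_{L_p(Q_{r_1})}.
$$
The restriction $\gamma^1 \le 2$ is sharp at this step, since Lemma \ref{lemma04012007_01} only provides up to two spatial derivatives; any further $x^1$-differentiation would require regularity of $a^{ij}$ in $x^1$, which we do not have.

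It then remains to strip the $m$ time-derivatives and the $|\gamma'|$ tangential derivatives from $w$ iteratively. I fix a nested chain $Q_r = Q_{r_0} \subset Q_{r_1} \subset \cdots \subset Q_{r_K} \subset Q_R$ with $K = m + |\gamma'| + 1$ and radii equally spaced in $[r, R]$. At each stage, applying Lemma \ref{lemma04012007_01} to a function of the form $D_t^{k} D_{x'}^{\beta} v$ (which is annihilated by $\cL_0$) lets me reduce either $k$ by one (using the $\|u_t\|$ term in the inequality) or $|\beta|$ by one (using the $\|u_x\|$ term) at the price of enlarging the cylinder from $Q_{r_i}$ to $Q_{r_{i+1}}$. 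Chaining these $m + |\gamma'|$ inequalities yields $\|w\|_{L_p(Q_{r_1})} \le N \|v\|_{L_p(Q_R)}$, with a constant depending only on $d, p, \delta, \gamma, m, r, R$. The main (very modest) obstacle is the bookkeeping, namely ensuring the number of iterations is finite (it equals $m + |\gamma'|$) and that the accumulated constant depends only on the declared parameters; no genuinely new analytic input is needed beyond Lemma \ref{lemma04012007_01} and the translation invariance of $\cL_0$ in $t$ and $x'$.
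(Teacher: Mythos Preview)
Your argument is correct and is precisely the standard route: exploit that $\cL_0$ commutes with $D_t$ and $D_{x'}$ (since $a^{ij}=a^{ij}(x^1)$), then iterate Lemma~\ref{lemma04012007_01} along a finite chain of nested cylinders, peeling off one $t$- or $x'$-derivative at each step and handling the at most two $x^1$-derivatives in a single application. This is exactly the approach the paper intends---it does not write out a proof but directs the reader to repeat the $L_2$ argument of Lemma~4.3 in \cite{Doyoon&Krylov:parabolic:2006} with $p$ in place of $2$, which is the same iteration you carry out.
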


\begin{lemma} \label{lemma04012007}
Let $p \ge 2$
and $v \in C_{\text{loc}}^{\infty}(\bR^{d+1})$
be a function such that
$\cL_{0} v = 0 $ in $Q_{4}$.
Then
$$
\sup_{Q_1}|v_{tt}|
+ \sup_{Q_1}|v_{tx}|
+ \sup_{Q_1}|v_{txx'}|
+ \sup_{Q_1}|v_{xxx'}|
\le N \| v \|_{L_p(Q_4)},
$$
where $N = N(d, p, \delta)$.
\end{lemma}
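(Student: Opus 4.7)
My plan is to reduce sup-norm bounds to $L_p$-bounds on mixed derivatives via Sobolev embedding, then dispose of the single troublesome case $\gamma^1=2$ by means of the equation itself. Since the coefficients $a^{ij}$ depend only on $x^1$, the operator $\cL_0$ commutes with $\partial_t$ and with $\partial_{x^k}$ for $k\ge 2$; hence for any multi-index $\beta$ supported in the $(t,x^2,\dots,x^d)$-variables and any $k\le 2$, the function $\partial^\beta\partial_{x^1}^k v$ again solves $\cL_0 w=0$ in $Q_4$. Iterating Lemma~\ref{lemma04012007_02} on a chain of nested cylinders $Q_1\subset Q_2\subset Q_3\subset Q_4$ then yields
$$
\|D_t^{m}D_x^{\gamma}v\|_{L_p(Q_3)}\le N(m,\gamma,d,p,\delta)\,\|v\|_{L_p(Q_4)}
$$
for every nonnegative integer $m$ and every multi-index $\gamma$ with $\gamma^1\le 2$.

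All the target derivatives in the statement---namely $v_{tt}$, $v_{tx^i}$, $v_{tx^ix^j}$ with $j\ne 1$, and $v_{x^ix^jx^k}$ with $k\ne 1$ and $(i,j)\ne (1,1)$---involve at most one $x^1$-derivative. For such a $w$ with $\gamma^1(w)\le 1$, the previous estimate controls $\partial_t^{r_0}w$, $\partial_{x^j}^{r_j}w$ for $j\ne 1$, and $\partial_{x^1}^{r_1}w$ with $r_1:=2-\gamma^1(w)\in\{1,2\}$, in $L_p(Q_3)$ for arbitrary $r_0,r_j\ge 0$. Applying an anisotropic Sobolev embedding of Besov--Il'in--Nikol'skii type---or, equivalently, the one-dimensional embedding $W^{2,p}(\bR)\hookrightarrow L^\infty(\bR)$ used in the $x^1$-slice followed by a standard parabolic Sobolev estimate in the $d$-dimensional $(t,x')$-slice---together with a routine cutoff localization between $Q_1$ and $Q_3$, gives $\|w\|_{L^\infty(Q_1)}\le N\|v\|_{L_p(Q_4)}$. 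The balance condition $1/r_0+1/r_1+\sum_{j\ge 2}1/r_j<p$ is satisfied by taking $r_0$ and the $r_j$ large, since $r_1\ge 1$ and $p\ge 2$.

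The remaining case is $w=v_{x^1x^1x^k}$ for some $k\ne 1$, where $\gamma^1=2$ leaves no $x^1$-derivative available to feed the embedding. Here I differentiate the equation $\cL_0 v=0$ in the direction $x^k$---since $a^{lm}=a^{lm}(x^1)$ and $k\ne 1$, no derivative falls on the coefficients---to obtain
$$
a^{11}(x^1)\,v_{x^1x^1x^k}=-v_{tx^k}-2\sum_{l\ne 1}a^{1l}(x^1)\,v_{x^1x^lx^k}-\sum_{l,m\ne 1}a^{lm}(x^1)\,v_{x^lx^mx^k},
$$
divide by $a^{11}\ge\delta$, and invoke the previous paragraph for each right-hand side term (each having $\gamma^1\le 1$). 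The principal technical point throughout is precisely this anisotropy: because $v$'s regularity in $x^1$ is capped at two derivatives, standard isotropic Sobolev is unavailable, and the argument must exploit the abundant $(t,x')$-regularity inherited from the commutativity of $\cL_0$ with $\partial_t$ and $\partial_{x'}$.
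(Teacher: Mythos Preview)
Your argument is correct and follows the natural route, which is almost certainly what the paper has in mind (it gives no self-contained proof here, only a reference to Lemma~4.4 of \cite{Doyoon&Krylov:parabolic:2006} with $p$ replacing $2$): use Lemma~\ref{lemma04012007_02} to bound $\|D_t^m D_x^\gamma v\|_{L_p}$ on an intermediate cylinder for every $m$ and every $\gamma$ with $\gamma^1\le 2$, pass to the sup-norm via an anisotropic Sobolev embedding (your condition $\sum_i 1/r_i < p$ is indeed met, since $r_1\ge 1$, the remaining $r_i$ may be taken arbitrarily large, and $p\ge 2$), and recover the one exceptional term $v_{x^1x^1x^k}$ from the equation differentiated in $x^k$, dividing by $a^{11}\ge\delta$.

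Two minor remarks. First, Lemma~\ref{lemma04012007_02} already delivers the bound for arbitrary $m$ and $\gamma$ (with $\gamma^1\le 2$) in a single application with any $0<r<R$, so the nested chain $Q_1\subset Q_2\subset Q_3\subset Q_4$ is unnecessary. Second, for targets with $\gamma^1(w)=1$ your own formula gives $r_1=1$, so the one-dimensional embedding actually in play is $W^{1,p}\hookrightarrow L^\infty$ (valid for $p>1$ in one dimension), not $W^{2,p}$; this is consistent with your balance computation but worth stating correctly.
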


The proofs of the lemmas and theorem below are 
almost identical to those in \cite{Krylov_2007_mixed_VMO},
specifically, proofs of Lemma 5.9, Theorem 5.10,
and Theorem 5.1 in \cite{Krylov_2007_mixed_VMO}.
Basically, one can follow the steps in the proofs there
using the above lemmas.
However, rather than referring to \cite{Krylov_2007_mixed_VMO},
we give here complete proofs
to provide the details of our case.

\begin{lemma} \label{lemma04012007_03}
Let $p \ge 2$.
For every $v \in C_{\text{loc}}^{\infty}(\bR^{d+1})$
such that $\cL_{\lambda} v = 0$ in $Q_{4}$,
we have
$$
\sup_{Q_1}|v_{tt}|
+ \sup_{Q_1}|v_{tx}|
+ \sup_{Q_1}|v_{txx'}|
+ \sup_{Q_1}|v_{xxx'}|
$$
$$
\le N(d, p, \delta) 
\left( \|v_{xx}\|_{L_p(Q_4)} + \|v_t\|_{L_p(Q_4)} + \sqrt{\lambda} \|v_x\|_{L_p(Q_4)} \right).
$$
\end{lemma}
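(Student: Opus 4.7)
The plan is to reduce the case $\lambda > 0$ to $\lambda = 0$ (covered by Lemma~\ref{lemma04012007}) via the classical extension trick in an auxiliary spatial variable. Introduce $y \in \bR$ and set
$$
w(t, x, y) := v(t, x)\cos\bigl(\sqrt{\lambda}\, y\bigr).
$$
Since $w_{yy} = -\lambda w$, one verifies $w_t + a^{ij}(x^1)w_{x^i x^j} + w_{yy} = (\cL_\lambda v)\cos(\sqrt{\lambda}\, y) = 0$ in $Q_4 \times \bR$. Viewing $w$ as a function on $\bR \times \bR^{d+1}$ with spatial variable $X = (x, y)$, the extended operator has block-diagonal coefficient matrix (the $a^{ij}(x^1)$ block and an additional entry equal to $1$), measurable in $X^1 = x^1$ and uniformly elliptic with constant $\min(\delta, 1)$. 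Since $Q_4^{(d+1)} \subset Q_4 \times (-4, 4)$, the hypotheses of Lemma~\ref{lemma04012007} apply to $w$ in dimension $d+1$.

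To bring the right-hand side into the desired second-derivative form, I apply Lemma~\ref{lemma04012007} not to $w$ itself but to $w - c_0 - c_1 \cdot X$ with constants $c_0 \in \bR$, $c_1 \in \bR^{d+1}$. Affine-in-$X$ functions lie in the kernel of the extended operator, and every derivative on the left-hand side of Lemma~\ref{lemma04012007} is either second-or-higher order in $X$ or involves a $t$-derivative, so each is unaffected by such an affine shift. Taking the infimum over $(c_0, c_1)$ and invoking a parabolic Poincar\'e-type estimate
$$
\inf_{c_0, c_1}\|w - c_0 - c_1 \cdot X\|_{L_p(Q_4^{(d+1)})} \le N\bigl(\|w_{XX}\|_{L_p(Q_4^{(d+1)})} + \|w_t\|_{L_p(Q_4^{(d+1)})}\bigr)
$$
(a Bramble--Hilbert-type inequality: the right-hand side vanishes exactly on the affine-in-$X$ functions) yields
$$
\sup_{Q_1^{(d+1)}}|w_{tt}| + \sup|w_{tX}| + \sup|w_{tXX'}| + \sup|w_{XXX'}| \le N\bigl(\|w_{XX}\|_{L_p(Q_4^{(d+1)})} + \|w_t\|_{L_p(Q_4^{(d+1)})}\bigr).
$$
This Poincar\'e step is the main technical ingredient.

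Restricting the left-hand supremum to the slice $y = 0$, where $\cos(\sqrt{\lambda}\, y) = 1$ and the $(t, x)$-derivatives of $w$ coincide with those of $v$, dominates the LHS of the target inequality. For the right-hand side, direct differentiation yields
$$
w_t = v_t\cos(\sqrt{\lambda}\, y), \quad w_{xx} = v_{xx}\cos(\sqrt{\lambda}\, y), \quad w_{xy} = -\sqrt{\lambda}\, v_x\sin(\sqrt{\lambda}\, y), \quad w_{yy} = -\lambda v\cos(\sqrt{\lambda}\, y),
$$
so $\|w_t\|_{L_p(Q_4^{(d+1)})} \le N\|v_t\|_{L_p(Q_4)}$ and
$$
\|w_{XX}\|_{L_p(Q_4^{(d+1)})} \le N\bigl(\|v_{xx}\|_{L_p(Q_4)} + \sqrt{\lambda}\|v_x\|_{L_p(Q_4)} + \lambda\|v\|_{L_p(Q_4)}\bigr).
$$
Finally, the PDE $\cL_\lambda v = 0$ gives $\lambda v = v_t + a^{ij}v_{x^i x^j}$ pointwise on $Q_4$, so $\lambda\|v\|_{L_p(Q_4)} \le \|v_t\|_{L_p(Q_4)} + \delta^{-1}\|v_{xx}\|_{L_p(Q_4)}$, absorbing the extraneous $\lambda\|v\|$ term. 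The hardest step is the parabolic Poincar\'e inequality above; the rest is bookkeeping through the extension.
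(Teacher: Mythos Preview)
Your proof is correct and follows essentially the same approach as the paper: the cosine extension in an extra spatial variable to reduce $\lambda>0$ to $\lambda=0$, a Poincar\'e-type inequality (the paper cites Lemma~5.4 of \cite{Krylov_2007_mixed_VMO} with the explicit choice $c_0=(v)_{Q_4}$, $c_1^i=(v_{x^i})_{Q_4}$ rather than an abstract infimum), and the use of $\lambda v=\cL_0 v$ on $Q_4$ to absorb the $\lambda\|v\|_{L_p}$ term. The only structural difference is that the paper first establishes the $\lambda=0$ inequality in dimension $d$ and then applies it to the extended function, whereas you carry out the Poincar\'e step directly in dimension $d+1$; this is cosmetic.
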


\begin{proof}
We first note that, in case $\lambda = 0$,
by Lemma \ref{lemma04012007}
$$
I := \sup_{Q_1}|v_{tt}|
+ \sup_{Q_1}|v_{tx}|
+ \sup_{Q_1}|v_{txx'}|
+ \sup_{Q_1}|v_{xxx'}|
\le N \| v \|_{L_p(Q_4)},
$$
The function $u:=v - \left(v\right)_{Q_4} - x^i \left(v_{x^i}\right)_{Q_4}$
can replace $v$ in the above inequality
since $\cL_0 u = 0$ in $Q_4$.
This together with the fact that
$D_t^m D_{x}^{\gamma} v = D_t^m D_{x}^{\gamma} u$
for $m \ge 1$ or $|\gamma| \ge 2$
gives us
$$
I
\le N \| v - \left(v\right)_{Q_4} - x^i \left(v_{x^i}\right)_{Q_4} \|_{L_p(Q_4)}.
$$
This and Lemma 5.4 in \cite{Krylov_2007_mixed_VMO} prove the inequality in the lemma 
for $\lambda = 0$.

In case $\lambda > 0$, 
let $\mathbf{v}(t,\mathbf{x}) = \mathbf{v}(t,x,\xi)$ be a function on $\bR^{d+2}$
defined by
$$
\mathbf{v}(t,\mathbf{x}) = v(t,x) \cos(\sqrt{\lambda} \, \xi),
$$
where $(t, \mathbf{x}) = (t, x, \xi)$, $\xi \in \bR$,
and
$$
\mathbf{Q}_r = (0,r^2) \times \{\mathbf{x} \in \bR^{d+1}: |\mathbf{x}| < r\}.
$$
Observe that
$$
D_t^m D_x^{\gamma} v(t,x)
= D_t^m D_x^{\gamma} \mathbf{v}(t,x,0),
$$
where $m$ is a non-negative integer and $\gamma$ is a multi-index
with respect to $x \in \bR^d$.
Thus
$$
\sup_{Q_1}|D_t^m D_x^{\gamma} v|
\le \sup_{\mathbf{Q}_1}|D_t^m D_x^{\gamma} \mathbf{v}|.
$$
In addition,
$$
\cL_0 \mathbf{v} + \mathbf{v}_{\xi \xi} = 0
\quad
\text{in}
\quad
\mathbf{Q}_{4}.
$$
Hence by the above reasoning for the case $\lambda = 0$  
we have
\begin{equation}							\label{04012007_01}
I
\le N \left( \| \mathbf{v}_{\mathbf{x}\mathbf{x}} \|_{L_p(\mathbf{Q}_4)}
+ \| \mathbf{v}_{t} \|_{L_p(\mathbf{Q}_4)}
\right).
\end{equation}
We see that $\mathbf{v}_{\mathbf{x}\mathbf{x}}$ is
$$
v_{xx} \cos ( \sqrt{\lambda} \, \xi),
\quad
- \sqrt{\lambda} v_{x} \sin ( \sqrt{\lambda} \, \xi),
\quad
\text{or}
\quad
- \lambda v \cos ( \sqrt{\lambda} \, \xi).
$$
Therefore, the right-hand side of the inequality \eqref{04012007_01}
is not greater than a constant times 
$$
\| v_{xx} \|_{L_p(Q_4)}
+ \| v_{t} \|_{L_p(Q_4)} + \sqrt{\lambda} \| v_{x} \|_{L_p(Q_4)}
+ \lambda \| v \|_{L_p(Q_4)}.
$$
This is bounded by the right-hand side of the inequality in the lemma
(note that $\lambda v = \cL_0 v$ in $Q_4$).
The lemma is proved.
~\end{proof}

\begin{lemma} \label{lemma04012007_04}
Let $p \ge 2$, $\lambda \ge 0$, $\kappa \ge 4$, and $r \in (0,\infty)$.
Let $v \in C_{\text{loc}}^{\infty}(\bR^{d+1})$ be 
such that
$\cL_{\lambda}v = 0$ in $Q_{\kappa r}$.
Then there is a constant $N$, depending only on $d$, $p$, and $\delta$,
such that
\begin{multline}							\label{04012007_02}
\dashint_{Q_r} | v_{t}(t,x) - \left( v_{t} \right)_{Q_r} |^p
\, dx \, dt
+ \dashint_{Q_r} | v_{xx'}(t,x) - \left( v_{xx'} \right)_{Q_r} |^p
\, dx \, dt \\
\le N \kappa^{-p} \left( |v_{xx}|^p + |v_{t}|^p + \lambda^{p/2} |v_{x}|^p \right)_{Q_{\kappa r}}.
\end{multline}
\end{lemma}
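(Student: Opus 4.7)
The plan is to combine parabolic scaling with the pointwise derivative bound of Lemma \ref{lemma04012007_03} and an elementary mean value argument.

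First I reduce to $r=1$. Under the rescaling $w(t,x) = v(r^2 t, rx)$, cylinders transform as $Q_s$ for $v$ becoming $Q_{s/r}$ for $w$, and the equation $\cL_\lambda v = 0$ on $Q_{\kappa r}$ becomes $\cL_{r^2\lambda}w = 0$ on $Q_\kappa$ with coefficients $\tilde a^{ij}(x^1) = a^{ij}(rx^1)$, still measurable in $x^1$ and uniformly elliptic with the same $\delta$. Each term of \eqref{04012007_02} scales homogeneously in $r$, so the inequality is preserved and it suffices to treat the case $r=1$.

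Second, with $r=1$ fixed, I rescale once more by $\mu := \kappa/4 \ge 1$: set $\tilde v(t,x) = v(\mu^2 t, \mu x)$, so $\tilde v$ solves $\cL_{\mu^2\lambda}\tilde v = 0$ in $Q_4$. Lemma \ref{lemma04012007_03} applied to $\tilde v$ bounds $\sup_{Q_1}(|\tilde v_{tt}|+|\tilde v_{tx}|+|\tilde v_{txx'}|+|\tilde v_{xxx'}|)$ by $N(\|\tilde v_{xx}\|_{L_p(Q_4)} + \|\tilde v_t\|_{L_p(Q_4)} + \sqrt{\mu^2\lambda}\,\|\tilde v_x\|_{L_p(Q_4)})$. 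Unwinding the chain rule (each time derivative of $v$ carries $\mu^{-2}$, each spatial derivative $\mu^{-1}$) and a change of variables (the $L_p(Q_4)$-average for $\tilde v$ equals $\mu^2$ times the $L_p(Q_\kappa)$-average for $v$, modulo a Jacobian constant), the suprema pick up net factors $\mu^{-2}, \mu^{-1}, \mu^{-2}, \mu^{-1}$ respectively times
$$A := \bigl[\bigl(|v_{xx}|^p + |v_t|^p + \lambda^{p/2}|v_x|^p\bigr)_{Q_\kappa}\bigr]^{1/p}.$$
Since $\mu \ge 1$, all four quantities are $\le N\mu^{-1}A$ on $Q_1$.

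Third, a straightforward mean value argument yields, for any smooth $f$ on $Q_1$,
$$\dashint_{Q_1}|f - (f)_{Q_1}|^p \, dx \, dt \le N\bigl(\sup_{Q_1}|f_t|^p + \sup_{Q_1}|f_x|^p\bigr).$$
Applying this with $f = v_t$ (using the bounds on $\sup_{Q_1}|v_{tt}|$ and $\sup_{Q_1}|v_{tx}|$) and with each component $f = v_{x^i x^j}$, $j \ne 1$ (using $\sup_{Q_1}|v_{txx'}|$ and $\sup_{Q_1}|v_{xxx'}|$), then summing, yields \eqref{04012007_02} with the factor $\mu^{-p} = 4^p \kappa^{-p}$, and the $4^p$ is absorbed into $N$.

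The only real obstacle is the bookkeeping of $\mu$-powers in the second step, so that the Jacobian contribution from the change of variables cancels against the chain rule factors to leave the announced $\mu^{-1}$ gain; the analytical substance is entirely supplied by Lemma \ref{lemma04012007_03}.
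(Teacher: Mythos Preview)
Your proof is correct and follows essentially the same strategy as the paper's: reduce to $r=1$ by parabolic scaling, rescale by $\kappa/4$ to apply Lemma~\ref{lemma04012007_03} on $Q_4$, then unwind the scaling and use a mean value bound to control the oscillation by the suprema of the higher derivatives. Your power-counting ($\mu^{-2},\mu^{-1},\mu^{-2},\mu^{-1}$) matches the paper's computation $(4/\kappa)^3\check I$ combined with the $\mu^2$ coming from the right-hand side, and the only cosmetic difference is that the paper bounds the oscillation by suprema over $Q_{\kappa/4}$ rather than $Q_1$, which is immaterial since $Q_1\subset Q_{\kappa/4}$.
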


\begin{proof}
We first show that that the inequality \eqref{04012007_02}
follows from the case with $r = 1$.
To see this, for a given $v \in C_{\text{loc}}^{\infty}(\bR^{d+1})$
such that $\cL_{\lambda} v = 0$ in $Q_{\kappa r}$, $r > 0$, 
we set $\hat{v}(t,x) = v(r^2 t, r x)$.
Then $\hat{v}$ satisfies 
$$
\hat{\cL}_{r^2 \lambda} \hat{v}(t,x) := 
\left(\frac{\partial}{\partial t} + a^{ij}(r x^1)
\frac{\partial^2}{\partial x^i \partial x^j} - r^2 \lambda \right) \hat{v}(t,x)
$$
$$
= r^2 \left( \cL_{\lambda} v \right)(r^2 t, r x)
= 0
\quad
\text{in}
\quad
Q_{\kappa}.
$$
Note that the coefficients $a^{ij}(r x^1)$ satisfy 
Assumption \ref{assum_01} with the same $\delta$.
Thus, if the inequality \eqref{04012007_02} holds true for $r = 1$,
we have
$$
\dashint_{Q_1} | \hat{v}_{t}(t,x) - \left( \hat{v}_{t} \right)_{Q_1} |^p
\, dx \, dt
+
\dashint_{Q_1} | \hat{v}_{xx'}(t,x) - \left( \hat{v}_{xx'} \right)_{Q_1} |^p
\, dx \, dt
$$
$$
\le N \kappa^{-p} \left( |\hat{v}_{xx}|^p + |\hat{v}_{t}|^p + (r^2\lambda)^{p/2} |\hat{v}_{x}|^p \right)_{Q_{\kappa}}.
$$
This proves the inequality \eqref{04012007_02} for $r > 0$ since
$$
\dashint_{Q_1} | \hat{v}_{t}(t,x) - \left( \hat{v}_{t} \right)_{Q_1} |^p
\, dx \, dt
= r^{2p} \dashint_{Q_r} | v_{t}(t,x) - \left( v_{t} \right)_{Q_r} |^p
\, dx \, dt,
$$
$$
\dashint_{Q_1} | \hat{v}_{xx'}(t,x) - \left( \hat{v}_{xx'} \right)_{Q_1} |^p
\, dx \, dt
= r^{2p} \dashint_{Q_r} | v_{xx'}(t,x) - \left( v_{xx'} \right)_{Q_r} |^p
\, dx \, dt
$$
$$
\left( |\hat{v}_{xx}|^p + |\hat{v}_{t}|^p + (r^2\lambda)^{p/2} |\hat{v}_{x}|^p \right)_{Q_{\kappa}}
= r^{2p}
\left( |v_{xx}|^p + |v_{t}|^p + \lambda^{p/2} |v_{x}|^p \right)_{Q_{\kappa r}}.
$$

Now we prove the inequality \eqref{04012007_02} for $r = 1$.
For $v \in C_{\text{loc}}^{\infty}(\bR^{d+1})$ such that $\cL_{\lambda} v = 0$
in $Q_{\kappa}$, $\kappa \ge 4$,
set
$$
\check{v}(t,x) = v \left(\left(\frac{\kappa}{4}\right)^2 \! t, \, \frac{\kappa}{4} x \right),
\quad
\check{a}^{ij}(x^1) = a^{ij}(\kappa x^1/4).
$$
Then
$$
\check{\cL}_{(\frac{\kappa}{4})^2 \lambda} \check{v}(t,x)
:=
\left(\frac{\partial}{\partial t} + \check{a}^{ij}(x^1)
\frac{\partial^2}{\partial x^i \partial x^j} - \left(\frac{\kappa}{4}\right)^2 \lambda \right) \check{v}(t,x)
$$
$$
= \left(\frac{\kappa}{4}\right)^2 \left( \cL_{\lambda} v \right)\left(\left(\frac{\kappa}{4}\right)^2 \! t, \, \frac{\kappa}{4} x \right)
= 0
\quad
\text{in}
\quad
Q_{4}.
$$
Thus by Lemma \ref{lemma04012007_03},
it follows that
\begin{equation}							\label{04012007_001}
\check{I} \le N \left( \| \check{v}_{xx} \|_{L_p(Q_4)}
+ \| \check{v}_{t} \|_{L_p(Q_4)}
+ \frac{\kappa}{4}\sqrt{\lambda}\| \check{v}_{x} \|_{L_p(Q_4)} \right),
\end{equation}
where
$$
\check{I} = \sup_{Q_1}|\check{v}_{tt}|
+ \sup_{Q_1}|\check{v}_{tx}|
+ \sup_{Q_1}|\check{v}_{txx'}|
+ \sup_{Q_1}|\check{v}_{xxx'}|.
$$
Note that 
$$
\left(4/\kappa\right)^3 \check{I} = 
\left(\kappa/4\right)\left(\sup_{Q_{\kappa/4}}|v_{tt}|
+ \sup_{Q_{\kappa/4}}|v_{txx'}|\right)
+ \sup_{Q_{\kappa/4}}|v_{tx}|
+ \sup_{Q_{\kappa/4}}|v_{xxx'}|.
$$
Using this, the inequality \eqref{04012007_001},
and $\kappa \ge 4$,
we have
$$
\dashint_{Q_1} | v_{t}(t,x) - \left( v_{t} \right)_{Q_1} |^p
\, dx \, dt
+ \dashint_{Q_1} | v_{xx'}(t,x) - \left( v_{xx'} \right)_{Q_1} |^p
\, dx \, dt
$$
$$
\le N 
\left(
\sup_{Q_{\kappa/4}}|v_{tt}|
+ \sup_{Q_{\kappa/4}}|v_{tx}|
+ \sup_{Q_{\kappa/4}}|v_{txx'}|
+ \sup_{Q_{\kappa/4}}|v_{xxx'}|
\right)^p
\le N
\kappa^{-3p} {\check{I}}^{p}
$$
$$
\le  N \kappa^{-3p} \left( \| \check{v}_{xx} \|^p_{L_p(Q_4)}
+ \| \check{v}_{t} \|^p_{L_p(Q_4)}
+ \kappa^{p}\lambda^{p/2}\| \check{v}_{x} \|^p_{L_p(Q_4)} \right)
$$
$$
= N \kappa^{-p} \left( |v_{xx} |^p + |v_{t}|^p 
+ \lambda^{p/2} |v_{x}|^p \right)_{L_p(Q_{\kappa})}.
$$
This finishes the proof.
~\end{proof}

\begin{theorem} \label{theorem04022007_01}
Let $p \ge 2$. 
Then there is a constant $N$, depending only on $d$, $p$, and $\delta$,
such that, for any $u \in W_p^{1,2}(\bR^{d+1})$,
$r \in (0, \infty)$, and $\kappa \ge 8$,
$$
\dashint_{Q_r} | u_{t}(t,x) - \left( u_{t} \right)_{Q_r} |^p
\, dx \, dt
+
\dashint_{Q_r} | u_{xx'}(t,x) - \left( u_{xx'} \right)_{Q_r} |^p
\, dx \, dt
$$
$$
\le N \kappa^{d+2} \left( |\cL_0 u|^p \right)_{Q_{\kappa r}}
+ N \kappa^{-p} \left( |u_{xx}|^p \right)_{Q_{\kappa r}}.
$$
\end{theorem}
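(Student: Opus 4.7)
The plan is to follow the standard decomposition strategy used in Krylov's proof of the analogous Theorem~5.1 of \cite{Krylov_2007_mixed_VMO}: split $u = v + w$ so that $w$ absorbs the inhomogeneity $f := \cL_0 u$ on $Q_{\kappa r}$ and $v$ satisfies $\cL_0 v = 0$ there. The oscillations of $v$ are then controlled by the smooth-solution estimate of Lemma~\ref{lemma04012007_04}, while $w$ is controlled in absolute $L_p$-norm via Theorem~\ref{theorem04012007}.

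First I would invoke Theorem~\ref{theorem04012007} (at $\lambda = 0$; solvability at $\lambda = 0$ on a finite-time strip containing $Q_{\kappa r}$ is reduced to the stated $\lambda > 0$ case by the exponential substitution $w \mapsto e^{\mu t}w$) to produce $w$ with $\cL_0 w = f \chi_{Q_{\kappa r}}$ and $\| w_t \|_{L_p}^p + \| w_{xx} \|_{L_p}^p \le N \| f \|_{L_p(Q_{\kappa r})}^p$. Since $|Q_{\kappa r}|/|Q_r| = N \kappa^{d+2}$, combining this with the elementary bound $\dashint_{Q_r} |g - (g)_{Q_r}|^p \le 2^p \dashint_{Q_r} |g|^p$ already yields the first summand on the right-hand side:
$$
\dashint_{Q_r} |w_t - (w_t)_{Q_r}|^p + \dashint_{Q_r} |w_{xx'} - (w_{xx'})_{Q_r}|^p \le N \kappa^{d+2} \dashint_{Q_{\kappa r}} |f|^p.
$$

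Next, set $v = u - w$, so that $\cL_0 v = 0$ on $Q_{\kappa r}$. After mollifying $v$ in $(t, x')$ (which commutes with $\cL_0$ since $a^{ij} = a^{ij}(x^1)$) to obtain a $C^\infty_{\mathrm{loc}}$ approximant, Lemma~\ref{lemma04012007_04} applied with $\lambda = 0$ gives
$$
\dashint_{Q_r} |v_t - (v_t)_{Q_r}|^p + \dashint_{Q_r} |v_{xx'} - (v_{xx'})_{Q_r}|^p \le N \kappa^{-p} \dashint_{Q_{\kappa r}} (|v_{xx}|^p + |v_t|^p).
$$
Writing $v_\bullet = u_\bullet - w_\bullet$, using the pointwise identity $u_t = f - a^{ij} u_{x^i x^j}$ (so that $|u_t|^p \le N(|f|^p + |u_{xx}|^p)$), and absorbing the $w$-contribution via the $L_p$-bound from Step~1, the right-hand side above is majorized by $N \kappa^{-p} \dashint_{Q_{\kappa r}} |u_{xx}|^p + N \kappa^{-p} \dashint_{Q_{\kappa r}} |f|^p$. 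The residual $\kappa^{-p}$-multiple of $\dashint|f|^p$ is absorbed into the $\kappa^{d+2}\dashint|f|^p$ term already produced, so adding the $v$- and $w$-oscillations by the triangle inequality completes the proof.

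The main technical point is arranging solvability of $\cL_0 w = f \chi_{Q_{\kappa r}}$ with the sharp $\lambda$-independent $L_p$-estimate; this is handled either by the exponential time-change described above or by first solving with small $\lambda > 0$ and passing to the limit using the $\lambda$-independent constant in Theorem~\ref{theorem04012007}. The mollification of $v$ required to legally invoke Lemma~\ref{lemma04012007_04} is routine because the dependence of the coefficients only on $x^1$ makes mollifiers in $(t,x')$ commute with $\cL_0$.
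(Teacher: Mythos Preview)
Your proposal is correct and follows essentially the same decomposition strategy as the paper's proof. There are two minor bookkeeping differences worth noting. First, rather than solving $\cL_0 w = f\chi_{Q_{\kappa r}}$ with $f = \cL_0 u$ (which, as you note, requires either an exponential time-change or a limiting argument to get around the lack of $\lambda=0$ solvability), the paper fixes $\lambda>0$, sets $f_\lambda := \cL_\lambda u$, solves $\cL_\lambda v = f_\lambda(1-\eta)$, and lets $\lambda \searrow 0$ only in the final scalar inequality; this sidesteps the need to construct a $\lambda=0$ solution at all. Second, the paper uses a smooth cutoff $\eta$ (and assumes $a^{ij}$ smooth) so that classical interior regularity gives $v \in C^\infty$ immediately, making the mollification step unnecessary.
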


\begin{proof}
Since $C_0^{\infty}(\bR^{d+1})$ is dense in $W_p^{1,2}(\bR^{d+1})$,
it is enough to have $u \in C_0^{\infty}(\bR^{d+1})$.
In addition, we can assume that $a^{ij}(x^1)$ are infinitely differentiable.
Take a $\lambda > 0$
and, for $u \in C_0^{\infty}(\bR^{d+1})$,
let
$$
f:= f_{\lambda} = \cL_{\lambda} u.
$$
We see $f \in C_{0}^{\infty}(\bR^{d+1})$.
For given $r > 0$ and $\kappa \ge 8$,
let $\eta \in C_0^{\infty}(\bR^{d+1})$
be a function such that $\eta = 1$ on $Q_{\kappa r/2}$
and $\eta = 0$ outside $\left(-(\kappa r)^2, (\kappa r)^2\right) \times B_{\kappa r}$.
Also let
$$
g := f \eta,
\quad
h := f(1- \eta).
$$
Then by Theorem \ref{theorem04012007} there exists a unique solution $v \in W_p^{1,2}(\bR^{d+1})$
(note that $\lambda > 0$)
to the equation 
$$
\cL_{\lambda} v = h.
$$
From the classical theory
we see that the function $v$ is infinitely differentiable.
Moreover, since $\cL_{\lambda} v = h = 0$ in $Q_{\kappa r/2}$
and $\kappa/2 \ge 4$, by Lemma \ref{lemma04012007_04},
we have
$$
\dashint_{Q_r} | v_{t}(t,x) - \left( v_{t} \right)_{Q_r} |^p
\, dx \, dt
+
\dashint_{Q_r} | v_{xx'}(t,x) - \left( v_{xx'} \right)_{Q_r} |^p
\, dx \, dt
$$
$$
\le N \kappa^{-p} \left( |v_{xx}|^p + |v_{t}|^p + \lambda^{p/2} |v_{x}|^p \right)_{Q_{\kappa r/2}}
$$
$$
\le N \kappa^{-p} \left( |v_{xx}|^p + |v_{t}|^p + \lambda^{p/2} |v_{x}|^p \right)_{Q_{\kappa r}}.
$$
Set $w := u - v \in W_{p}^{1,2}(\bR^{d+1})$.
Then from the above inequality it follows that
$$
\dashint_{Q_r} | u_{xx'}(t,x) - \left( u_{xx'} \right)_{Q_r} |^p
\, dx \, dt
$$
$$
\le 2^p \dashint_{Q_r} | w_{xx'}(t,x) - \left( w_{xx'} \right)_{Q_r} |^p
\, dx \, dt
+ 2^p \dashint_{Q_r} | v_{xx'}(t,x) - \left( v_{xx'} \right)_{Q_r} |^p
\, dx \, dt
$$
$$
\le N \left( |w_{xx'}|^p \right)_{Q_r}
+ N \kappa^{-p} \left( |v_{xx}|^p + |v_{t}|^p + \lambda^{p/2} |v_{x}|^p \right)_{Q_{\kappa r}}.
$$
Similar inequalities are possible with $u_t$ in place of $u_{xx'}$.
Thus we have
\begin{multline}							\label{04012007_002}
\dashint_{Q_r} | u_{t}(t,x) - \left( u_{t} \right)_{Q_r} |^p
\, dx \, dt
+ \dashint_{Q_r} | u_{xx'}(t,x) - \left( u_{xx'} \right)_{Q_r} |^p
\, dx \, dt \\
\le N \left( |w_{t}|^p + |w_{xx}|^p\right)_{Q_r}
+ N \kappa^{-p} \left( |v_{xx}|^p + |v_{t}|^p + \lambda^{p/2} |v_{x}|^p \right)_{Q_{\kappa r}}.
\end{multline}

Now we observe that
$$
\cL_{\lambda} w = \cL_{\lambda}(u-v)  = f - h = g
$$
and, by Theorem \ref{theorem04012007},
$$
\int_{Q_r} |w_{t}|^p \, dx \, dt
+ \int_{Q_r} |w_{xx}|^p \, dx \, dt
$$
$$
\le \|w_{t}\|^p_{L_{p}((0,\infty)\times\bR^d)}
+\|w_{xx}\|^p_{L_{p}((0,\infty)\times\bR^d)}
+\lambda^{p/2}\|w_{x}\|^p_{L_{p}((0,\infty)\times\bR^d)}
$$
$$
\le N \|g\|^p_{L_{p}((0,\infty)\times\bR^d)}
= N \int_{Q_{\kappa r}} |g|^p \, dx \, dt
\le N \int_{Q_{\kappa r}} |f|^p \, dx \, dt.
$$
From this we see that
$$
\left(|w_{t}|^p\right)_{Q_r} + \left(|w_{xx}|^p\right)_{Q_r}
\le N \kappa^{d+2} \left( |f|^p \right)_{Q_{\kappa r}},
$$
$$
\left( |w_{xx}|^p + |w_{t}|^p + \lambda^{p/2} |w_{x}|^p \right)_{Q_{\kappa r}}
\le N \left( |f|^p \right)_{Q_{\kappa r}}.
$$

Now we use the these inequalities as well as the inequality \eqref{04012007_002}.
We also use the fact $u = w + v$ and $\kappa \ge 8$.
Then we obtain
$$
\dashint_{Q_r} | u_{t}(t,x) - \left( u_{t} \right)_{Q_r} |^p
\, dx \, dt
+ \dashint_{Q_r} | u_{xx'}(t,x) - \left( u_{xx'} \right)_{Q_r} |^p
\, dx \, dt
$$
$$
\le N \kappa^{d+2} \left( |f|^p \right)_{Q_{\kappa r}}
+ N \kappa^{-p} \left( |w_{xx}|^p + |w_{t}|^p + \lambda^{p/2} |w_{x}|^p \right)_{Q_{\kappa r}}
$$
$$
+ N \kappa^{-p} \left( |u_{xx}|^p + |u_{t}|^p + \lambda^{p/2} |u_{x}|^p \right)_{Q_{\kappa r}}
$$
$$
\le N \kappa^{d+2} \left( |f|^p \right)_{Q_{\kappa r}}
+ N \kappa^{-p} \left( |u_{xx}|^p + |u_{t}|^p + \lambda^{p/2} |u_{x}|^p \right)_{Q_{\kappa r}}.
$$
To complete the proof, we use the fact
that $u_t = f + \lambda u - a^{ij} u_{x^i x^j}$,
and then let $\lambda \searrow 0$.
~\end{proof}

\section{Proof of Theorem \ref{theorem04022007}}		\label{section04102007}

Recall $Lu = u_t + a^{ij}(t,x) u_{x^i x^j} + b^{i} u_{x^i} + c u$,
where coefficients $a^{ij}$, $b^{i}$, and $c$ satisfy
Assumption \ref{assum_01} and \ref{assum_03}.
Set 
$$
L_0 u = u_t + a^{ij}(t,x) u_{x^i x^j}.
$$

\begin{lemma}							\label{lemma04022007}
Let $p > q \ge 2$, and $r \in (0,1]$.
Assume that $v \in W_{p,\text{loc}}^{1,2}(\bR^{d+1})$
satisfies $L_0 v = 0$ in $Q_{2r}$.
Then
$$
\left( |v_{xx}|^p \right)_{Q_r}^{1/p}
\le
N \left( |v_{xx}|^2 \right)^{1/2}_{Q_{2r}}
\le
N \left( |v_{xx}|^q \right)_{Q_{2r}}^{1/q},
$$
where $N$ depends only on $d$, $p$, $\delta$, and the function $\omega$.
\end{lemma}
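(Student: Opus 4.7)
The second inequality, $N\,(|v_{xx}|^{2})^{1/2}_{Q_{2r}} \le N\,(|v_{xx}|^{q})^{1/q}_{Q_{2r}}$, is immediate from H\"older's inequality (equivalently Jensen), since $q \ge 2$ and the brackets denote normalized averages over $Q_{2r}$. The real content is the first inequality, which I would prove by a freezing/cutoff/perturbation argument combined with the $L_p$-estimate for the operator $\cL_0$ supplied by Theorem \ref{theorem04012007}.

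First I would freeze the VMO part of the coefficients: define $\bar a^{ij}(x^1)$ as the average of $a^{ij}(\cdot,x^1,\cdot)$ in the $(t,x')$ directions over a cylinder containing $Q_{2r}$. Assumption \ref{assum_03} together with $r \le 1$ then controls $|\bar a^{ij}-a^{ij}|$ in an averaged $L_1$-sense on $Q_{2r}$ by $\omega(2r)$, while the ellipticity bound gives $|\bar a^{ij}-a^{ij}| \le 2\delta^{-1}$ pointwise. Since $L_0 v = 0$ in $Q_{2r}$,
\[
\cL_0 v = (\bar a^{ij}-a^{ij}) v_{x^i x^j} \quad\text{in } Q_{2r},
\]
where $\cL_0 u = u_t + \bar a^{ij}(x^1)u_{x^i x^j}$.

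Next I would localize with a smooth cutoff $\eta$ equal to $1$ on $Q_{\rho_1}$, supported in $Q_{\rho_2}$, with $r \le \rho_1 < \rho_2 \le 2r$ and $|D^\alpha\eta| \le N(\rho_2-\rho_1)^{-|\alpha|}$. Setting $w = \eta v \in W^{1,2}_p(\bR^{d+1})$, a direct computation gives
\[
\cL_0 w = \eta(\bar a^{ij}-a^{ij})v_{x^i x^j} + R(\eta,v),
\]
where $R(\eta,v)$ collects cutoff corrections of the form $\eta_t v$, $\bar a^{ij}\eta_{x^i x^j}v$, $2\bar a^{ij}\eta_{x^i}v_{x^j}$, all supported in $Q_{\rho_2}\setminus Q_{\rho_1}$ and bounded pointwise by $N(\rho_2-\rho_1)^{-2}(|v|+(\rho_2-\rho_1)|v_x|)$. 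Applying Theorem \ref{theorem04012007} with $\lambda=0$ to $w$ yields
\[
\|v_{xx}\|_{L_p(Q_{\rho_1})} \le \|w_{xx}\|_{L_p} \le N\|(\bar a-a)v_{xx}\|_{L_p(Q_{\rho_2})} + N\|R(\eta,v)\|_{L_p(Q_{\rho_2})}.
\]

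The main obstacle is the VMO perturbation term $\|(\bar a-a)v_{xx}\|_{L_p(Q_{\rho_2})}$: the VMO hypothesis only furnishes $L_1$-smallness of $\bar a-a$, while $\bar a-a$ is only bounded (not small) in $L_\infty$. I would handle this by combining the pointwise bound $|\bar a-a|^p \le (2\delta^{-1})^{p-1}|\bar a-a|$ with H\"older's inequality at a carefully chosen exponent, which yields, for any fixed $p$ and any $\tau\in(0,1)$,
\[
\|(\bar a-a)v_{xx}\|_{L_p(Q_{\rho_2})}^p \le N\,\omega(2r)^{\alpha}\,\|v_{xx}\|_{L_{p+\varepsilon}(Q_{\rho_2})}^p
\]
for appropriate $\alpha,\varepsilon>0$. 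To close the loop I would bootstrap: starting from the $L_2$-estimate (already available from the $p=2$ theory referenced in the discussion following Theorem \ref{theorem04012007}), use the inequality above together with a standard Giaquinta-type iteration lemma on the nested radii $\rho_1, \rho_2$ to incrementally raise the integrability exponent from $2$ to $p$, absorbing the $v_{xx}$-term on the right at each step by choosing $\rho_2-\rho_1$ suitably, and converting the lower-order cutoff contributions $\|v\|_{L_p}$, $\|v_x\|_{L_p}$ into a $\|v_{xx}\|_{L_2(Q_{2r})}$ bound via Poincar\'e/Caccioppoli on $Q_{2r}$.

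The hardest point is Step 4: the VMO term only gives $L_1$-smallness, so one cannot directly absorb $\|v_{xx}\|_{L_p}$ on the right; the bootstrap through intermediate exponents, combined with the higher-integrability a priori information that $v \in W^{1,2}_{p,\text{loc}}$ supplies, is what makes the argument close. Once this is done, the estimate $(|v_{xx}|^p)^{1/p}_{Q_r} \le N(|v_{xx}|^2)^{1/2}_{Q_{2r}}$ follows.
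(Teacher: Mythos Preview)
Your bootstrap in Step 4 does not close. The H\"older estimate you write,
\[
\|(\bar a-a)v_{xx}\|_{L_p(Q_{\rho_2})}^p \le N\,\omega(2r)^{\alpha}\,\|v_{xx}\|_{L_{p+\varepsilon}(Q_{\rho_2})}^p,
\]
controls the $L_p$-norm by a \emph{higher} $L_{p+\varepsilon}$-norm, which is the wrong direction for raising the exponent from $2$ to $p$. There is no way around this with the ingredients you list: since the VMO hypothesis only gives smallness of $\bar a - a$ in $L_1$ (and mere boundedness in $L_\infty$), any H\"older splitting of $\int|\bar a - a|^p|v_{xx}|^p$ that extracts a small factor from $\bar a - a$ necessarily puts $v_{xx}$ in $L_{p'}$ with $p'>p$. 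The a~priori assumption $v\in W^{1,2}_{p,\text{loc}}$ gives $v_{xx}\in L_p$ but not $L_{p+\varepsilon}$, so the right-hand side is not known to be finite, let alone controlled by $\|v_{xx}\|_{L_2}$. A Gehring-type self-improvement would only buy a small $\varepsilon$ above $2$, not an arbitrary $p$.

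The paper sidesteps this entirely. It does \emph{not} freeze and perturb $\cL_0$; instead it invokes the already-established global $L_p$-theory for the full VMO operator $L_0$ (Theorem~2.5 in \cite{Doyoon&Krylov:parabolic:2006}) to obtain directly the interior estimate
\[
\|u_{xx}\|_{L_p(Q_r)} \le N\bigl(\|L_0 u\|_{L_p(Q_{\kappa r})} + r^{-1}\|u_x\|_{L_p(Q_{\kappa r})} + r^{-2}\|u\|_{L_p(Q_{\kappa r})}\bigr),
\]
with no perturbation term on the right. Applied to $v$ with $L_0 v=0$, only lower-order terms remain; after subtracting a suitable first-order polynomial and using parabolic Sobolev--Poincar\'e (this is the content of Corollary~6.4 in \cite{Krylov_2007_mixed_VMO}), these are dominated by $\|v_{xx}\|_{L_2(Q_{2r})}$. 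The point is that the VMO perturbation has already been absorbed inside the proof of the $L_p$-theory for $L_0$, so it never reappears here.
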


\begin{proof}
This lemma is almost the same as 
Corollary 6.4 in \cite{Krylov_2007_mixed_VMO}
if $L_0$ is replaced by the operator used there.
In our case, we can repeat the argument in Corollary 6.4 of \cite{Krylov_2007_mixed_VMO}
if we have the estimate 
$$
\| u_{xx} \|_{L_p(Q_r)}
\le N
\left( \|L_0 u\|_{L_p(Q_{\kappa r})}
+ r^{-1}\|u_x\|_{L_p(Q_{\kappa r})}
+ r^{-2}\|u\|_{L_p(Q_{\kappa r})} \right)
$$
for $p \in (2, \infty)$ and $u \in W_{p, \text{loc}}^{1,2}(\bR^{d+1})$,
where $r \in (0,1]$, $\kappa \in (1, \infty)$,
and $N$ depends only on $d$, $p$, $\delta$, $\kappa$, and the function $\omega$.
This is obtained using
Theorem 2.5 in \cite{Doyoon&Krylov:parabolic:2006}
and the argument in the proof of Lemma 6.3 of \cite{Krylov_2007_mixed_VMO}.
~\end{proof}

The following theorem is proved in the same way as Lemma 3.1 in \cite{Krylov_2007_mixed_VMO}.
Because of the difference between our operator $L$ (or $L_0$) 
and the operator defined in \cite{Krylov_2007_mixed_VMO},
we give a complete proof here.

\begin{theorem}								\label{theorem04082007}
Let $p \ge 2$.
In case $p = 2$,
the coefficients $a^{ij}$ of $L_0$
are assumed to be independent of $x' \in \bR^{d-1}$.
Then there exists a constant $N$, depending on $d$, $p$, $\delta$, and 
the function $\omega$, such that,
for any $u \in C_0^{\infty}(\bR^{d+1})$, $\kappa \ge 16$,
and $r \in (0, 1/\kappa]$,
we have
$$
\dashint_{Q_r}
| u_{t}(t,x) - \left( u_{t} \right)_{Q_r} |^p \, dx \, dt
+ \dashint_{Q_r}
| u_{xx'}(t,x) - \left( u_{xx'} \right)_{Q_r} |^p \, dx \, dt
$$
$$
\le N \kappa^{d+2} \left( |L_0 u|^p \right)_{Q_{\kappa r}}
+ N \left( \kappa^{-p} + \kappa^{d+2} 
\hat{a}^{1/2} \right) \left( |u_{xx}|^p \right)_{Q_{\kappa r}},
$$
where $\hat{a} = a_{\kappa r}^{\#(t,x')}$.
\end{theorem}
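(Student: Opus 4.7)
The plan is to imitate the freezing-argument strategy used to prove Theorem \ref{theorem04022007_01}, but now with the VMO-in-$(t,x')$ coefficients $a^{ij}$ replaced by their $(t,x')$-averages. By density, I may take $u\in C_0^{\infty}(\bR^{d+1})$ and fix the cylinder $Q_{\kappa r}$. Set
\[
\bar{a}^{ij}(x^1) = \dashint_{\Gamma_{\kappa r}(0,0)} a^{ij}(\sigma, x^1, y')\,d\sigma\,dy', \qquad \bar{L}_0 u = u_t + \bar{a}^{ij}(x^1)\,u_{x^i x^j}.
\]
The operator $\bar{L}_0$ has coefficients depending only on $x^1$ and satisfies Assumption \ref{assum_01} with the same $\delta$, so Theorem \ref{theorem04022007_01} applied to $u$ and $\bar{L}_0$ yields
\[
\dashint_{Q_r}|u_t-(u_t)_{Q_r}|^p\,dx\,dt + \dashint_{Q_r}|u_{xx'}-(u_{xx'})_{Q_r}|^p\,dx\,dt \le N\kappa^{d+2}(|\bar{L}_0 u|^p)_{Q_{\kappa r}} + N\kappa^{-p}(|u_{xx}|^p)_{Q_{\kappa r}}.
\]
Since $\bar{L}_0 u = L_0 u + \sum_{i,j}(\bar{a}^{ij}-a^{ij})u_{x^i x^j}$, the task reduces to bounding, for each $(i,j)$, the quantity $J_{ij}:=(|(\bar{a}^{ij}-a^{ij})u_{x^i x^j}|^p)_{Q_{\kappa r}}$ by a constant multiple of $\hat{a}^{1/2}(|u_{xx}|^p)_{Q_{\kappa r}}$.

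For $J_{ij}$, I would use H\"older's inequality with exponent $2$:
\[
J_{ij} \le \bigl((|\bar{a}^{ij}-a^{ij}|^{2p})_{Q_{\kappa r}}\bigr)^{1/2}\bigl((|u_{x^i x^j}|^{2p})_{Q_{\kappa r}}\bigr)^{1/2}.
\]
The pointwise bound $|\bar{a}-a|\le 2\delta^{-1}$ implies $|\bar{a}-a|^{2p}\le N|\bar{a}-a|$, so the first factor collapses to an $L^1$-average; comparing the cylinder $Q_{\kappa r}$ with $\Lambda_{\kappa r}$ and unpacking the definition of $\cO^{(t,x')}_{\kappa r}(a^{ij})$ bounds it by $N\hat{a}^{1/2}$.

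The main obstacle will be the second factor, since transporting an $L^{2p}$-average of $u_{xx}$ back to an $L^p$-average is not possible for a generic function. To handle this, I would decompose $u=v+w$ on a slightly enlarged cylinder: pick a cutoff $\eta\in C_0^{\infty}(\bR^{d+1})$ with $\eta=1$ on $Q_{2\kappa r}$ and $\eta=0$ off $Q_{3\kappa r}$, let $w\in W^{1,2}_p(\bR^{d+1})$ be the global solution of $\bar{L}_0 w=\eta\,\bar{L}_0 u$ furnished by Theorem \ref{theorem04012007}, and set $v=u-w$, so that $\bar{L}_0 v=0$ on $Q_{2\kappa r}$. Since $\bar{L}_0$ has coefficients depending only on $x^1$ (hence trivially VMO in $(t,x')$), Lemma \ref{lemma04022007} applies to $v$ and gives the reverse H\"older inequality
\[
(|v_{xx}|^{2p})^{1/(2p)}_{Q_{\kappa r}} \le N\,(|v_{xx}|^p)^{1/p}_{Q_{2\kappa r}},
\]
while Theorem \ref{theorem04012007} provides $\|w_{xx}\|_{L_p(\bR^{d+1})} \le N\|\bar{L}_0 u\|_{L_p(Q_{3\kappa r})}$. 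Splitting the estimate of $J_{ij}$ between $v$ and $w$, invoking the reverse-H\"older bound on the $v$-piece, using the pointwise bound $|\bar{a}-a|\le 2\delta^{-1}$ on the $w$-piece to sidestep the $L^{2p}$-issue there, and absorbing the enlargement from $Q_{\kappa r}$ to $Q_{3\kappa r}$ into the constant by an innocuous rescaling of $\kappa$, I expect the estimate to close with the advertised $\kappa^{d+2}\hat{a}^{1/2}(|u_{xx}|^p)_{Q_{\kappa r}}$ term.
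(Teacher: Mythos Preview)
Your approach has a genuine gap in the treatment of the $w$-piece. When you solve $\bar{L}_0 w = \eta\,\bar{L}_0 u$ and then bound the $w$-contribution to $J_{ij}$ by the pointwise estimate $|\bar a - a|\le 2\delta^{-1}$, you get
\[
\bigl(|(\bar a - a)\,w_{xx}|^p\bigr)_{Q_{\kappa r}}
\le N\bigl(|w_{xx}|^p\bigr)_{Q_{\kappa r}}
\le N\bigl(|\bar{L}_0 u|^p\bigr)_{Q_{3\kappa r}}
\le N\Bigl[\bigl(|L_0 u|^p\bigr)_{Q_{3\kappa r}}+\bigl(|u_{xx}|^p\bigr)_{Q_{3\kappa r}}\Bigr],
\]
where the last step uses $\bar{L}_0 u = L_0 u + (\bar a - a)u_{xx}$ together with, again, only the pointwise bound on $\bar a - a$. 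After multiplication by $\kappa^{d+2}$ this produces a term $N\kappa^{d+2}\bigl(|u_{xx}|^p\bigr)_{Q_{3\kappa r}}$ carrying \emph{no} small factor---neither $\kappa^{-p}$ nor $\hat a^{1/2}$---so the estimate does not close. Rescaling $\kappa$ cannot repair this, because the offending term scales exactly like the left-hand side's worst behavior. (There is also a minor issue: Theorem \ref{theorem04012007} gives existence of $w$ only for $\lambda>0$, not for $\bar{L}_0$ with $\lambda=0$ on all of $\bR^{d+1}$.)

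The paper's proof avoids this circularity by reversing the order of the two operations: it first decomposes $u=v+w$ using the \emph{full} VMO operator $L_0$ (solving $L_0\tilde w = (L_0 u)\,I_{Q_{\kappa r}}$ on a finite time cylinder, via the $L_p$-theory for $L_0$ from \cite{Doyoon&Krylov:parabolic:2006}), so that $w$ is controlled directly by $\|L_0 u\|_{L_p(Q_{\kappa r})}$ with no $u_{xx}$ contamination, and $L_0 v=0$ in $Q_{\kappa r}$. Only then is the frozen operator $\bar{L}_0$ introduced, and Theorem \ref{theorem04022007_01} is applied to $v$ (not to $u$). Since $L_0 v=0$, one has $\bar{L}_0 v = (\bar a - a)v_{xx}$, and the reverse H\"older inequality of Lemma \ref{lemma04022007}---whose hypothesis is precisely $L_0 v=0$---controls $(|v_{xx}|^{2p})^{1/2}_{Q_{\kappa r/2}}$ by $(|v_{xx}|^{p})_{Q_{\kappa r}}$. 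The point is that the decomposition must be taken with respect to $L_0$, not $\bar{L}_0$, so that both the $w$-estimate and the reverse H\"older hypothesis for $v$ are tied to $L_0 u$ rather than to $\bar{L}_0 u$.
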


\begin{proof}
For given $u \in C_{0}^{\infty}(\bR^{d+1})$,
$\kappa \ge 16$, and $r \in (0, 1/\kappa]$,
find a unique function $\tilde{w} \in W_p^{1,2}((-3,4) \times \bR^d)$ satisfying
$\tilde{w}(4,x) = 0$ and
$$
L_0 \tilde{w} = f I_{Q_{\kappa r}},
$$
where $f : = L_0 u$.
This is possible by Theorem 2.2 and 2.5 in \cite{Doyoon&Krylov:parabolic:2006}.
In fact, $\tilde{w} \in W_q^{1,2}((-3,4) \times \bR^d)$ for all $q \in (2, \infty)\footnote{We may not be able to have $q = 2$ if $p \ne 2$
and the coefficients $a^{ij}$ are not independent of $x' \in \bR^{d-1}$.}$
because $f I_{Q_{\kappa r}} \in L_q((-3,4) \times \bR^d)$ for all $q > 2$.
Let
$$
w(t,x) = \eta(t) \tilde{w}(t,x),
$$
where
$\eta(t)$ is an infinitely differentiable function defined on $\bR$
such that
$$
\eta(t) = 1, \quad -1 \le t \le 2,
\quad
\eta(t) = 0, \quad t \le -2 \quad \text{or} \quad t \ge 3.
$$
We see that $w \in W_p^{1,2}(\bR^{d+1})$
and, in addition, $w \in W_q^{1,2}(\bR^{d+1})$ for all $q \in (2, \infty)$.
From the estimates from Theorem 2.2 and 2.5 in \cite{Doyoon&Krylov:parabolic:2006}
we have
$$
\int_{Q_{\kappa r}} \left( |w_{t}|^p + |w_{xx}|^p \right) \, dx \, dt
\le \int_{(-3,4) \times \bR^d} \left( |\tilde{w}_{t}|^p + |\tilde{w}_{xx}|^p \right) \, dx \, dt
$$
$$
\le N\int_{Q_{\kappa r}} |f|^p \, dx \, dt,
$$
where $N$ depends only on $d$, $\delta$, $p$, and $\omega$
(it also depends on the time interval, but the time interval here is fixed as $(-3,4)$).
Thus
\begin{equation}							\label{04022007_02}
\left( |w_{t}|^p + |w_{xx}|^p \right)_{Q_{\kappa r}}
\le N \left( |f|^p \right)_{Q_{\kappa r}},
\end{equation}
\begin{equation}							\label{04022007_03}
\left( |w_{t}|^p + |w_{xx}|^p \right)_{Q_{r}}
\le N \kappa^{d+2} \left( |f|^p \right)_{Q_{\kappa r}}.
\end{equation}
where $N = N(d,\delta,p,\omega)$.

Now we set
$$
v = u - w.
$$
Then $v \in W_{p}^{1,2}(\bR^{d+1})$,
$v \in W_{q}^{1,2}(\bR^{d+1})$, $q \in (2, \infty)$,
and 
$$
L_0 v = 0 \quad \text{in} \quad Q_{\kappa r}.
$$

Let
$$
\bar{L}_0 = \frac{\partial}{\partial t}
+ \bar{a}^{ij}(x^1) \frac{\partial^2}{\partial x^i \partial x^j},
$$
where
$$
\bar{a}^{ij}(x^1) = 
\dashint_{\Gamma_{\kappa r/2}} a^{ij}(s,x^1,y') \, dy'\, ds.
$$
Since $v \in W_{p}^{1,2}(\bR^{d+1})$
and $\kappa/2 \ge 8$,
by Theorem \ref{theorem04022007_01}
applied to the operator $\bar{L}_0$,
we have
$$
\dashint_{Q_r} | v_{t}(t,x) - \left( v_{t} \right)_{Q_r} |^p
\, dx \, dt
+
\dashint_{Q_r} | v_{xx'}(t,x) - \left( v_{xx'} \right)_{Q_r} |^p
\, dx \, dt
$$
$$
\le N \kappa^{d+2} \left( |\bar{L}_0 v|^p \right)_{Q_{\kappa r/2}}
+ N \kappa^{-p} \left( |v_{xx}|^p \right)_{Q_{\kappa r/2}}.
$$
Using the fact that $L_0 v = 0$ in $Q_{\kappa r}$, 
we have
$$
\left( |\bar{L}_0 v|^p \right)_{Q_{\kappa r/2}}
= \dashint_{Q_{\kappa r/2}}
| \left(\bar{a}^{ij}(x^1) - a^{ij}(t,x) \right) v_{x^i x^j} |^p
\, dx \, dt
$$
$$
\le
\left( \dashint_{Q_{\kappa r/2}} |\bar{a}^{ij}(x^1) - a^{ij}(t,x)|^{2p} \, dx \, dt \right)^{1/2}
\left( \dashint_{Q_{\kappa r/2}} |v_{x^i x^j}|^{2p} \, dx \, dt \right)^{1/2},
$$
where we see 
$$
\dashint_{Q_{\kappa r/2}} |\bar{a}^{ij}(x^1) - a^{ij}(t,x)|^{2p} \, dx \, dt
\le 
N \dashint_{Q_{\kappa r/2}} |\bar{a}^{ij}(x^1) - a^{ij}(t,x)|\, dx \, dt
$$
$$
\le 
N a_{\kappa r/2}^{\#(t,x')}.
$$
From Lemma \ref{lemma04022007} we also see
$$
\left( \dashint_{Q_{\kappa r/2}} |v_{x^i x^j}|^{2p} \, dx \, dt \right)^{1/2}
\le
N(d,p,\delta,\omega)
\left( \dashint_{Q_{\kappa r}} |v_{xx}|^{p} \, dx \, dt \right).
$$
Hence
$$
\dashint_{Q_r} | v_{t}(t,x) - \left( v_{t} \right)_{Q_r} |^p
\, dx \, dt
+
\dashint_{Q_r} | v_{xx'}(t,x) - \left( v_{xx'} \right)_{Q_r} |^p
\, dx \, dt
$$
$$
\le N \left(\kappa^{-p} + \kappa^{d+2} 
\hat{a}^{1/2} \right) \left( |v_{xx}|^p \right)_{Q_{\kappa r}}.
$$
Note that
$$
\left( |v_{xx}|^p \right)_{Q_{\kappa r}}
\le N \left( |u_{xx}|^p \right)_{Q_{\kappa r}}
+ N \left( |w_{xx}|^p \right)_{Q_{\kappa r}}
$$
$$
\le N \left( |u_{xx}|^p \right)_{Q_{\kappa r}}
+ N \left( |f|^p \right)_{Q_{\kappa r}},
$$
where the second inequality is due to \eqref{04022007_02}.
Also note that, using the inequality \eqref{04022007_03},
$$
\dashint_{Q_r}| w_{xx'}(t,x) - \left( w_{xx'} \right)_{Q_r} |^p \, dx \, dt
\le N \left( |w_{xx'}|^p \right)_{Q_r}
\le N \kappa^{d+2} \left( |f|^p \right)_{Q_{\kappa r}},
$$
$$
\dashint_{Q_r}| w_{t}(t,x) - \left( w_{t} \right)_{Q_r} |^p \, dx \, dt
\le N \left( |w_{t}|^p \right)_{Q_r}
\le N \kappa^{d+2} \left( |f|^p \right)_{Q_{\kappa r}}.
$$
Therefore,
$$
\dashint_{Q_r}
| u_{xx'}(t,x) - \left( u_{xx'} \right)_{Q_r} |^p \, dx \, dt
$$
$$
\le
N \dashint_{Q_r}
| v_{xx'}(t,x) - \left( v_{xx'} \right)_{Q_r} |^p \, dx \, dt
+ 
N \dashint_{Q_r}
| w_{xx'}(t,x) - \left( w_{xx'} \right)_{Q_r} |^p \, dx \, dt
$$
$$
\le N \left(\kappa^{-p} + \kappa^{d+2} 
\hat{a}^{1/2} \right)
\left( |u_{xx}|^p + |f|^p \right)_{Q_{\kappa r}}
+ N \kappa^{d+2} \left( |f|^p \right)_{Q_{\kappa r}}.
$$
Similarly, we have
$$
\dashint_{Q_r}
| u_{t}(t,x) - \left( u_{t} \right)_{Q_r} |^p \, dx \, dt
$$
$$
\le N \left(\kappa^{-p} + \kappa^{d+2} 
\hat{a}^{1/2} \right)
\left( |u_{xx}|^p + |f|^p \right)_{Q_{\kappa r}}
+ N \kappa^{d+2} \left( |f|^p \right)_{Q_{\kappa r}}.
$$
The theorem is proved.
\end{proof}

If $g$ is a function defined on $\bR$,
by $(g)_{(\sfa,\sfb)}$ we mean
$$
(g)_{(\sfa, \sfb)} = \dashint_{(\sfa, \sfb)} g(s) \, ds
= (\sfb - \sfa)^{-1} \int_{\sfa}^{\sfb} g(s) \, ds.
$$
The maximal and sharp function of $g$ are defined by
$$
M g (t) = \sup_{t \in (\sfa, \sfb)} \dashint_{(\sfa, \sfb)} |g(s)| \, ds,
$$
$$
g^{\#} (t) = \sup_{t \in (\sfa, \sfb)} \dashint_{(\sfa, \sfb)} | g(s) - (g)_{(\sfa, \sfb)} | \, ds,
$$
where the supremums are taken over all intervals $(\sfa, \sfb)$
containing $t$.

The following corollary follows from Theorem \ref{theorem04082007} above
and the argument in the proof of Corollary 3.2 in \cite{Krylov_2007_mixed_VMO}.

\begin{corollary}							\label{corollary04092007}
Let $p \ge 2$.
In case $p = 2$, we assume that the coefficients $a^{ij}(t,x)$ of $L_0$ are independent of $x' \in \bR^{d-1}$.
Then there exists a constant $N$, depending on $d$, $p$, $\delta$, and 
the function $\omega$, such that,
for any $u \in C_0^{\infty}(\bR^{d+1})$, $\kappa \ge 16$,
and $r \in (0, 1/\kappa]$,
we have
$$
\dashint_{(0,r^2)} \left| \phi(t) - (\phi)_{(0,r^2)} \right|^p \, dt 
+ \dashint_{(0,r^2)} \left| \varphi(t) - (\varphi)_{(0,r^2)} \right|^p \, dt
$$
$$
\le N \kappa^{d+2} (\psi^p)_{(0,(\kappa r)^2)}
+ N \left( \kappa^{-p} + \kappa^{d+2} \hat{a}^{1/2} \right) (\zeta^p)_{(0,(\kappa r)^2)},
$$
where $\hat{a} = a_{\kappa r}^{\#(t,x')}$,
$$
\phi(t) = \| u_{t}(t, \cdot) \|_{L_p(\bR^d)},
\quad
\varphi(t) = \| u_{xx'}(t, \cdot) \|_{L_p(\bR^d)},
$$
$$
\zeta(t) = \| u_{xx}(t, \cdot) \|_{L_p(\bR^d)},
\quad
\psi(t) = \| L_{0} u(t, \cdot) \|_{L_p(\bR^d)}.
$$
\end{corollary}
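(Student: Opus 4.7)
The plan is to derive Corollary \ref{corollary04092007} from Theorem \ref{theorem04082007} by applying the theorem to the family of spatially translated cubes $Q_r(0,x_0)$, $x_0\in\bR^d$, and then integrating in $x_0\in\bR^d$. This is legitimate because Assumption \ref{assum_03} is translation invariant in $x$, so the constants produced by Theorem \ref{theorem04082007} do not depend on $x_0$.

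\textbf{Step 1: reduction to pointwise-in-$x$ temporal oscillations.} Set $I=(0,r^2)$. Minkowski's inequality gives $|\phi(t)-\phi(s)|\le \|u_t(t,\cdot)-u_t(s,\cdot)\|_{L_p(\bR^d)}$, and combining this with Jensen (applied to $|a-(a)_I|^p\le\dashint_I|a-b|^p\,db$), Fubini, and the elementary bound $\dashint_I\dashint_I|g(t)-g(s)|^p\,ds\,dt\le 2^p\dashint_I|g-(g)_I|^p\,dt$ yields
$$
\dashint_I|\phi(t)-(\phi)_I|^p\,dt\le 2^p\int_{\bR^d}\dashint_I|u_t(t,x)-(u_t)_I(x)|^p\,dt\,dx,
$$
where $(u_t)_I(x):=\dashint_I u_t(\tau,x)\,d\tau$, together with the analogous inequality for $\varphi$ and $u_{xx'}$.

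\textbf{Step 2: assembling Theorem \ref{theorem04082007} over $x_0\in\bR^d$.} For each $x_0$, Theorem \ref{theorem04082007} centered at $(0,x_0)$ bounds $\dashint_{Q_r(0,x_0)}|u_t-(u_t)_{Q_r(0,x_0)}|^p\,dx\,dt$ (and the corresponding $u_{xx'}$-oscillation) by $N\kappa^{d+2}(|L_0u|^p)_{Q_{\kappa r}(0,x_0)}+N(\kappa^{-p}+\kappa^{d+2}\hat a^{1/2})(|u_{xx}|^p)_{Q_{\kappa r}(0,x_0)}$. Applying $\dashint_I|g-(g)_I|^p\,dt\le 2^p\dashint_I|g-c|^p\,dt$ pointwise in $x$ with $c=(u_t)_{Q_r(0,x_0)}$ and then averaging over $x\in B_r(x_0)$ produces
$$
\dashint_{B_r(x_0)}\dashint_I|u_t(t,x)-(u_t)_I(x)|^p\,dt\,dx\le 2^p\dashint_{Q_r(0,x_0)}|u_t-(u_t)_{Q_r(0,x_0)}|^p\,dx\,dt.
$$
Integrating in $x_0\in\bR^d$ and invoking Fubini turns the left-hand side into $\int_{\bR^d}\dashint_I|u_t-(u_t)_I(x)|^p\,dt\,dx$, while on the right a direct Fubini computation yields $\int_{\bR^d}(|L_0u|^p)_{Q_{\kappa r}(0,x_0)}\,dx_0=(\psi^p)_{(0,(\kappa r)^2)}$ and $\int_{\bR^d}(|u_{xx}|^p)_{Q_{\kappa r}(0,x_0)}\,dx_0=(\zeta^p)_{(0,(\kappa r)^2)}$ (the factor $|B_{\kappa r}|$ from each cube cancels when the outer $x_0$-integral sweeps the indicator of $B_{\kappa r}(x_0)$).

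Chaining Steps 1 and 2 (and running the identical argument for $\varphi$ and $u_{xx'}$) gives exactly the inequality of Corollary \ref{corollary04092007}. The main delicacy is bridging the LHS of Theorem \ref{theorem04082007}, which is an oscillation around the spatio-temporal mean $(u_t)_{Q_r(0,x_0)}$, to the LHS of Corollary \ref{corollary04092007}, which is a purely temporal oscillation of an $L_p(\bR^d)$-norm. The freedom to pick $c$ as the $x_0$-dependent constant $(u_t)_{Q_r(0,x_0)}$ in the oscillation inequality is what lets the $x_0$-integration absorb the ball $B_r(x_0)$ into $\bR^d$ cleanly, preserving the favorable powers of $\kappa$ and $\hat a^{1/2}$ on the right.
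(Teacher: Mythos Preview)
Your argument is correct and follows essentially the same route the paper intends: the paper simply cites Theorem \ref{theorem04082007} together with the proof of Corollary 3.2 in \cite{Krylov_2007_mixed_VMO}, and that argument is precisely the one you carry out---apply the local mean-oscillation estimate on $Q_r(0,x_0)$ for each $x_0\in\bR^d$, then integrate in $x_0$ so that the spatial averages collapse by Fubini. Minor remark: what you call ``Minkowski's inequality'' in Step 1 is really the reverse triangle inequality for the $L_p$-norm, but the estimate itself is of course right.
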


The following two assertions
are similar to Lemma 3.3 and 3.4 in \cite{Krylov_2007_mixed_VMO}.
However, since our statements are a little bit different from those in \cite{Krylov_2007_mixed_VMO}, we present here proofs.

\begin{lemma}							\label{lemma04102007}
Let $p \ge 2$.
In case $p = 2$, we assume that the coefficients $a^{ij}(t,x)$ of $L_0$ are independent of $x' \in \bR^{d-1}$.
Let $R \in (0,1]$ and $u$ be a function in $C_0^{\infty}(\bR^{d+1})$
such that $u(t,x) = 0$ for $t \notin (0,R^4)$.
Then
$$
\phi^{\#}(t_0) + \varphi^{\#}(t_0)
\le N \kappa^{(d+2)/p} \left(M \psi^p (t_0)\right)^{1/p}
+ N (\kappa R)^{2-2/p} \left(M \phi^p (t_0)\right)^{1/p}
$$
$$
+ N \left( (\kappa R)^{2-2/p} + \kappa^{-1} + \kappa^{(d+2)/p} \left(\omega(R)\right)^{1/2p}   \right) \left(M \zeta^p (t_0)\right)^{1/p}
$$
for all $\kappa \ge 16$ and $t_0 \in \bR$,
where $N = N(d, p, \delta,\omega)$
and the functions $\phi$, $\varphi$, $\zeta$, $\psi$
are defined as in Corollary \ref{corollary04092007}.
\end{lemma}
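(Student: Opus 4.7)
The plan is to bound $\phi^{\#}(t_0)$ and $\varphi^{\#}(t_0)$ by handling each averaging interval $(\sfa,\sfb)\ni t_0$ (with $r:=\sqrt{\sfb-\sfa}$) in two regimes determined by whether $r\le R/\kappa$ or $r>R/\kappa$, and then taking the supremum. I may assume without loss of generality that $\omega$ is non-decreasing (replacing it otherwise by $\sup_{s\le\,\cdot}\omega(s)$).

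\textbf{Regime 1: $r\le R/\kappa$.} Since $R\le 1$ this forces $r\le 1/\kappa$, so Corollary \ref{corollary04092007} (used in its translation-invariant form, which is legitimate because Assumption \ref{assum_03} is invariant under translations in $(t,x)$) applies to $(\sfa,\sfa+r^2)$. Because $\kappa r\le R$ and $\omega$ is non-decreasing, $\hat a=a^{\#(t,x')}_{\kappa r}\le\omega(\kappa r)\le\omega(R)$. Taking $p$-th roots of the corollary's $L^p$-oscillation bound, applying $(a+b)^{1/p}\le a^{1/p}+b^{1/p}$ to split the right-hand side, and invoking Jensen's inequality to pass from $L^p$ to $L^1$ oscillation, I obtain for each of $\phi$ and $\varphi$
$$
\dashint_{(\sfa,\sfb)}|\phi-(\phi)_{(\sfa,\sfb)}|\,dt\le N\kappa^{(d+2)/p}\bigl((\psi^p)_{(\sfa,\sfa+(\kappa r)^2)}\bigr)^{1/p}+N\bigl(\kappa^{-1}+\kappa^{(d+2)/p}\omega(R)^{1/(2p)}\bigr)\bigl((\zeta^p)_{(\sfa,\sfa+(\kappa r)^2)}\bigr)^{1/p},
$$
and since $t_0\in(\sfa,\sfa+(\kappa r)^2)$, the averages on the right are dominated by $M\psi^p(t_0)$ and $M\zeta^p(t_0)$ respectively. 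The same estimate holds for $\varphi$ in place of $\phi$.

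\textbf{Regime 2: $r>R/\kappa$.} The support assumption forces $\phi\equiv\varphi\equiv 0$ outside $[0,R^4]$. H\"older's inequality gives
$$
\int_{(\sfa,\sfb)}|\phi|\,dt\le|\mathrm{supp}\,\phi\cap(\sfa,\sfb)|^{1/p'}\!\!\left(\int_{(\sfa,\sfb)}|\phi|^p\,dt\right)^{1/p}\le R^{4/p'}\!\!\left(\int_{(\sfa,\sfb)}|\phi|^p\,dt\right)^{1/p}.
$$
Dividing by $r^2$ and using $2-2/p=2/p'$ yields
$$
\dashint_{(\sfa,\sfb)}|\phi|\,dt\le(R^2/r)^{2-2/p}\bigl(M\phi^p(t_0)\bigr)^{1/p}\le(\kappa R)^{2-2/p}\bigl(M\phi^p(t_0)\bigr)^{1/p},
$$
where the last inequality uses $r>R/\kappa$. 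Hence $\dashint|\phi-(\phi)|\,dt\le 2(\kappa R)^{2-2/p}(M\phi^p(t_0))^{1/p}$, and the pointwise bound $\varphi(t)\le\zeta(t)$ yields the analogous estimate with $M\zeta^p$ on the right.

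Adding the contributions from the two regimes and taking the supremum over $(\sfa,\sfb)\ni t_0$ produces precisely the three terms in the stated bound. The key design choice is the threshold $r=R/\kappa$: it is simultaneously the largest $r$ for which Corollary \ref{corollary04092007} applies with $\omega(\kappa r)\le\omega(R)$, and the smallest $r$ for which the support factor $(R^2/r)^{2-2/p}$ from the H\"older step can be cleanly absorbed into $(\kappa R)^{2-2/p}$. The one technical point to verify carefully is the translation invariance of Corollary \ref{corollary04092007}, which follows because Theorem \ref{theorem04082007} (from which it descends) is itself built on the translation-invariant quantities $\cO_R^{(t,x')}$ and the uniform Assumption \ref{assum_03}.
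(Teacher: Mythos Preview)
Your proof is correct and follows essentially the same approach as the paper: split intervals $(\sfa,\sfb)\ni t_0$ according to whether $\sfb-\sfa\le R^2/\kappa^2$ (where Corollary~\ref{corollary04092007} applies with $\hat a\le\omega(R)$) or $\sfb-\sfa>R^2/\kappa^2$ (where H\"older against the indicator of the support yields the $(\kappa R)^{2-2/p}$ factor), then take the supremum. The only cosmetic differences are your explicit invocation of Jensen and your WLOG monotonicity of $\omega$; the paper instead uses directly that $a^{\#(t,x')}_{\kappa r}\le a^{\#(t,x')}_R$ from the definition of $\cO_R^{(t,x')}$ as a supremum over $r\le R$.
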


\begin{proof}
Take a $\kappa$ such that $\kappa \ge 16$.
If $r \le R/\kappa$,
then $\kappa r \le R \le 1$ and $a^{\#(t,x')}_{\kappa r} \le a^{\#(t,x')}_{R}
\le \omega(R)$.
Thus by Corollary \ref{corollary04092007},
$$
\dashint_{(0,r^2)} \left| \phi(t) - (\phi)_{(0,r^2)} \right|^p \, dt 
+ \dashint_{(0,r^2)} \left| \varphi(t) - (\varphi)_{(0,r^2)} \right|^p \, dt
$$
$$
\le N \kappa^{d+2} (\psi^p)_{(0,(\kappa r)^2)}
+ N \left( \kappa^{-p} + \kappa^{d+2} \left(\omega(R)\right)^{1/2} \right) (\zeta^p)_{(0,(\kappa r)^2)}.
$$
An appropriate translation of this inequality
gives us 
$$
\dashint_{(\sfa,\sfb)} \left| \phi(t) - (\phi)_{(\sfa,\sfb)} \right|^p \, dt 
+ \dashint_{(\sfa,\sfb)} \left| \varphi(t) - (\varphi)_{(\sfa,\sfb)} \right|^p \, dt
$$
$$
\le N \kappa^{d+2} (\psi^p)_{(\sfa,\sfc)}
+ N \left( \kappa^{-p} + \kappa^{d+2} \left(\omega(R)\right)^{1/2} \right) (\zeta^p)_{(\sfa,\sfc)}
$$
if $(\sfa, \sfb)$ is an interval such that
$\sfb - \sfa \le R^2/\kappa^2$
and $\sfc = \sfa + \kappa^2 (\sfb - \sfa)$.
Note that, for $t_0 \in (\sfa, \sfb)$,
$$
(\psi^p)_{(\sfa,\sfc)}
\le M \psi^p (t_0),
\quad
(\zeta^p)_{(\sfa,\sfc)}
\le M \zeta^p (t_0).
$$
Thus by using the H\"{o}lder's inequality it follows that
$$
\dashint_{(\sfa,\sfb)} \left| \phi(t) - (\phi)_{(\sfa,\sfb)} \right| \, dt 
+ \dashint_{(\sfa,\sfb)} \left| \varphi(t) - (\varphi)_{(\sfa,\sfb)} \right| 
$$
$$
\le N \kappa^{(d+2)/p} \left( M \psi^p (t_0) \right)^{1/p}
+ N \left( \kappa^{-1} + \kappa^{(d+2)/p} \left(\omega(R)\right)^{1/2p} \right) \left( M \zeta^p (t_0)\right)^{1/p},
$$
where $t_0 \in (\sfa, \sfb)$ and $\sfb - \sfa \le R^2/\kappa^2$.
Now take an interval $(\sfa, \sfb)$ such that $t_0 \in (\sfa, \sfb)$
and $\sfb - \sfa > R^2/\kappa^2$.
Then
$$
\dashint_{(\sfa,\sfb)} \left| \phi(t) - (\phi)_{(\sfa,\sfb)} \right| \, dt 
\le 2 \dashint_{(\sfa,\sfb)} I_{(0,R^4)}(t) \left| \phi(t) \right| \, dt 
$$
$$
\le 2 \left(\dashint_{(\sfa,\sfb)} I_{(0,R^4)}(t) \, dt \right)^{1-1/p}
\left(\dashint_{(\sfa,\sfb)} \left| \phi(t) \right|^p \, dt \right)^{1/p}
$$
$$
\le 2 \left(R^4(\sfb - \sfa)^{-1}\right)^{1-1/p}
\left( M \phi^p (t_0) \right)^{1/p}
\le 2 \left(\kappa R\right)^{2-2/p} \left( M \phi^p (t_0) \right)^{1/p}.
$$
By a similar calculation, we obtain
$$
\dashint_{(\sfa,\sfb)} \left| \varphi(t) - (\varphi)_{(\sfa,\sfb)} \right| \, dt
\le 2 \left(\kappa R\right)^{2-2/p} \left( M \varphi^p (t_0) \right)^{1/p}
$$
$$
\le N \left(\kappa R\right)^{2-2/p} \left( M \zeta^p (t_0) \right)^{1/p}.
$$
Therefore,
for all intervals $(\sfa, \sfb) \ni t_0$,
$$
\dashint_{(\sfa,\sfb)} \left| \phi(t) - (\phi)_{(\sfa,\sfb)} \right| \, dt 
+ \dashint_{(\sfa,\sfb)} \left| \varphi(t) - (\varphi)_{(\sfa,\sfb)} \right| 
$$
$$
\le N \kappa^{(d+2)/p} \left( M \psi^p (t_0) \right)^{1/p}
+ N \left(\kappa R\right)^{2-2/p} \left( M \phi^p (t_0) \right)^{1/p}
$$
$$
+ N \left(\left(\kappa R\right)^{2-2/p} + \kappa^{-1} + \kappa^{(d+2)/p} \left(\omega(R)\right)^{1/2p} \right) \left( M \zeta^p (t_0)\right)^{1/p}.
$$
Taking the supremum of the left-hand side of the above inequality
over all intervals $(\sfa, \sfb) \ni t_0$, we obtain the inequality in the lemma.
The lemma is proved.
\end{proof}

\begin{corollary}							\label{corollary04102007}
Let $q > p \ge 2$.
Assume that, in case $p = 2$,
the coefficients $a^{ij}$ of $L_0$
are independent of $x' \in \bR^{d-1}$.
Then there exists $R = R(d, p, q, \delta, \omega)$
such that,
for any $u \in C_0^{\infty}(\bR^{d+1})$
satisfying $u(t,x) = 0$ for $t \notin (0, R^4)$,
$$
\|u_t\|_{L_{q,p}}
+ \|u_{xx}\|_{L_{q,p}}
\le N \| L_0 u \|_{L_{q,p}},
$$
where $N = N(d, p, q, \delta, \omega)$.
\end{corollary}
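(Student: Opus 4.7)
The plan is to convert the pointwise sharp-function estimate of Lemma~\ref{lemma04102007} into an $L_q(\bR)$ inequality for the time-slice norms, and then absorb the error terms by choosing $\kappa$ large and $R$ small. Throughout, observe that
$$
\|u_t\|_{L_{q,p}} = \|\phi\|_{L_q(\bR)},
\quad
\|u_{xx'}\|_{L_{q,p}} = \|\varphi\|_{L_q(\bR)},
$$
$$
\|u_{xx}\|_{L_{q,p}} \sim \|\zeta\|_{L_q(\bR)},
\quad
\|L_0 u\|_{L_{q,p}} = \|\psi\|_{L_q(\bR)}.
$$

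First, since $q>p\ge 2$, I would apply the one-dimensional Fefferman--Stein sharp function theorem on $\bR$ to the functions $\phi,\varphi\in L_q(\bR)$ (they are compactly supported in $t$ because $u$ is) to obtain $\|\phi\|_{L_q}+\|\varphi\|_{L_q}\le N\|\phi^{\#}+\varphi^{\#}\|_{L_q}$. Then I would plug in the pointwise bound of Lemma~\ref{lemma04102007} and take $L_q(\bR)$ norms. Because $q/p>1$, the Hardy--Littlewood maximal inequality gives $\|(M g^p)^{1/p}\|_{L_q(\bR)}\le N\|g\|_{L_q(\bR)}$ for $g=\psi,\phi,\zeta$. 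This yields, for every $\kappa\ge 16$ and every admissible $R\in(0,1]$,
$$
\|u_t\|_{L_{q,p}}+\|u_{xx'}\|_{L_{q,p}}
\le N\kappa^{(d+2)/p}\|L_0u\|_{L_{q,p}}
+ N(\kappa R)^{2-2/p}\|u_t\|_{L_{q,p}}
+ N\Theta\,\|u_{xx}\|_{L_{q,p}},
$$
where $\Theta:=(\kappa R)^{2-2/p}+\kappa^{-1}+\kappa^{(d+2)/p}\,\omega(R)^{1/(2p)}$.

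Second, the left-hand side controls $u_{xx'}$ but not the pure derivative $u_{x^1x^1}$. To recover it, I use uniform ellipticity: since $a^{11}\ge\delta>0$, the equation gives
$$
u_{x^1x^1}
= \frac{1}{a^{11}}\Bigl(L_0u - u_t - 2\!\!\sum_{j\ge 2} a^{1j}u_{x^1x^j} - \!\!\sum_{i,j\ge 2} a^{ij}u_{x^ix^j}\Bigr),
$$
and hence $\|u_{xx}\|_{L_{q,p}}\le N(\|L_0u\|_{L_{q,p}}+\|u_t\|_{L_{q,p}}+\|u_{xx'}\|_{L_{q,p}})$ with $N=N(d,\delta)$. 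Substituting this into the bound above gives
$$
\|u_t\|_{L_{q,p}}+\|u_{xx'}\|_{L_{q,p}}
\le N\kappa^{(d+2)/p}\|L_0u\|_{L_{q,p}}
+ N\bigl((\kappa R)^{2-2/p}+\Theta\bigr)\bigl(\|u_t\|_{L_{q,p}}+\|u_{xx'}\|_{L_{q,p}}\bigr).
$$

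The final step is a two-parameter smallness argument. Since $p\ge 2$ gives $2-2/p\ge 0$ and $\omega(R)\to 0$ as $R\to 0$, I first fix $\kappa$ large enough that $N\kappa^{-1}\le 1/8$, then choose $R=R(d,p,q,\delta,\omega)\in(0,1/\kappa]$ small enough that $N((\kappa R)^{2-2/p}+\kappa^{(d+2)/p}\omega(R)^{1/(2p)})\le 1/8$. The error term is then $\le\tfrac12(\|u_t\|_{L_{q,p}}+\|u_{xx'}\|_{L_{q,p}})$, which can be absorbed into the left side, leaving $\|u_t\|_{L_{q,p}}+\|u_{xx'}\|_{L_{q,p}}\le N\|L_0u\|_{L_{q,p}}$. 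One more application of the ellipticity inequality then yields the claimed estimate for $\|u_t\|_{L_{q,p}}+\|u_{xx}\|_{L_{q,p}}$.

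The main technical point is ensuring that the Fefferman--Stein and Hardy--Littlewood theorems apply: this is precisely why $q>p$ is assumed (so that $q/p>1$), and why we measure time slices in $L_p(\bR^d)$. The exponent $2-2/p\ge 0$ is nonnegative only for $p\ge 2$, which keeps the $(\kappa R)^{2-2/p}$ term finite and controllable. Once the maximal-function machinery is set up, the absorption argument is routine; the order of the quantifiers (fix $\kappa$ first, then $R$) is the only subtle bookkeeping issue.
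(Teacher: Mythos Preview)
Your argument is correct and follows essentially the same route as the paper: apply Fefferman--Stein and Hardy--Littlewood (using $q/p>1$) to convert the sharp-function estimate of Lemma~\ref{lemma04102007} into an $L_{q,p}$ bound on $u_t$ and $u_{xx'}$, recover $u_{x^1x^1}$ via the ellipticity relation $u_{x^1x^1}=(a^{11})^{-1}(L_0u-u_t-\sum_{(i,j)\ne(1,1)}a^{ij}u_{x^ix^j})$, and then fix a large $\kappa$ followed by a small $R$ to absorb the error terms. The only cosmetic difference is that the paper replaces the left-hand side by $\|u_t\|_{L_{q,p}}+\|u_{xx}\|_{L_{q,p}}$ before absorbing, whereas you substitute the ellipticity bound into the right-hand side first; both orderings are equivalent.
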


\begin{proof}
Let $u \in C_0^{\infty}(\bR^{d+1})$
be a function such that $u(t,x) = 0$ for $t \notin (0, R^4)$,
$R \in (0,1]$,
where $R$ will be specified below.
Using the inequality in Lemma \ref{lemma04102007}
as well as the Hardy-Littlewood theorem
and Fefferman-Stein theorem (note that $q/p > 1$),
we arrive at
$$
\|u_t\|_{L_{q,p}}
+ \|u_{xx'}\|_{L_{q,p}}
\le N \kappa^{(d+2)/p} \| L_0 u \|_{L_{q,p}}
+ N (\kappa R)^{2-2/p} \| u_t \|_{L_{q,p}}
$$
$$
+ N \left( (\kappa R)^{2-2/p} + \kappa^{-1} + \kappa^{(d+2)/p} \left(\omega(R)\right)^{1/2p}   \right) \| u_{xx} \|_{L_{q,p}}
$$
for all $\kappa \ge 16$.
The left-hand side of the above inequality can be replaced by $\|u_t\|_{L_{q,p}}
+ \|u_{xx}\|_{L_{q,p}}$
since
$$
u_{x^1x^1} = \frac{1}{a^{11}} \left(L_0 u  - u_t - 
\sum_{i \ne 1, j \ne 1} a^{ij} u_{x^ix^j}\right).
$$
Now
we choose a large $\kappa$ and then a small $R$ such that
$$
N \left( (\kappa R)^{2-2/p} + \kappa^{-1} + \kappa^{(d+2)/p} \left(\omega(R)\right)^{1/2p}   \right)  < 1/2
$$
$$
N (\kappa R)^{2-2/p} < 1/2.
$$
It then follows that
$$
\|u_t\|_{L_{q,p}}
+ \|u_{xx}\|_{L_{q,p}}
\le 
2 N \kappa^{(d+2)/p} \| L_0 u \|_{L_{q,p}}.
$$
This finishes the proof.
~\end{proof}

Now that we have an $L_{q,p}$-estimate for
functions with compact support with respect to $t \in \bR$,
by repeating word for word the proofs in section 3 in \cite{Krylov_2007_mixed_VMO},
more precisely, proofs
of Lemma 3.4 and Theorem 3.5
in \cite{Krylov_2007_mixed_VMO},
we complete the proof of Theorem \ref{theorem04022007}.

\section{Equations with $a^{ij}$ measurable in $(t,x^1) \in \bR^2$}							\label{section04082007_01}

Throughout this section, we set
$$
\fL_{\lambda} u = u_t + a^{ij}(t,x^1) u_{x^i x^j} - \lambda u,
$$
where $\lambda \ge 0$
and $a^{ij}$ are functions of only $(t,x^1) \in \bR^2$,
$a^{11}(x^1)$ is a function of $x^1 \in \bR$,
satisfying Assumption \ref{assum_01}.

In this section we call $\fq_r$ the 1-spatial dimensional version of $Q_r$,
that is, 
$$
\fq_r(t,x^1) = (t, t+r^2) \times (x^1-r, x^1+r) \subset \bR \times \bR.
$$
Especially, $\fq_r = \fq_r(0,0)$.

As is seen in \cite{Doyoon:parabolic:2006},
one of key steps there is based on Theorem \ref{theorem04022007} in this paper.
Now that we have proved Theorem \ref{theorem04022007}, 
using the results in \cite{Doyoon:parabolic:2006}
as well as in \cite{Doyoon&Krylov:parabolic:2006},
we are able to state the following theorem.

\begin{theorem}							\label{theorem03262007}
Let $p \ge 2$ and $T \in [-\infty, \infty)$.
For any $\lambda>0$ and $f\in L_{p}((T, \infty) \times \bR^{d})$,
there exists a unique solution $u \in W^{1,2}_{p}((T, \infty) \times \bR^{d})$ 
to the equation
$\fL_{\lambda}u = f$. 
Furthermore, there is a constant $N=N(d, p, \delta)$
such that, for any $\lambda \ge 0$ and
$u \in W^{1,2}_{p}((T, \infty) \times \bR^{d})$,
we have
$$
\|u_{t}\|_{L_{p}((T, \infty) \times \bR^{d})}
+\|u_{xx}\|_{L_{p}((T, \infty) \times \bR^{d})}
+\sqrt{\lambda}\|u_{x}\|_{L_{p}((T, \infty) \times \bR^{d})}
$$
$$
+\lambda\|u\|_{L_{p}((T, \infty) \times \bR^{d})}
\le N\|\fL_{\lambda} u\|_{L_{p}((T, \infty) \times \bR^{d})}.
$$
\end{theorem}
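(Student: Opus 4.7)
The plan is to obtain this $L_p$-estimate by invoking the existing $L_p$-theories of \cite{Doyoon:parabolic:2006} and \cite{Doyoon&Krylov:parabolic:2006}, in exactly the same spirit as Theorem \ref{theorem04012007}. The key structural point is that the coefficients of $\fL_\lambda$---being functions only of $(t,x^1)$, with $a^{11}$ a function only of $x^1$---are trivially VMO in $x'$, and $a^{11}$ is trivially VMO in $(t,x')$. Hence $\fL_\lambda$ lies squarely inside the hypothesis framework of \cite{Doyoon:parabolic:2006}, whose main theorem is the $L_p$-analogue of Theorem \ref{theorem03192007}, so that the conclusion should follow by citation plus a scaling argument.

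First I would dispose of $T=-\infty$. When $p=2$, existence, uniqueness and the a priori estimate with $\lambda_0=0$ are immediate from Theorem 3.2 in \cite{Doyoon&Krylov:parabolic:2006}, since the coefficients do not depend on $x'$. When $p>2$, the main result of \cite{Doyoon:parabolic:2006} applied to $\fL_\lambda$ yields unique solvability in $W_p^{1,2}(\bR^{d+1})$ for all $\lambda>\lambda_0$ and the a priori estimate $\|u_t\|_p+\|u_{xx}\|_p\le N\|\fL_\lambda u\|_p$ for $\lambda\ge\lambda_0$, where $\lambda_0=\lambda_0(d,p,\delta)\ge 0$, after which the terms $\sqrt\lambda\|u_x\|_p$ and $\lambda\|u\|_p$ are incorporated by standard interpolation and the relation $\lambda u=\fL_\lambda u-u_t-a^{ij}u_{x^ix^j}$.

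Next I would push $\lambda_0$ down to $0$ by the parabolic scaling argument indicated in the footnote of Theorem \ref{theorem04012007}. For $r>0$, the map $\hat u(t,x):=u(r^2 t,rx)$ carries $\fL_\lambda u=f$ into an equation of the same structural form: the rescaled coefficients $\hat a^{ij}(t,x^1)=a^{ij}(r^2 t,rx^1)$ satisfy Assumption \ref{assum_01} with the same $\delta$, $\hat a^{11}$ still depends only on $x^1$, and $\lambda$ is replaced by $r^2\lambda$. A direct change-of-variables calculation shows that $\|\hat u_t\|_p$, $\|\hat u_{xx}\|_p$, $\sqrt{r^2\lambda}\|\hat u_x\|_p$, $r^2\lambda\|\hat u\|_p$, and $\|\hat{\fL}_{r^2\lambda}\hat u\|_p$ all carry the common factor $r^{2-(d+2)/p}$, so that the estimate for the scaled equation at level $r^2\lambda$ is equivalent to the estimate for the original at level $\lambda$. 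Choosing $r$ with $r^2\lambda\ge\lambda_0$ then produces the estimate for every $\lambda>0$ with the same $N=N(d,p,\delta)$, and the case $\lambda=0$ drops out by passing to the limit (the two lower-order terms vanish). Existence for the missing range $0<\lambda\le\lambda_0$ is then recovered by the method of continuity along $\mu\in[\lambda,\lambda_0+1]$, using the now uniform a priori estimate.

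Finally, the passage from $T=-\infty$ to arbitrary $T\in(-\infty,\infty)$ is the same extension/truncation device used after Corollary 5.14 in \cite{Krylov_2007_mixed_VMO} and already invoked to finish Theorem \ref{theorem04012007} above. The main obstacle in this program is not computational but bookkeeping: one has to verify that the construction of \cite{Doyoon:parabolic:2006}---which itself rests on Theorem \ref{theorem04022007} of the present paper as one of its key ingredients---is in place as a black box, since without that input the case $p>2$ has no $L_p$-foundation from which to scale. Once granted, the remainder of the argument is routine rescaling and continuity.
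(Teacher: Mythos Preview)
Your proposal is correct and follows essentially the same route as the paper: for $p=2$ invoke Theorem~3.2 of \cite{Doyoon&Krylov:parabolic:2006}, for $p>2$ invoke the $L_p$-theory of \cite{Doyoon:parabolic:2006} (the paper pinpoints Corollary~4.2 there, together with the argument in the proof of Theorem~4.1 of \cite{Krylov_2005}), then use the parabolic dilation to remove the threshold $\lambda_0$ and the extension device after Corollary~5.14 of \cite{Krylov_2007_mixed_VMO} to pass to finite $T$. You also correctly flag the logical dependency---that \cite{Doyoon:parabolic:2006} rests on Theorem~\ref{theorem04022007} of the present paper---which is exactly the point the paper makes in the paragraph preceding the theorem.
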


More precisely, this theorem follows, in case $p = 2$, from
Theorem 3.2 in \cite{Doyoon&Krylov:parabolic:2006}
and, in case $p > 2$,
from
Corollary 4.2 in \cite{Doyoon:parabolic:2006}
as well as the argument in the proof of Theorem 4.1 in \cite{Krylov_2005}
(see also the discussion following Theorem \ref{theorem04012007}).

Based on the estimate in the above theorem,
we have the following lemmas which are similar to those in section \ref{section04082007}.
However, we do not have $D^{m}_t v$, $m \ge 2$, 
in Lemma \ref{lemma03202007_03}
because $a^{ij}$ are not independent of $t \in \bR$ except $a^{11}$.

\begin{lemma} \label{lemma03202007_01}
Let $p \in [2, \infty)$.
For any $u \in W_{p, \text{loc}}^{1,2}(\bR^{d+1})$,
we have
$$
\|u_t\|_{L_p(Q_r)}
+ \| u_{xx} \|_{L_p(Q_r)}
+ \| u_{x} \|_{L_p(Q_r)}
\le N \left( \| \fL_0 u \|_{L_p(Q_R)} + \| u \|_{L_p(Q_R)} \right),
$$
where $0 < r < R < \infty$ and $N = N(d, p, \delta, r, R)$.
\end{lemma}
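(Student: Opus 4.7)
The plan is to reduce the interior estimate to the global a priori bound already provided by Theorem \ref{theorem03262007}, via a standard cutoff argument on nested cylinders $Q_r \subset Q_\rho \subset Q_R$. This is the $L_p$-analogue of the $p=2$ argument in \cite{Doyoon&Krylov:parabolic:2006} and parallels the companion Lemma \ref{lemma04012007_01} (which handles $\cL_0$); the authors themselves flag this parallel.

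First I would fix an intermediate radius $\rho \in (r, R)$ and choose a smooth cutoff $\eta(t,x)$ with $\eta \equiv 1$ on $Q_r$, $\mathrm{supp}\,\eta \subset Q_\rho$, and $|\eta_t| + |\eta_x| + |\eta_{xx}| \le N(\rho-r)^{-2}$. Setting $v = u\eta$ and extending by zero, $v$ lies in $W_p^{1,2}(\bR^{d+1})$ (since $u \in W^{1,2}_{p,\mathrm{loc}}$ and $\eta$ has compact support) and vanishes outside a bounded time-interval, so Theorem \ref{theorem03262007} with $T = -\infty$ and any fixed $\lambda > 0$ is applicable to $v$. A direct computation gives
$$
\fL_\lambda v = \eta\,\fL_0 u + u\,\eta_t + 2 a^{ij} u_{x^j}\eta_{x^i} + a^{ij} u\,\eta_{x^i x^j} - \lambda u\eta,
$$
and, using Assumption \ref{assum_01} to bound the $a^{ij}$ pointwise together with the cutoff estimates, Theorem \ref{theorem03262007} applied to $v$ yields
$$
\|u_t\|_{L_p(Q_r)} + \|u_{xx}\|_{L_p(Q_r)} + \|u_x\|_{L_p(Q_r)} \le N\bigl(\|\fL_0 u\|_{L_p(Q_R)} + (\rho-r)^{-1}\|u_x\|_{L_p(Q_\rho)} + (\rho-r)^{-2}\|u\|_{L_p(Q_R)}\bigr),
$$
with $N = N(d,p,\delta)$.

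The main (and really only) obstacle is removing the $\|u_x\|_{L_p(Q_\rho)}$ on the right-hand side. I would handle this in the standard way: apply the displayed inequality with arbitrary radii $r \le s < s' \le R$ in place of $r < \rho$, combine with the elementary interpolation inequality $\|u_x\|_{L_p(Q_{s'})} \le \varepsilon \|u_{xx}\|_{L_p(Q_{s'})} + C_\varepsilon \|u\|_{L_p(Q_{s'})}$, and then invoke the nested-cylinders iteration lemma (the one that absorbs a small multiple of $\Phi(s')$ in an inequality of the form $\Phi(s) \le \alpha\Phi(s') + \mathrm{data}$) to produce a bound on $\Phi(r) := \|u_t\|_{L_p(Q_r)} + \|u_{xx}\|_{L_p(Q_r)} + \|u_x\|_{L_p(Q_r)}$ depending only on $\|\fL_0 u\|_{L_p(Q_R)} + \|u\|_{L_p(Q_R)}$, with $N$ depending additionally on $r$ and $R$. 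Everything else is routine bookkeeping.
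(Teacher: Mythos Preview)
Your proposal is correct and follows essentially the approach the paper indicates: the paper does not write out a proof of this lemma but says it is obtained, like its companion Lemma~\ref{lemma04012007_01}, by repeating the $L_2$ cutoff argument of \cite{Doyoon&Krylov:parabolic:2006} with $p$ in place of $2$, using the global estimate of Theorem~\ref{theorem03262007} as the input. Your cutoff--interpolation--iteration scheme is exactly that standard argument, so there is nothing to add.
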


\begin{lemma} \label{lemma03202007_03}
Let $p \in [2, \infty)$, $0 < r < R < \infty$, and
$\gamma = (\gamma^1, \cdots, \gamma^d)$ be a multi-index such that $\gamma^1 = 0, 1, 2$. Set $\gamma' = (0, \gamma^2, \cdots, \gamma^d)$.
If $v \in C_{\text{loc}}^{\infty}(\bR^{d+1})$ is a function such that $\fL_0 v = 0$ in $Q_R$, then 
\begin{equation}							\label{03262007_02}
\int_{Q_r} | D^{\gamma'} v_t |^p \, dx \, dt
+ \int_{Q_r} | D^{\gamma} v |^p \, dx \, dt
\le N 
\int_{Q_R} |v|^p \, dx \, dt,
\end{equation}
where $N = N(d, p, \delta, \gamma, r, R)$.
\end{lemma}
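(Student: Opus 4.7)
My approach will be to reduce the estimate \eqref{03262007_02} to iterated applications of Lemma~\ref{lemma03202007_01}, exploiting that $\fL_0$ commutes with tangential derivatives $\partial_{x^m}$ for $m \ge 2$, since the coefficients $a^{ij}(t,x^1)$ do not depend on $x' \in \bR^{d-1}$. The main engine will be the following auxiliary claim: for every multi-index $\alpha$ with $\alpha^1 \in \{0,1,2\}$ and every $0 < r < R < \infty$, there is a constant $N = N(d,p,\delta,\alpha,r,R)$ such that any $v \in C^\infty_{\text{loc}}(\bR^{d+1})$ with $\fL_0 v = 0$ in $Q_R$ satisfies $\|D^\alpha v\|_{L_p(Q_r)} \le N \|v\|_{L_p(Q_R)}$.

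I will prove this claim by induction on the tangential order $k := \alpha^2 + \cdots + \alpha^d$. In the base case $k=0$, $D^\alpha v$ is one of $v$, $v_{x^1}$, or $v_{x^1 x^1}$, and each is bounded by $\|v\|_{L_p(Q_R)}$ directly by Lemma~\ref{lemma03202007_01} (passing through an intermediate radius if desired, and using $\fL_0 v = 0$ to drop the first term on the right). For the inductive step, write $\alpha = \beta + e_m$ for some $m \ge 2$ and some $\beta$ with $\beta^1 = \alpha^1 \le 2$ and tangential order $k-1$, and pick $r < r' < R$. Since $a^{ij}$ are independent of $x^m$, the function $v_{x^m}$ again satisfies $\fL_0 v_{x^m} = 0$ in $Q_R$. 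Apply the inductive hypothesis to $v_{x^m}$ on $(r,r')$ to get $\|D^\beta v_{x^m}\|_{L_p(Q_r)} \le N \|v_{x^m}\|_{L_p(Q_{r'})}$, and then use Lemma~\ref{lemma03202007_01} on $(r',R)$ to bound $\|v_{x^m}\|_{L_p(Q_{r'})} \le \|v_x\|_{L_p(Q_{r'})} \le N \|v\|_{L_p(Q_R)}$. Chaining these gives the claim for $\alpha$.

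Given the claim, the first summand $\|D^\gamma v\|_{L_p(Q_r)}$ in \eqref{03262007_02} is immediate. To handle $\|D^{\gamma'} v_t\|_{L_p(Q_r)}$, I will invoke the equation $v_t = -a^{ij}(t,x^1) v_{x^i x^j}$. Since $\gamma'$ has zero entries in both the ``$t$'' and the $x^1$ slots, $D^{\gamma'}$ commutes with multiplication by $a^{ij}(t,x^1)$, so $|D^{\gamma'} v_t| \le N \sum_{i,j} |D^{\gamma'} v_{x^i x^j}|$; each index $\gamma' + e_i + e_j$ has first component in $\{0,1,2\}$, so the claim produces the desired bound in terms of $\|v\|_{L_p(Q_R)}$.

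The main obstacle, and the reason for the asymmetry in the statement, is the loss of commutation of $\fL_0$ with $\partial_t$ and with $\partial_{x^1}$: since $a^{ij}$ depend on $(t,x^1)$, differentiating $\fL_0 v = 0$ in either of these variables creates uncontrolled terms involving derivatives of the coefficients. This is exactly why the statement restricts $\gamma^1 \le 2$ and allows only a single $t$-derivative $D^{\gamma'} v_t$ on the left, and it forces these borderline derivatives to be recovered from the equation itself rather than by bootstrapping the tangential-derivative scheme.
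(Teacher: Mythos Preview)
Your argument is correct and follows essentially the same approach the paper has in mind: the paper does not spell out a proof here but refers back to the analogous Lemma~\ref{lemma04012007_02} (itself an $L_p$ version of Lemma~4.3 in \cite{Doyoon&Krylov:parabolic:2006}), whose proof is precisely the tangential-differentiation bootstrap built on the local estimate of Lemma~\ref{lemma03202007_01}, with the single $t$-derivative recovered from the equation. One small remark: when you write that $D^{\gamma'}$ ``commutes with multiplication by $a^{ij}(t,x^1)$'', bear in mind $a^{ij}$ is only measurable, so the identity $D^{\gamma'}v_t = -a^{ij}D^{\gamma'}v_{x^ix^j}$ should be read as holding a.e.\ (for a.e.\ $(t,x^1)$ the equation holds for all $x'$ by continuity of both sides in $x'$, and then one differentiates in $x'$); this is harmless for the $L_p$ estimate you need.
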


Let us recall some function spaces which we need in the following.
We denote by $H_p^{s}(\bR^{d})$, $s \in \bR$, 
as is well-known,
the space of all generalized functions $u$ such that
$(1-\Delta)^{s/2} u \in L_p(\bR^d)$.
For $k = 0,1,2,\cdots$, 
$W_p^{k}(\Omega)$ is the usual Sobolev space and 
$C^{k+\nu}(\Omega)$, $0 < \nu < 1$, is the H\"{o}lder space.
By $C^{k}(\Omega)$ we mean the space of all functions $u$ 
whose derivatives $D^{\alpha} u$, $|\alpha| \le k$, are continuous
and bounded in $\Omega$. 
As usual, we set
$$
\| u \|_{C^{k+\nu}(\Omega)}
= \| u \|_{C^{k}(\Omega)}
+ \sum_{|\alpha| = k} \sup_{\substack{x, y \in \Omega \\ x \ne y}} 
\frac{|D^{\alpha} u(x) - D^{\alpha} u(y)|}{|x-y|^{\nu}},
$$
where
$$
\| u \|_{C^{k}(\Omega)}
= \sum_{|\alpha| \le k} \sup_{x \in \Omega} |D^{\alpha} u(x)|.
$$

The following three lemmas generalize
Lemma 3.4, 3.5, and 3.7 in \cite{Doyoon:parabolic:2006}
to the case $p \ge 2$.

\begin{lemma} \label{lemma03202007_02}
Let $p \ge 2$ and $u \in W_p^{1,2}((0,\infty) \times \bR^d) \cap C^{2}([0,\infty) \times \bR^d)$.
Then
$$
\sup_{0 \le s < \infty} \| u(s,\cdot) \|_{W_{p}^{1}(\bR^d)}
\le 
N(d,p) \| u \|_{W_{p}^{1,2}((0,\infty) \times \bR^d)}.
$$
\end{lemma}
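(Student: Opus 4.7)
The plan is to control $\|u(s,\cdot)\|_{L_p(\bR^d)}$ and $\|u_{x^i}(s,\cdot)\|_{L_p(\bR^d)}$ separately by means of a fundamental-theorem-of-calculus identity in the time variable applied to $|u|^p$ and $|u_{x^i}|^p$. For the gradient piece, a spatial cutoff together with integration by parts in $x^i$ will be used to move a spatial derivative off the problematic factor $u_{tx^i}$, which is not controlled by the $W_p^{1,2}$ norm, and onto $u_t$.

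First I would observe that since $u,u_x \in L_p((0,\infty) \times \bR^d)$, Fubini guarantees that $\|u(t,\cdot)\|_{L_p(\bR^d)}^p$ and $\|u_x(t,\cdot)\|_{L_p(\bR^d)}^p$ are integrable in $t$, so one can extract a sequence $\tau_n \to \infty$ along which both $\|u(\tau_n,\cdot)\|_{L_p(\bR^d)}$ and $\|u_x(\tau_n,\cdot)\|_{L_p(\bR^d)}$ vanish. For the bound on $\|u(s,\cdot)\|_{L_p(\bR^d)}$, I would write
$$
\int_{\bR^d} |u(s,x)|^p\,dx - \int_{\bR^d} |u(\tau_n,x)|^p\,dx = -p\int_s^{\tau_n} \int_{\bR^d} |u|^{p-2} u\, u_t\, dx\, dt,
$$
which uses only $C^2$-regularity and is justified by Fubini since the right-hand side is dominated by $\|u\|_{L_p}^{p-1}\|u_t\|_{L_p}$ via H\"older. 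Passing to the limit and applying Young's inequality yields $\|u(s,\cdot)\|_{L_p}^p \le N(p)(\|u\|_{L_p}^p + \|u_t\|_{L_p}^p)$.

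For the derivative, the naive analog would involve $u_{tx^i}$, which is not in the space. To bypass this, I would fix a smooth spatial cutoff $\eta_R$ with $\eta_R \equiv 1$ on $B_R$, $\eta_R \equiv 0$ outside $B_{2R}$, and $|\nabla \eta_R| \le N/R$, and set $\phi_R(t) := \int_{\bR^d} |u_{x^i}(t,x)|^p \eta_R(x)\, dx$. Since $u \in C^2$ and $\eta_R$ has compact support, differentiation under the integral and integration by parts in $x^i$ are legitimate and give
$$
\phi_R'(t) = -p(p-1)\int |u_{x^i}|^{p-2} u_{x^i x^i}\, u_t\, \eta_R \, dx \; - \; p \int |u_{x^i}|^{p-2} u_{x^i}\, u_t\, \eta_{R,x^i}\, dx.
$$
Integrating from $s$ to $\tau_n$, letting $n \to \infty$ (so that $\phi_R(\tau_n) \to 0$ by the choice of $\tau_n$), and then $R \to \infty$ — where the cutoff error is killed by the factor $1/R$ on $\eta_{R,x^i}$ and the finiteness of $\|u_x\|_{L_p}^{p-1}\|u_t\|_{L_p}$, and monotone convergence converts $\phi_R(s)$ to $\|u_{x^i}(s,\cdot)\|_{L_p}^p$ — H\"older (with exponents $p/(p-2), p, p$ when $p>2$, or trivially when $p=2$) on the surviving term gives
$$
\|u_{x^i}(s,\cdot)\|_{L_p}^p \le N\, \|u_x\|_{L_p}^{p-2}\, \|u_{xx}\|_{L_p}\, \|u_t\|_{L_p},
$$
which after Young's inequality is bounded by $N\|u\|_{W_p^{1,2}}^p$. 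Summing over $i$ and combining with the zero-order bound completes the proof.

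The main obstacle I anticipate is the rigorous justification of the cutoff limit and the integration by parts: since $u$ is only $C^2$ with no a priori spatial decay, the boundary terms in unbounded regions must be handled by the cutoff and the $L_p$-integrability built into the $W_p^{1,2}$ norm, rather than by pointwise decay. Apart from this technical point, the argument reduces to Hölder and Young after the key identity has been set up.
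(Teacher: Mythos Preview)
Your proposal is correct and follows essentially the same route as the paper: the fundamental theorem of calculus in $t$ applied to $|u|^p$ and $|u_{x^i}|^p$, followed (for the gradient term) by an integration by parts in $x^i$ that trades $|u_{x^i}|^{p-2}u_{x^i}\,u_{tx^i}$ for $(p-1)|u_{x^i}|^{p-2}u_{x^ix^i}\,u_t$, and then H\"older/Young. The paper writes these identities directly on $\bR^d$ and $(s,\infty)$ without discussing the boundary terms, whereas you supply the missing justification via the spatial cutoff $\eta_R$ and the sequence $\tau_n\to\infty$; this extra care is a genuine improvement in rigor, not a different strategy.
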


\begin{proof}
Note that 
$$
\int_{\bR^d} |u(s,x)|^p \, dx
= - \int_{\bR^d} \int_s^{\infty} p |u(t,x)|^{p-2} u(t,x) u_t(t,x) \, dt \, dx
$$
and
$$
- p | u(t,x) |^{p-2} u(t,x) u_t(t,x)
\le p |u|^{p-1} |u_t|
\le (p-1) |u|^{p} + |u_t|^p.
$$
Hence
$$
\int_{\bR^d} |u(s,x)|^p \, dx
\le N(p) \| u \|^p_{W_p^{1,2}((0,\infty) \times \bR^d)}
$$
for all $s \in [0, \infty)$.

Similarly,
$$
\int_{\bR^d} |u_{x^i}(s,x)|^p \, dx
= - \int_{\bR^d} \int_s^{\infty} p | u_{x^i}(t,x) |^{p-2} u_{x^i}(t,x) u_{x^i t}(t,x) \, dt \, dx 
$$
$$
= \int_s^{\infty} \int_{\bR^d} (p^2 - p) | u_{x^i}(t,x) |^{p-2} u_{x^i x^i}(t,x) u_t(t,x) \, dx \, dt,
$$
where the last equality is due to integration by parts
(also note that $f'(\xi) = p(p-1) |\xi|^{p-2}$ if $f(\xi) = p |\xi|^{p-2}\xi$, $\xi \in \bR$, $p \ge 2$).
Observe that
$$
(p^2 - p) | u_{x^i} |^{p-2} u_{x^i x^i} u_t
\le (p^2 - p) |u_{x^i x^i}| |u_{x^i}|^{(p-2)/2} |u_{x^i}|^{(p-2)/2} |u_t| 
$$
$$
\le N(p) \left(|u_{x^i x^i}|^2 |u_{x^i}|^{p-2} 
+ |u_{x^i}|^{p-2} |u_t|^2 \right)
$$
$$
\le N(p) \left( |u_{x^i x^i}|^{p} + |u_{x^i}|^{p}
+ |u_{x^i}|^{p} + |u_t|^{p} \right).
$$
Therefore,
$$
\int_{\bR^d} |u_{x^i}(s,x)|^p \, dx
\le N \| u \|^p_{W_p^{1,2}((0,\infty) \times \bR^d)}
$$
for all $s \in [0,\infty)$.
The lemma is proved.
\end{proof}

\begin{lemma} \label{lemma03202007_04}
Let $1 \le q < \infty$, $p \ge 2$,
and $v \in C_{\text{loc}}^{\infty}(\bR^{d+1})$
be a function such that
$\fL_{0} v = 0$ in $Q_{4}$.
Then, for all $x' \in B'_{1}$,
$$
\| v(\cdot, x') \|_{L_{q,p}(\fq_2)}
+ \| v_{x}(\cdot, x') \|_{L_{q,p}(\fq_2)}
+ \| v_{xx'}(\cdot, x') \|_{L_{q,p}(\fq_2)}
\le N \| v \|_{L_p(Q_{R})},
$$
where $3 < R \le 4$, $N=N(d, \delta, p, R)$,
and, as we recall, for example,
$\|v(\cdot,x')\|_{L_{q,p}(\fq_2)}$
is the $L_{q,p}$-norm of $v(t,x^1,x')$ as a function of 
$(t,x^1)$ on $\fq_2 = (0,4) \times (-2,2)$.
\end{lemma}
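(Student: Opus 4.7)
The plan is to combine Sobolev embedding in $x'$ with a sup-in-$t$ estimate via Lemma \ref{lemma03202007_02} and the interior derivative bound Lemma \ref{lemma03202007_03}. Fix $R_1 = 5/2$, pick $R_2 \in (R_1, R)$ (possible since $R > 3$), and choose an integer $m > (d-1)/p$. Since $(-2,2) \times B'_{3/2} \subset B_{R_1}$, Sobolev embedding $W_p^m(B'_{3/2}) \hookrightarrow L_\infty(B'_1)$ gives, for every $(t, x^1)$ and $x' \in B'_1$,
\[
|f(t, x^1, x')|^p \le N \sum_{|\alpha'| \le m} \int_{B'_{3/2}} |D^{\alpha'} f(t, x^1, y')|^p \, dy'.
\]
Applying this pointwise with $f = v$, $f = v_{x^i}$, and $f = v_{x^i x^j}$ ($j \ge 2$, matching the definition of $v_{xx'}$), integrating over $x^1 \in (-2, 2)$, and majorizing the outer $L_q$-integral over $(0, 4)$ by $4^{1/q}$ times the sup in $t$, the lemma reduces to establishing
\[
\sup_{t \in [0, 4]} \| D^{\alpha'} f(t, \cdot) \|_{L_p(B_{R_1})} \le N \|v\|_{L_p(Q_R)}
\]
for $|\alpha'| \le m$ and the same choices of $f$.

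To obtain the sup-in-$t$ bound, introduce cutoffs $\eta \in C^\infty([0, \infty))$ with $\eta \equiv 1$ on $[0, 4]$ and $\eta \equiv 0$ on $[R_2^2, \infty)$ (legitimate since $R_2 > 2$), and $\zeta \in C_0^\infty(B_{R_2})$ with $\zeta \equiv 1$ on $B_{R_1}$. For any multi-index $\beta$ with $\beta^1 = 0$, set $U_\beta := \eta(t) \zeta(x) D^\beta v(t, x)$; then $U_\beta \in W_p^{1,2}((0, \infty) \times \bR^d) \cap C^2([0, \infty) \times \bR^d)$, and Lemma \ref{lemma03202007_02} yields
\[
\sup_{s \ge 0} \|U_\beta(s, \cdot)\|_{W_p^1(\bR^d)} \le N \|U_\beta\|_{W_p^{1,2}((0, \infty) \times \bR^d)}.
\]
Since $\eta, \zeta$ are identically $1$ with vanishing first derivatives on $[0, 4] \times B_{R_1}$, the left-hand side majorizes $\sup_{s \in [0, 4]}\bigl(\|D^\beta v(s, \cdot)\|_{L_p(B_{R_1})} + \sum_i \|D^{\beta + e_i} v(s, \cdot)\|_{L_p(B_{R_1})}\bigr)$.

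Leibniz-expanding $U_\beta$, $(U_\beta)_{x^i}$, $(U_\beta)_{x^i x^j}$, $(U_\beta)_t$ writes every resulting term as a bounded cutoff-derivative times $D^{\beta + \nu} v$ with $|\nu| \le 2$, or times $D^\beta v_t$, all supported in $Q_{R_2}$. Since $\beta^1 = 0$ gives $(\beta + \nu)^1 \le 2$ and the $v_t$-term has multi-index $\beta$ with $\beta^1 = 0$, both classes fall within the scope of Lemma \ref{lemma03202007_03}, which (applied with inner radius $R_2$ and outer radius $R$, valid since $R_2 < R$) bounds the $L_p(Q_{R_2})$-norm of each by $N \|v\|_{L_p(Q_R)}$. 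Hence $\|U_\beta\|_{W_p^{1,2}} \le N \|v\|_{L_p(Q_R)}$. Taking $\beta = \alpha'$ handles $f = v$ and $f = v_{x^i}$; taking $\beta = \alpha' + e_j$ with $j \ge 2$ (still $\beta^1 = 0$) and reading $D^{\beta + e_i} v = D^{\alpha'} v_{x^j x^i}$ off the $W_p^1$-norm handles $f = v_{x^i x^j}$.

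The main obstacle, which dictates the specific choices above, is the restriction $\gamma^1 \in \{0, 1, 2\}$ in Lemma \ref{lemma03202007_03}: applying the cutoff argument directly to $D^{\alpha' + e_1} v$ would require control of $D^{\alpha' + 3 e_1} v$ and $D^{\alpha' + e_1} v_t$, both outside Lemma \ref{lemma03202007_03}'s range. The device of keeping $\beta^1 = 0$ throughout and extracting the extra $\partial_{x^1}$-derivative from the $W_p^1$-norm on the left of Lemma \ref{lemma03202007_02}, rather than from the $W_p^{1,2}$-norm on the right, circumvents this obstruction.
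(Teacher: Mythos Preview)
Your proof is correct. It uses the same three ingredients as the paper's proof---Sobolev embedding in $x'$, the sup-in-$t$ estimate of Lemma \ref{lemma03202007_02}, and the interior derivative bound of Lemma \ref{lemma03202007_03}---but packages them differently. The paper first proves only the estimate for $v$ and $v_{x^1}$ by applying Lemma \ref{lemma03202007_02} in the two-dimensional setting $(0,\infty)\times\bR$ (with $x'$ a parameter), and then bootstraps to $v_{x'}$ and $v_{xx'}$ by invoking $\fL_0 v_{x'}=0$ and $\fL_0 v_{x'x'}=0$. You instead apply Lemma \ref{lemma03202007_02} directly in $(0,\infty)\times\bR^d$ to cut-off $x'$-derivatives $D^\beta v$ with $\beta^1=0$, and recover the needed $\partial_{x^1}$-derivative from the $W_p^1$-norm on the left rather than from the equation. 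Your route is slightly more uniform (no separate bootstrap step) and makes the role of the restriction $\gamma^1\le 2$ in Lemma \ref{lemma03202007_03} explicit; the paper's route has the minor advantage of isolating a clean two-variable core estimate \eqref{03202007_01} that is reused.
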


\begin{proof}
We prove that, for each $x' \in B'_1$,
\begin{equation}							\label{03202007_01}
\|v(\cdot,x')\|_{L_{q,p}(\fq_2)}
+ \|v_{x^1}(\cdot,x')\|_{L_{q,p}(\fq_2)}
\le N \| v \|_{L_p(Q_{\tau})},
\end{equation}
where $3 < \tau < R$ and $N = N(d, \delta, p, \tau)$.
If this turns out to be true, then using this and the fact that $\fL v_{x'} = 0$
in $Q_4$
we obtain the inequality \eqref{03202007_01} with $v_{x'}$ in place of $v$.
Furthermore, using $\fL v_{x'x'} = 0$ in $Q_4$, 
we also obtain the inequality \eqref{03202007_01} with $v_{x'x'}$ in place of $v$. 
Hence the left side of the inequality in the lemma is not greater than a constant times
$$
\| v \|_{L_p(Q_{\tau})} + \| v_{x'} \|_{L_p(Q_{\tau})} + \| v_{x'x'} \|_{L_p(Q_{\tau})}.
$$
This and Lemma \ref{lemma03202007_03} finish the proof.

To prove \eqref{03202007_01}, we introduce 
an infinitely differentiable function $\eta$ defined on $\bR^2$ such that 
$$
\eta(t,x^1) = \left\{
\begin{aligned}
1 & \quad \text{on} \quad [0,4] \times [-2,2]\\
0 & \quad \text{on} \quad \bR^2 \setminus \left[(-r^2,r^2) \times (-r,r)\right]
\end{aligned}
\right.,
$$
where $2 < r < 2\sqrt{2}$.
For each $x' \in B'_1$, view $\eta v$ as a function of $(t,x^1) \in (0,\infty) \times \bR$. 
Then by Lemma \ref{lemma03202007_02}
\begin{equation}							\label{03202007_02}
\sup_{0 \le s < \infty} \| (\eta v)(s,\cdot,x') \|_{W_{p}^{1}(\bR)}
\le N(p) \| (\eta v)(\cdot, x') \|_{W_p^{1,2}((0,\infty) \times \bR)}
\end{equation}
for all $x' \in B'_1$.
Note that the left-hand side of the inequality \eqref{03202007_01}
is less than or equal to a constant times
the left-hand side of the inequality \eqref{03202007_02}.
Moreover, the right-hand side of the inequality \eqref{03202007_02}
is no greater than a constant times 
\begin{equation}							\label{03202007_03}
\| v(\cdot, x') \|_{W_{p}^{1,2}(\fq_r)}.
\end{equation}
for all $x' \in B'_1$.
Now notice that
there exists a constant $N$ and an integer $k$ such that, 
for each $(t,x^1) \in (0,r^2) \times (-r,r)$,
$$
\sup_{x' \in B'_1} |v(t,x^1,x')| \le N \|v(t,x^1,\cdot)\|_{W_{p}^{k}(B'_1)}.
$$
This inequality remains true if we replace $v$ with $v_t$, $v_{x^1}$, or $v_{x^1 x^1}$. 
Hence, for all $x' \in B'_1$, 
the term \eqref{03202007_03} is not greater than a constant times
$$
\left(\int_0^{r^2} \int_{-r}^{r} 
\|v_t(t,x^1,\cdot)\|^p_{W_{p}^{k}(B'_1)}
+ \sum_{m=0}^{2} \|D_{x^1}^m v (t,x^1,\cdot)\|^p_{W_{p}^{k}(B'_1)}
\, dx^1 \, dt \right)^{1/p},
$$
where $D_{x^1}^0 v = v$, $D_{x^1}^1v = v_{x^1}$,
and $D_{x^1}^2 = v_{x^1x^1}$.
We see that the above term is, by Lemma \ref{lemma03202007_03}, less than or equal to a constant times the left-hand side of the inequality \eqref{03202007_01}
(note that $\{ (t,x^1,x'): t \in (0,r^2), x^1 \in (-r,r), |x'|<1\}
\subsetneq Q_{\tau}$).
The lemma is proved.
~\end{proof}

Below we use the following notation.
$$
[f ]_{\mu,\nu; Q_r} := \sup_{\substack{(t,x), (s,y) \in Q_r \\ (t,x) \ne (s,y)}} \frac{|f(t,x) - f(s,y)|}{\,\,\,|t-s|^{\mu} + |x-y|^{\nu}}.
$$

\begin{lemma} \label{lemma03222007}
Let $p \ge 2$, $q > \max\{p, \frac{2p}{p-1}\}$
(note that $\frac{2}{q} < 1 - \frac{1}{p}$),
and $\frac{2}{q} < \beta < 1 - \frac{1}{p}$.
Assume that $v \in C_{\text{loc}}^{\infty}(\bR^{d+1})$
is a function such that
$\fL_{0} v = 0 $ in $Q_{4}$.
Then
$$
[ v_{xx'} ]_{\mu, \nu, Q_1}
\le N(d, p, q, \delta, \beta) \| v \|_{L_p(Q_4)},
$$
where $\mu = \frac{\beta}{2} - \frac{1}{q}$
and $\nu = 1 - \beta - \frac{1}{p}$.
\end{lemma}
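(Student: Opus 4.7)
The plan is to combine Lemma \ref{lemma03202007_04} (which gives $L_{q,p}$-control on $v$ and certain first and mixed second derivatives) with a mixed-norm Sobolev--H\"{o}lder embedding in the $(t, x^1)$-variables, and to handle the $x'$-direction by repeated differentiation.

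Since $a^{ij} = a^{ij}(t, x^1)$ is independent of $x'$, one has $\fL_0 v_{x'^\alpha} = 0$ in $Q_4$ for every multi-index $\alpha$ in the $x'$-directions. Applying Lemma \ref{lemma03202007_04} to $v_{x'^\alpha}$ on appropriately shrunken domains, and using Lemma \ref{lemma03202007_03} to bound $\|v_{x'^\alpha}\|_{L_p(Q_R)}$ by $\|v\|_{L_p(Q_4)}$, I would obtain uniformly in $x' \in B'_1$ and for every $\alpha$ and every $j \ge 2$,
\[
\|v_{x'^\alpha}(\cdot,\cdot,x')\|_{L_{q,p}(\fq_2)} + \|v_{xx'^\alpha}(\cdot,\cdot,x')\|_{L_{q,p}(\fq_2)} + \|v_{xx'^{\alpha+e_j}}(\cdot,\cdot,x')\|_{L_{q,p}(\fq_2)} \le N_\alpha \|v\|_{L_p(Q_4)}.
\]

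Now fix $x' \in B'_1$ and set $w(t, x^1) := v_{xx'}(t, x^1, x')$. The target exponents factor as $\mu + 1/q = \beta/2$ and $\nu + 1/p = 1 - \beta$, which are exactly the fractional Sobolev orders required for embedding into $C^\mu$ (in $t$) and $C^\nu$ (in $x^1$). For the spatial direction, interpolating $w \in L_q(I; L_p(J))$ against the $L_{q,p}$-bounded components of $w_{x^1}$ (supplied by the above inequality applied to a suitable $v_{x'^\alpha}$) yields $w \in L_q(I; W^{1-\beta}_p(J)) \hookrightarrow L_q(I; C^\nu(J))$, where $I \times J = \fq_2$. For the time direction, the parabolic structure of $\fL_0 v = 0$ is exploited to extract fractional time smoothness of order $\beta/2$ from the spatial derivatives already controlled, giving $w \in W^{\beta/2}_q(I; L_p(J)) \hookrightarrow C^\mu(I; L_p(J))$. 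A standard averaging/slicing argument then upgrades these two pieces of information to a pointwise $C^{\mu,\nu}$-bound on $\fq_1$. Finally, the H\"{o}lder continuity in $x'$ is obtained by taking $|\alpha|$ large in Step 1 and applying the Sobolev embedding $W^k_p(B'_1) \hookrightarrow C^\nu(B'_1)$.

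\textbf{Main obstacle.} The time-direction part of the embedding is the principal difficulty: Lemma \ref{lemma03202007_04} gives no direct $L_{q,p}$-control on $w_t$, and using the equation to rewrite $w_t$ in terms of spatial derivatives brings in the pure second derivative $v_{x^1 x^1 x'^\alpha}$, which is also not supplied directly. The budget $2\mu + \nu = 1 - 2/q - 1/p$ corresponds precisely to the $W^{0,1}_{q,p}$-scaling (one spatial derivative, matching what the lemma supplies), and the parameter $\beta \in (2/q, 1 - 1/p)$ interpolates the trade-off between time and space H\"{o}lder regularity; the restrictions $q > 2p/(p-1)$ and $\beta \in (2/q, 1-1/p)$ enter here to keep both $\mu$ and $\nu$ strictly positive. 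Carrying out this interpolation carefully, using the parabolic equation in the spirit of the corresponding argument of \cite{Doyoon:parabolic:2006}, is the technical heart of the proof.
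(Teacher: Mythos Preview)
Your proposal correctly identifies the main obstacle but does not resolve it, and the suggested interpolation route does not work as stated. Take $w = v_{x^1 x^j}$ with $j \ge 2$: your spatial step needs $w_{x^1} = v_{x^1 x^1 x^j}$ in $L_{q,p}(\fq_2)$, but Lemma \ref{lemma03202007_04} applied to any $v_{x'^\alpha}$ only produces quantities of the form $v_{x'^\alpha}$, $v_{x x'^\alpha}$, $v_{xx' x'^\alpha}$, never a term with two $x^1$-derivatives. The same gap blocks the time step, since extracting fractional $t$-regularity from the equation forces $v_{x^1 x^1}$ to appear. So both halves of your embedding argument are missing exactly the same ingredient, and ``interpolating carefully'' cannot manufacture it.

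The paper supplies this missing piece by a different mechanism. One isolates the one-dimensional operator $\partial_t + a^{11}(x^1)\partial_{x^1}^2$ by writing
\[
(\eta v)_t + a^{11}(x^1)(\eta v)_{x^1 x^1} = \eta g + \text{lower order},
\qquad g = -\sum_{i \ne 1 \text{ or } j \ne 1} a^{ij} v_{x^i x^j},
\]
where $\eta$ is a cutoff in $(t,x^1)$ and the right-hand side is controlled in $L_{q,p}(\fq_2)$ by Lemma \ref{lemma03202007_04}. Since $a^{11}$ depends only on $x^1$, this one-dimensional operator satisfies the hypotheses of Theorem \ref{theorem04022007}, which then yields the full $W^{1,2}_{q,p}$ norm of $\eta v$ in $(t,x^1)$ --- in particular $v_{x^1 x^1}$ and $v_t$. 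With this in hand, Krylov's trace theorem (Theorem 7.3 in \cite{MR1837532}) gives $\|(\eta v)(t,\cdot,x') - (\eta v)(s,\cdot,x')\|_{H_p^{2-\beta}(\bR)} \le N|t-s|^{\mu}$, and the embedding $H_p^{2-\beta}(\bR) \hookrightarrow C^{1+\nu}(\bR)$ delivers the H\"older bounds for $v$ and $v_{x^1}$; the $x'$-bootstrapping then proceeds as you describe. The crucial point you are missing is that Theorem \ref{theorem04022007} --- the main result of the first part of the paper --- is precisely what unlocks the $v_{x^1 x^1}$ control.
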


\begin{proof}
First we note that
$0 < \mu < 1$ and $0 < \nu < 1$.
We prove
\begin{equation} \label{03212007_01}
[v]_{\mu,\nu,Q_1} + [v_{x^1}]_{\mu,\nu,Q_1} 
\le N \| v \|_{L_p(Q_{\tau})},
\end{equation}
where $3 < \tau < 4$.
If this is done, we can finish the proof using the argument in the proof of Lemma \ref{lemma03202007_04} (i.e., use $\fL v_{x'} = 0$, $\fL v_{x'x'} = 0$, and Lemma \ref{lemma03202007_03}).

To prove the inequality \eqref{03212007_01}
it suffices to prove the following:
for all $s,t \in (0,1)$ and $x' \in B_1'$,
\begin{equation}						\label{h_sup_01}
\| v(t,\cdot,x') - v(s,\cdot,x') \|_{C^{1}(-1,1)} \le N |t-s|^{\mu} 
\| v \|_{L_p(Q_{\tau})},
\end{equation}
\begin{equation}						\label{2006_09_19_01}
\| v(t,\cdot,x') \|_{C^{1+\nu}(-1,1)} + \|v_{x'}(t,\cdot,x')\|_{C^1(-1,1)}
\le N \| v \|_{L_p(Q_{\tau})},
\end{equation}
where $v(t,x^1,x')$ is considered as a function of only $x^1 \in (-1,1)$.
Indeed, observe that,
for $(t,x), (s,y) \in Q_1$,
$$
|v_{x^1}(t,x) - v_{x^1}(s,y)|
\le |v_{x^1}(t,x^1, x') - v_{x^1}(t,y^1,x')|
$$
$$
+ |v_{x^1}(t,y^1,x') - v_{x^1}(t,y^1,y')|
+ |v_{x^1}(t,y^1,y') - v_{x^1}(s,y^1,y')|
$$
$$
\le |x^1 - y^1|^{\nu} \| v(t,\cdot,x') \|_{C^{1+\nu}(-1,1)}
+ \sup_{|z'| < 1} \|v_{x'}(t,\cdot,z')\|_{C^1(-1,1)} |x' - y'|
$$
$$
+ \| v(t,\cdot,y') - v(s,\cdot,y') \|_{C^{1}(-1,1)} 
$$
$$
\le N \left( |x-y|^{\nu} + |t-s|^{\mu} \right) 
\| v \|_{L_p(Q_{\tau})},
$$
where the last inequality is due to \eqref{h_sup_01} and \eqref{2006_09_19_01}.
This proves
$$
[v_{x^1}]_{\mu,\nu,Q_1} \le N \| v \|_{L_p(Q_{\tau})}.
$$
Similarly, from \eqref{h_sup_01} and \eqref{2006_09_19_01} we obtain
$$
[v]_{\mu,\nu,Q_1} \le N \| v \|_{L_p(Q_{\tau})}.
$$

Now we prove the inequalities \eqref{h_sup_01} and \eqref{2006_09_19_01}.
Let $\eta$ be an infinitely differentiable function defined on $\bR^2$ such that 
$$
\eta(t,x^1) = \left\{
\begin{aligned}
1 & \quad \text{on} \quad [0,1] \times [-1,1]\\
0 & \quad \text{on} \quad \bR^2 \setminus (-4,4) \times (-2,2)
\end{aligned}
\right..
$$
Also let
$$
g(t,x^1,x') = - \sum_{i \ne 1 \, \text{or} \, j \ne 1} a^{ij}(t,x^1) v_{x^i x^j}(t,x^1,x'),
$$
so that
$$
v_t + a^{11}(x^1) v_{x^1x^1} = g
$$
Then
$$
(\eta v)_t + a^{11}(x^1) (\eta v)_{x^1 x^1} = 
\eta g + 2 a^{11} \eta_{x^1} v_{x^1} + (\eta_t + a^{11} \eta_{x^1 x^1}) v.
$$
For each $x' \in B'_{1}$, consider $\eta v$ as a function of $(t,x^1) \in (0,\infty) \times \bR$.
Then by Theorem \ref{theorem04022007}
(note that $\eta v = 0$ for $t \ge 4$), 
we have
$$
\| \eta v \|_{W_{q,p}^{1,2}((0,\infty) \times \bR)} 
\le N \|\eta g + 2 a^{11} \eta_{x^1} v_{x^1} + (\eta_t + a^{11} \eta_{x^1 x^1}) v \|_{L_{q,p}((0,\infty) \times \bR)},
$$
where $N = N(\delta, p,q)$.
We see that, for each $x' \in B'_1$, the right hand side of the above inequality is not greater than
a constant times
$$
\| v(\cdot, x') \|_{L_{q,p}(\fq_2)} + \| v_x(\cdot, x') \|_{L_{q,p}(\fq_2)} 
+ \| v_{xx'}(\cdot, x') \|_{L_{q,p}(\fq_2)},
$$
which is, by Lemma \ref{lemma03202007_04}, less than or equal to a constant times
$$
\| v \|_{L_p(Q_{r})},
$$
where $3 < r < \tau$. 
Hence
\begin{equation}\label{sup_esti_0001}
\| (\eta v)(\cdot, x') \|_{W_{q,p}^{1,2}((0,\infty) \times \bR)} 
\le N \| v \|_{L_p(Q_{r})}
\end{equation}
for all $x' \in B'_1$.
Again we view $\eta v$ as a function of $(t,x^1) \in (0,\infty) \times \bR$.
Then by Theorem 7.3 in \cite{MR1837532}
\begin{equation}\label{sup_esti_0002}
\| (\eta v)(t,\cdot,x') - (\eta v)(s,\cdot,x')\|_{H_p^{2-\beta}(\bR)}
\le N |t-s|^{\mu}
\| (\eta v)(\cdot, x') \|_{W_{q,p}^{1,2}((0,\infty) \times \bR)}
\end{equation}
for each $x' \in B'_1$,
where $N$ is independent of $s$, $t$, and $\eta v$.
Using an embedding theorem, we have
$$
\|(\eta v)(t,\cdot,x') - (\eta v)(s,\cdot,x')\|_{C^{1+\nu}(\bR)}
$$
$$
\le N \| (\eta v)(t,\cdot,x') - (\eta v)(s,\cdot,x')\|_{H_p^{2-\beta}(\bR)},
$$
where, as noted earlier, $\nu = 1 - \beta - 1/p$.
From this, \eqref{sup_esti_0001}, and \eqref{sup_esti_0002},
we finally have
$$
\|(\eta v)(t,\cdot,x') - (\eta v)(s,\cdot,x')\|_{C^{1+\nu}(\bR)}
\le N |t-s|^{\mu} \| v \|_{L_p(Q_{r})}
$$ 
for all $x' \in B'_1$. This proves \eqref{h_sup_01}.
Now 
by setting $s = 4$ in the above inequality, 
we obtain
\begin{equation} \label{h_sup_02}
\| v(t,\cdot,x') \|_{C^{1+\nu}(-1,1)} 
\le N \| v \|_{L_p(Q_{r})}.
\end{equation}
Then using the above inequality and the fact that $\fL v_{x'} = 0$ in $Q_4$,
we have
$$
\| v_{x'}(t,\cdot,x') \|_{C^{1+\nu}(-1,1)} \le N 
\| v_{x'} \|_{L_p(Q_{r})}.
$$
This and \eqref{h_sup_02} along with Lemma \ref{lemma03202007_03}
prove \eqref{2006_09_19_01} (recall that $3 < r < \tau$).
The lemma is proved.
~\end{proof}

Lemma \ref{lemma04012007_03} and \ref{lemma04012007_04} in section \ref{section04082007}
are repeated below, 
but since the operator $\fL_{\lambda}$ is being dealt with,
the lemmas have to be modified as follows.

\begin{lemma} \label{lemma03262007}
Let $p \ge 2$, $q > \max\{p, \frac{2p}{p-1}\}$,
and $\frac{2}{q} < \beta < 1 - \frac{1}{p}$.
For every $v \in C_{\text{loc}}^{\infty}(\bR^{d+1})$
such that $\fL_{\lambda} v = 0$ in $Q_{4}$,
we have
$$
[ v_{xx'} ]_{\mu, \nu, Q_1}
\le N 
\left( \|v_{xx}\|_{L_p(Q_4)} + \|v_t\|_{L_p(Q_4)} + \sqrt{\lambda} \|v_x\|_{L_p(Q_4)} \right),
$$
where $\mu = \frac{\beta}{2} - \frac{1}{q}$,
$\nu = 1 - \beta - \frac{1}{p}$,
and $N = N(d, p, q, \delta, \beta)$.
\end{lemma}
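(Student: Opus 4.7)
The plan is to imitate the strategy of Lemma \ref{lemma04012007_03}: first establish the bound when $\lambda=0$, then reduce the case $\lambda>0$ to the former by adding one extra spatial variable through the cosine extension trick.

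For $\lambda=0$, Lemma \ref{lemma03222007} applied directly gives $[v_{xx'}]_{\mu,\nu;Q_1}\le N\|v\|_{L_p(Q_4)}$. The quantity $\|v\|_{L_p(Q_4)}$ on the right is not what is wanted, but we can replace $v$ by $u:=v-(v)_{Q_4}-x^i(v_{x^i})_{Q_4}$. Since $\fL_0$ annihilates constants and linear functions in $x$, $u$ still satisfies $\fL_0 u=0$ in $Q_4$; and since $u$ differs from $v$ by an affine function, $u_{xx'}=v_{xx'}$, so $[v_{xx'}]_{\mu,\nu;Q_1}=[u_{xx'}]_{\mu,\nu;Q_1}\le N\|u\|_{L_p(Q_4)}$. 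Invoking the parabolic Poincar\'e-type estimate used in the proof of Lemma \ref{lemma04012007_03} (Lemma 5.4 of \cite{Krylov_2007_mixed_VMO}) to bound $\|u\|_{L_p(Q_4)}$ by $\|v_{xx}\|_{L_p(Q_4)}+\|v_t\|_{L_p(Q_4)}$ finishes this case.

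For $\lambda>0$, set $\mathbf{v}(t,x,\xi)=v(t,x)\cos(\sqrt{\lambda}\,\xi)$ on $\bR^{d+2}$. A direct computation shows
\[
\mathbf{v}_t+a^{ij}(t,x^1)\mathbf{v}_{x^ix^j}+\mathbf{v}_{\xi\xi}=(\fL_\lambda v)\cos(\sqrt{\lambda}\,\xi)=0
\]
on the extended cylinder $\mathbf{Q}_4:=(0,16)\times\{\mathbf{x}\in\bR^{d+1}:|\mathbf{x}|<4\}$, since the projection of $\mathbf{Q}_4$ onto the old coordinates $(t,x)$ is exactly $Q_4$. The extended operator fits the hypotheses of Lemma \ref{lemma03222007} in dimension $d+2$: $a^{11}$ depends only on $x^1$, the other $a^{ij}$ depend only on $(t,x^1)$, and the new coefficient of $\partial_\xi^2$ is the constant $1$, which is trivially measurable in $(t,x^1)$ with vanishing oscillation in the remaining variables. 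Applying the $\lambda=0$ case already proved, in this one-higher dimension, gives a bound for $[\mathbf{v}_{xx'}]_{\mu,\nu;\mathbf{Q}_1}$ by $N\bigl(\|\mathbf{v}_{\mathbf{x}\mathbf{x}}\|_{L_p(\mathbf{Q}_4)}+\|\mathbf{v}_t\|_{L_p(\mathbf{Q}_4)}\bigr)$.

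I would finish by restricting to the slice $\xi=0$. Since $\mathbf{v}_{x^ix^j}(t,x,0)=v_{x^ix^j}(t,x)$ for $i,j\in\{1,\dots,d\}$ and $\mathbf{Q}_1\cap\{\xi=0\}=Q_1$, the left-hand side dominates $[v_{xx'}]_{\mu,\nu;Q_1}$. The components of $\mathbf{v}_{\mathbf{x}\mathbf{x}}$ are one of $v_{xx}\cos$, $-\sqrt{\lambda}\,v_x\sin$, or $-\lambda v\cos$, while $\mathbf{v}_t=v_t\cos$; integrating $\xi$ over $(-4,4)$ yields a bound by a constant multiple of $\|v_{xx}\|_{L_p(Q_4)}+\sqrt{\lambda}\|v_x\|_{L_p(Q_4)}+\lambda\|v\|_{L_p(Q_4)}+\|v_t\|_{L_p(Q_4)}$, and the unwanted term $\lambda\|v\|_{L_p(Q_4)}$ is absorbed via $\lambda v=v_t+a^{ij}v_{x^ix^j}$ in $Q_4$. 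The main technical point is verifying that the $(d+2)$-dimensional analog of Lemma \ref{lemma03222007} applies with constants depending only on the data listed in the present lemma; this is clear because the structural assumptions on the coefficients are preserved under the extension and the proof of Lemma \ref{lemma03222007} depends on dimension only through a single explicit parameter.
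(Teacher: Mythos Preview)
Your proposal is correct and follows precisely the approach the paper intends: the paper's proof of this lemma consists of the single sentence ``We follow the steps in the proof of Lemma~\ref{lemma04012007_03}, but the sup-norms of the derivatives of $v$ on $Q_1$ have to be replaced by $[v_{xx'}]_{\mu,\nu,Q_1}$,'' and your write-up carries out exactly those steps --- the affine subtraction and Poincar\'e-type bound for $\lambda=0$, and the cosine extension for $\lambda>0$. The only point worth making explicit is that in the extended $(d{+}1)$-spatial-dimensional setting the $\lambda=0$ case gives a bound on $[\mathbf{v}_{\mathbf{x}\mathbf{x}'}]_{\mu,\nu;\mathbf{Q}_1}$ with $\mathbf{x}'=(x',\xi)$, which in particular controls the components $\mathbf{v}_{x^ix^j}$ with $j\in\{2,\dots,d\}$ that you need.
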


\begin{proof}
We follow the steps in the proof of Lemma \ref{lemma04012007_03},
but the sup-norms of the derivatives of $v$ on $Q_1$
have to be replaced by $[ v_{xx'} ]_{\mu, \nu, Q_1}$.
\end{proof}

\begin{lemma} \label{lemma03272007}
Let $p \ge 2$, $\lambda \ge 0$, $\kappa \ge 4$, and $r \in (0,\infty)$.
Let $v \in C_{\text{loc}}^{\infty}(\bR^{d+1})$ be 
such that
$\fL_{\lambda}v = 0$ in $Q_{\kappa r}$.
Then there is a constant $N$, depending only on $d$, $p$, and $\delta$,
such that
\begin{equation}							\label{03232007_01}
\dashint_{Q_r} | v_{xx'}(t,x) - \left( v_{xx'} \right)_{Q_r} |^p
\, dx \, dt
\le N \kappa^{-\nu p} \left( |v_{xx}|^p + |v_{t}|^p + \lambda^{p/2} |v_{x}|^p \right)_{Q_{\kappa r}},
\end{equation}
where $\nu = 1/2 - 3/(4p)$.
\end{lemma}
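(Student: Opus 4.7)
The plan is to mimic the proof of Lemma \ref{lemma04012007_04}, replacing the interior pointwise-derivative bounds of Lemma \ref{lemma04012007_03} (unavailable here because $a^{ij}$ genuinely depends on $t$) with the H\"older seminorm bound of Lemma \ref{lemma03262007}. As a first step I would reduce to $r = 1$ by the scaling $\hat v(t,x) = v(r^2 t, r x)$: this $\hat v$ satisfies $\hat\fL_{r^2\lambda}\hat v = 0$ in $Q_\kappa$ with rescaled coefficients that still obey Assumption \ref{assum_01} with the same $\delta$, and both sides of \eqref{03232007_01} scale identically by $r^{2p}$.

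For $r = 1$, set $r_0 = \kappa/4 \ge 1$ and $\check v(t,x) = v(r_0^2 t, r_0 x)$, so that $\check\fL_{r_0^2\lambda}\check v = 0$ in $Q_4$. I would then apply Lemma \ref{lemma03262007} with the specific choice $q = 4p$ and $\beta = 1/2 - 1/(4p)$. These meet the hypotheses $q > \max\{p, 2p/(p-1)\}$ and $2/q < \beta < 1 - 1/p$ whenever $p \ge 2$, and a direct substitution gives
$$2\mu = \beta - \tfrac{2}{q} = \tfrac12 - \tfrac{3}{4p}, \qquad \tilde\nu := 1 - \beta - \tfrac{1}{p} = \tfrac12 - \tfrac{3}{4p},$$
so the time and space H\"older exponents furnished by Lemma \ref{lemma03262007} coincide and equal the target exponent $\nu$ of the present statement. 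This simultaneous matching is the key observation.

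The remainder is scaling bookkeeping. From $v_{xx'}(T,X) = r_0^{-2}\check v_{xx'}(r_0^{-2}T, r_0^{-1}X)$ and $r_0 \ge 1$, together with $2\mu = \tilde\nu = \nu$, one obtains for $(T,X),(S,Y) \in Q_1$
$$|v_{xx'}(T,X) - v_{xx'}(S,Y)| \le N\, r_0^{-2-\nu}\,[\check v_{xx'}]_{\mu,\tilde\nu;Q_1},$$
and hence $\dashint_{Q_1}|v_{xx'} - (v_{xx'})_{Q_1}|^p \le N r_0^{-(2+\nu)p}[\check v_{xx'}]^p_{\mu,\tilde\nu;Q_1}$. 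Plugging in Lemma \ref{lemma03262007} for $\check v$ and unwinding $\|\check v_{xx}\|^p_{L_p(Q_4)} = r_0^{2p-d-2}\|v_{xx}\|^p_{L_p(Q_\kappa)}$ (the analogous identity for $\check v_t$, and $\|\check v_x\|^p_{L_p(Q_4)} = r_0^{p-d-2}\|v_x\|^p_{L_p(Q_\kappa)}$ offset by the factor $r_0^p$ coming from $(r_0\sqrt\lambda)^p$), then dividing by $|Q_\kappa| = (4r_0)^{d+2}|Q_1|$ to form averages, all $r_0$-exponents collapse to exactly $r_0^{-\nu p}$, which is $\le N\kappa^{-\nu p}$.

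The main obstacle is spotting the matching $(q,\beta) = (4p,\, \tfrac12 - \tfrac{1}{4p})$ under which both H\"older exponents of Lemma \ref{lemma03262007} equal $\tfrac12 - \tfrac{3}{4p}$; once that optimization is in place, the rest is routine scaling arithmetic and easily verified.
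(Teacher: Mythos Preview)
Your proposal is correct and follows essentially the same route as the paper: the paper's proof also applies Lemma~\ref{lemma03262007} with the specific choice $q=4p$, $\beta=\tfrac12-\tfrac{1}{4p}$ (so that $2\mu=\nu=\tfrac12-\tfrac{3}{4p}$), then uses the same rescaling $\check v(t,x)=v((\kappa/4)^2 t,(\kappa/4)x)$ and refers to the scaling argument of Lemma~\ref{lemma04012007_04} for the remaining bookkeeping. Your write-up simply makes those scaling computations explicit.
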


\begin{proof}
We use Lemma \ref{lemma03262007}
with $q = 4p$ and $\beta = 1/2 - 1/(4p)$.
Thus $2\mu = \nu = 1/2 - 3/(4p)$.
Note that
$$
\dashint_{Q_1} | v_{xx'}(t,x) - \left( v_{xx'} \right)_{Q_1} |^p
\, dx \, dt
\le N [v_{xx'}]^p_{\mu,\nu,Q_1}
$$
and
$$
[ \check{v}_{xx'} ]_{\mu, \nu, Q_1}
= \left( \frac{\kappa}{4} \right)^{\nu + 2}
[ v_{xx'} ]_{\mu, \nu, Q_{\kappa/4}}
\quad
\text{if}
\quad
\check{v}(t,x) = v \left(\left(\frac{\kappa}{4}\right)^2 \! t, \, \frac{\kappa}{4} x \right).
$$
Using these as well as the argument in the proof of Lemma \ref{lemma04012007_04},
one can complete the proof.
\end{proof}

Now we arrive at the following theorem, the proof of which
is almost identical to that of Theorem \ref{theorem04022007_01}.

\begin{theorem} \label{theorem03282007}
Let $p \ge 2$. 
Then there is a constant $N$, depending only on $d$, $p$, and $\delta$,
such that, for any $u \in W_p^{1,2}(\bR^{d+1})$,
$r \in (0, \infty)$, and $\kappa \ge 8$,
$$
\dashint_{Q_r} | u_{xx'}(t,x) - \left( u_{xx'} \right)_{Q_r} |^p
\, dx \, dt
\le N \kappa^{d+2} \left( |\fL_0 u|^p \right)_{Q_{\kappa r}}
+ N \kappa^{-\nu p} \left( |u_{xx}|^p \right)_{Q_{\kappa r}},
$$
where $\nu = 1/2 - 3/(4p)$.
\end{theorem}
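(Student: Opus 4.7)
The plan is to mimic the proof of Theorem \ref{theorem04022007_01}, replacing the operator $\cL_\lambda$ by $\fL_\lambda$ and using Lemma \ref{lemma03272007} in place of Lemma \ref{lemma04012007_04}. Because Lemma \ref{lemma03272007} controls only the oscillation of $v_{xx'}$ (there is no counterpart for $v_t$, since $a^{ij}$ for $(i,j) \neq (1,1)$ depend on $t$), only the $u_{xx'}$-oscillation appears in the conclusion, and the gain factor is $\kappa^{-\nu p}$ with $\nu = 1/2 - 3/(4p)$ rather than $\kappa^{-p}$.

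By density it suffices to take $u \in C_0^\infty(\bR^{d+1})$ and, as in Theorem \ref{theorem04022007_01}, it suffices to work with smooth coefficients. Fix $\lambda > 0$ and let $f := \fL_\lambda u \in C_0^\infty(\bR^{d+1})$. Choose a smooth cutoff $\eta$ which equals $1$ on $Q_{\kappa r/2}$ and vanishes outside $(-(\kappa r)^2,(\kappa r)^2) \times B_{\kappa r}$, and decompose $f = g + h$ with $g := f\eta$ and $h := f(1-\eta)$. Theorem \ref{theorem03262007} gives a unique $v \in W_p^{1,2}(\bR^{d+1})$ solving $\fL_\lambda v = h$, and by smoothness of coefficients and data, $v \in C^\infty$ as well. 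Since $h \equiv 0$ on $Q_{\kappa r/2}$ and $\kappa/2 \ge 4$, Lemma \ref{lemma03272007} applied on $Q_{\kappa r/2}$ yields
$$
\dashint_{Q_r}\!| v_{xx'} - ( v_{xx'} )_{Q_r}|^p\,dx\,dt \le N\kappa^{-\nu p}\bigl( |v_{xx}|^p + |v_{t}|^p + \lambda^{p/2} |v_{x}|^p \bigr)_{Q_{\kappa r/2}},
$$
which we then bound above by the analogous quantity on $Q_{\kappa r}$ (at the cost of an absorbable constant from the ratio of volumes).

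Set $w := u - v$, so that $\fL_\lambda w = g = f\eta$. The global estimate of Theorem \ref{theorem03262007} applied to $w$ gives
$$
\|w_t\|_{L_p}^p + \|w_{xx}\|_{L_p}^p + \lambda^{p/2}\|w_x\|_{L_p}^p \le N\|g\|_{L_p}^p \le N\int_{Q_{\kappa r}} |f|^p\,dx\,dt,
$$
which translates into $\bigl(|w_{xx'}|^p\bigr)_{Q_r} \le N\kappa^{d+2}\bigl(|f|^p\bigr)_{Q_{\kappa r}}$ and $\bigl(|w_{xx}|^p + |w_t|^p + \lambda^{p/2}|w_x|^p\bigr)_{Q_{\kappa r}} \le N\bigl(|f|^p\bigr)_{Q_{\kappa r}}$. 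Writing $u = w + v$, splitting the oscillation of $u_{xx'}$ by the triangle inequality, and combining the two displays with the $\kappa \ge 8$ inequality, we obtain
$$
\dashint_{Q_r}| u_{xx'} - ( u_{xx'} )_{Q_r}|^p\,dx\,dt \le N\kappa^{d+2}\bigl(|f|^p\bigr)_{Q_{\kappa r}} + N\kappa^{-\nu p}\bigl(|u_{xx}|^p + |u_t|^p + \lambda^{p/2}|u_x|^p\bigr)_{Q_{\kappa r}}.
$$
To match the stated right-hand side, we use $u_t = f + \lambda u - a^{ij}u_{x^ix^j}$ to absorb the $u_t$-term into the $|f|^p$ and $|u_{xx}|^p$ contributions, and then let $\lambda \searrow 0$, which kills the $\lambda\|u\|$ and $\sqrt{\lambda}\|u_x\|$ pieces.

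The only genuinely delicate point is accounting: keeping the power $\kappa^{-\nu p}$ in front of the $|u_{xx}|^p$ term while not letting the $\kappa^{d+2}$ blowup from passing from $Q_{\kappa r}$-averages of $w$-derivatives to $Q_r$-averages contaminate the gain factor. This is why $g$ is supported in $Q_{\kappa r}$ (making the $w$-estimate scale-sharp) and why the $v$-part is treated separately on $Q_{\kappa r/2}$ where $\fL_\lambda v \equiv 0$. No new ideas beyond the $\cL_\lambda$-case are needed; the weaker exponent $\nu p$ simply reflects the weaker H\"older-type oscillation bound of Lemma \ref{lemma03272007}, itself a consequence of the limited smoothing we can extract for $\fL_0$ when $a^{ij}$ are merely measurable in $(t,x^1)$.
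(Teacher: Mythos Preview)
Your proposal is correct and follows exactly the approach the paper intends: it says the proof ``is almost identical to that of Theorem \ref{theorem04022007_01},'' and you have carried out precisely that transcription, replacing $\cL_\lambda$ by $\fL_\lambda$, Theorem \ref{theorem04012007} by Theorem \ref{theorem03262007}, and Lemma \ref{lemma04012007_04} by Lemma \ref{lemma03272007}, with the resulting loss of the $u_t$-oscillation estimate and the weaker exponent $\kappa^{-\nu p}$. Your remarks on why only $u_{xx'}$ survives and on the bookkeeping of the $\kappa$-powers are accurate.
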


\section{Proof of Theorem \ref{theorem03192007}} \label{section04092007}

In this section, as in Theorem \ref{theorem03192007},
the coefficients of 
$$
Lu = u_t + a^{ij}(t,x) u_{x^i x^j} + b^{i} u_{x^i} + c u
$$ 
satisfy Assumption \ref{assum_01} and \ref{assum_02}.
Especially, the coefficients $a^{ij}(t,x)$ are independent of $x' \in \bR^{d-1}$ if $p = 2$.
Set 
$$
L_0 u = u_t + a^{ij}(t,x) u_{x^i x^j}.
$$

As noted earlier, due to Theorem \ref{theorem04022007} in this paper,
the results in \cite{Doyoon:parabolic:2006} are now available.
This implies that, by the same reasoning as in the proof of Lemma \ref{lemma04022007},
the inequalities in Lemma \ref{lemma04022007}
are possible with $L_0$ defined above.
Then using the results in section \ref{section04082007_01} and repeating the proof of Theorem \ref{theorem04082007} (with necessary changes), we obtain

\begin{theorem}
Let $p \ge 2$.
In case $p = 2$, we assume that the coefficients $a^{ij}(t,x)$ of $L_0$ are independent of $x' \in \bR^{d-1}$.
Then there exists a constant $N$, depending on $d$, $p$, $\delta$, and 
the function $\omega$, such that,
for any $u \in C_0^{\infty}(\bR^{d+1})$, $\kappa \ge 16$,
and $r \in (0, 1/\kappa]$,
we have
$$
\dashint_{Q_r}
| u_{xx'}(t,x) - \left( u_{xx'} \right)_{Q_r} |^p \, dx \, dt
$$
$$
\le N \kappa^{d+2} \left( |L_0 u|^p \right)_{Q_{\kappa r}}
+ N \left( \kappa^{-\nu p} + \kappa^{d+2} (a_{\kappa r}^{\#})^{1/2} \right) \left( |u_{xx}|^p \right)_{Q_{\kappa r}},
$$
where $\nu = 1/2 - 3/(4p)$
\end{theorem}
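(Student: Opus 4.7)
My plan is to mirror the proof of Theorem \ref{theorem04082007}, substituting the two main new ingredients now available from Section \ref{section04082007_01}. For $u \in C_0^\infty(\bR^{d+1})$, $\kappa \ge 16$, and $r \in (0, 1/\kappa]$, I begin by setting $f := L_0 u$ and, invoking Theorems~2.2 and 2.5 of \cite{Doyoon&Krylov:parabolic:2006} together with Theorem \ref{theorem04022007} just proved in this paper, I construct $\tilde w$ on $(-3,4)\times\bR^d$ satisfying $L_0\tilde w = f I_{Q_{\kappa r}}$ with $\tilde w(4,\cdot)=0$. Multiplying $\tilde w$ by a fixed time cutoff supported in $(-2,3)$ and equal to $1$ on $[-1,2]$ produces $w \in W_p^{1,2}(\bR^{d+1})$ (and in fact $w \in W_q^{1,2}(\bR^{d+1})$ for every $q > 2$), together with the two routine bounds
$$
(|w_t|^p + |w_{xx}|^p)_{Q_r} \le N\kappa^{d+2}(|f|^p)_{Q_{\kappa r}},\qquad (|w_t|^p + |w_{xx}|^p)_{Q_{\kappa r}} \le N(|f|^p)_{Q_{\kappa r}}.
$$

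Next, $v := u - w$ satisfies $L_0 v = 0$ on $Q_{\kappa r}$. I then freeze the coefficients in a way tailored to the two different VMO components of $a_R^{\#}$, setting
$$
\bar a^{ij}(t,x^1) = \dashint_{B'_{\kappa r/2}} a^{ij}(t,x^1,y')\,dy'\ \text{for}\ (i,j)\ne(1,1),\qquad \bar a^{11}(x^1) = \dashint_{\Gamma_{\kappa r/2}} a^{11}(s,x^1,y')\,dy'\,ds,
$$
and $\bar L_0 := \partial_t + \bar a^{ij}(t,x^1)\partial_{x^i}\partial_{x^j}$. The key observation is that $\bar L_0$ is precisely of the form of an $\fL_0$ from Section \ref{section04082007_01} (coefficients measurable in $(t,x^1)$, with $\bar a^{11}$ depending only on $x^1$) and satisfies Assumption \ref{assum_01} with the same $\delta$. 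Since $v\in W_p^{1,2}(\bR^{d+1})$ and $\kappa/2\ge 8$, Theorem \ref{theorem03282007} applied to $\bar L_0$ on $Q_{\kappa r/2}$ furnishes the oscillation estimate for $v_{xx'}$ with the desired factor $\kappa^{-\nu p}$.

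To handle the remaining term, I rewrite $\bar L_0 v = (\bar a^{ij} - a^{ij})v_{x^i x^j}$ on $Q_{\kappa r/2}$ and apply H\"older to obtain
$$
(|\bar L_0 v|^p)_{Q_{\kappa r/2}} \le \Big(\dashint_{Q_{\kappa r/2}}|\bar a^{ij} - a^{ij}|^{2p}\,dx\,dt\Big)^{1/2}\Big(\dashint_{Q_{\kappa r/2}}|v_{xx}|^{2p}\,dx\,dt\Big)^{1/2}.
$$
The boundedness of the $a^{ij}$ reduces the $2p$-power in the first factor to the first power, at which point the two averaging regions pair exactly with the two terms defining $a_R^{\#}$ (the $(1,1)$ contribution being estimated by $\cO^{(t,x')}_{\kappa r/2}(a^{11})$ and the others by $\cO^{x'}_{\kappa r/2}(a^{ij})$), producing a factor $N(a_{\kappa r}^{\#})^{1/2}$. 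The higher-integrability Lemma \ref{lemma04022007} (extended to the present $L_0$ as asserted at the start of this section) bounds the second factor by $N(|v_{xx}|^p)_{Q_{\kappa r}}$. Writing $v_{xx} = u_{xx} - w_{xx}$ and combining with the $w$-bounds above then assembles the claimed inequality. I expect the main obstacle to be precisely this freezing step: one must choose the two averaging regions so that both components of $a_R^{\#}$ arise naturally in the oscillation factor, with neither $a^{11}$ nor the remaining $a^{ij}$ requiring a stronger VMO hypothesis than Assumption \ref{assum_02} supplies.
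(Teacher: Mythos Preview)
Your proposal is correct and follows essentially the same approach as the paper, which explicitly says to repeat the proof of Theorem~\ref{theorem04082007} ``with necessary changes'' using the results of Section~\ref{section04082007_01}; you have correctly identified those changes, namely the split freezing of coefficients (averaging only in $x'$ for $(i,j)\ne(1,1)$ and in $(t,x')$ for $a^{11}$) so that $\bar L_0$ has the form of an $\fL_0$, the replacement of Theorem~\ref{theorem04022007_01} by Theorem~\ref{theorem03282007}, and the use of the extended Lemma~\ref{lemma04022007}. One minor point: for the existence and estimates of $\tilde w$ under the present assumptions on $a^{ij}$ (Assumption~\ref{assum_02} rather than Assumption~\ref{assum_03}), the relevant reference is \cite{Doyoon:parabolic:2006} (made available via Theorem~\ref{theorem04022007}) rather than Theorems~2.2 and~2.5 of \cite{Doyoon&Krylov:parabolic:2006}.
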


In the following we state corollaries and a lemma corresponding
to Corollary \ref{corollary04092007}, Lemma \ref{lemma04102007}, and Corollary \ref{corollary04102007}.
Since we are dealing with $a^{ij}$ different from those in section \ref{section04102007},
we have different statements.

\begin{corollary}							\label{corollary04102007_01}
Let $p \ge 2$.
In case $p = 2$, we assume that the coefficients $a^{ij}(t,x)$ of $L_0$ are independent of $x' \in \bR^{d-1}$.
Then there exists a constant $N$, depending on $d$, $p$, $\delta$, and 
the function $\omega$, such that,
for any $u \in C_0^{\infty}(\bR^{d+1})$, $\kappa \ge 16$,
and $r \in (0, 1/\kappa]$,
we have
$$
\dashint_{(0,r^2)} \left| \varphi(t) - (\varphi)_{(0,r^2)} \right|^p \, dt
$$
$$
\le N \kappa^{d+2} (\psi^p)_{(0,(\kappa r)^2)}
+ N \left( \kappa^{-\nu p} + \kappa^{d+2} (a_{\kappa r}^{\#})^{1/2} \right) (\zeta^p)_{(0,(\kappa r)^2)},
$$
where $\nu = 1/2 - 3/(4p)$,
$$
\varphi(t) = \| u_{xx'}(t, \cdot) \|_{L_p(\bR^d)},
$$
$$
\zeta(t) = \| u_{xx}(t, \cdot) \|_{L_p(\bR^d)},
\quad
\psi(t) = \| L_{0} u(t, \cdot) \|_{L_p(\bR^d)}.
$$
\end{corollary}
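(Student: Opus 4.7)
The proof follows the pattern of Corollary \ref{corollary04092007} (which is itself modeled on the argument for Corollary 3.2 in \cite{Krylov_2007_mixed_VMO}), adapted to our setting where only the $u_{xx'}$-oscillation appears on the left-hand side and the exponent is $\nu p$ in place of $p$. My first step is to upgrade the preceding theorem from the specific cylinder $Q_r$ centered at the origin to all translates $Q_r(0, y)$, $y \in \bR^d$. This is achieved by applying the theorem to $\tilde u(t,x) := u(t, x+y)$ together with the translated coefficients $\tilde a^{ij}(t,x) := a^{ij}(t, x+y)$, which satisfy Assumption \ref{assum_01} and \ref{assum_02} with the same $\delta$, $K$, and $\omega$; crucially, the quantity $a_{\kappa r}^{\#}$ is preserved since it is a supremum over all base points. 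A change of variables then yields the same cylinder estimate for $u$ centered at $(0,y)$.

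Next, I would integrate both sides of the translated cylinder estimate over $y \in \bR^d$ (all integrals are finite since $u \in C_0^\infty$) and apply Fubini. The right-hand side collapses neatly: using $\int_{\bR^d} \int_{B_\rho(y)} |f(z)|^p\, dz\, dy = |B_\rho|\int_{\bR^d} |f(z)|^p\, dz$, the averages $(|L_0 u|^p)_{Q_{\kappa r}(0,y)}$ and $(|u_{xx}|^p)_{Q_{\kappa r}(0,y)}$ integrated in $y$ produce exactly $(\psi^p)_{(0,(\kappa r)^2)}$ and $(\zeta^p)_{(0,(\kappa r)^2)}$ respectively. On the left-hand side, after Fubini, one obtains a space-time integral whose integrand is an inner $y$-average over $B_r(z)$ of $|u_{xx'}(t,z) - A_y|^p$, where $A_y := (u_{xx'})_{Q_r(0,y)}$. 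Applying Jensen's inequality to this $y$-average in the lower-bound direction, with $H(z) := \dashint_{B_r(z)} A_y\, dy$, yields
\[
\dashint_{(0, r^2)} \|u_{xx'}(t, \cdot) - H\|_{L_p(\bR^d)}^p\, dt \le N\kappa^{d+2}(\psi^p)_{(0,(\kappa r)^2)} + N(\kappa^{-\nu p} + \kappa^{d+2}\hat{a}^{1/2})(\zeta^p)_{(0,(\kappa r)^2)}.
\]

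To finish, I would apply the reverse triangle inequality $|\varphi(t) - \|H\|_{L_p(\bR^d)}| \le \|u_{xx'}(t, \cdot) - H\|_{L_p(\bR^d)}$ together with the standard bound $\dashint_I |\varphi - (\varphi)_I|^p \le 2^p \dashint_I |\varphi - c|^p$ applied with $c := \|H\|_{L_p(\bR^d)}$, absorbing the factor $2^p$ into $N$. The conceptual core of the argument is the passage from a pointwise cylinder oscillation of $u_{xx'}$ to a time-only oscillation of the scalar function $\varphi = \|u_{xx'}\|_{L_p(\bR^d)}$; this is accomplished by the triple combination of translation-integration in $y$, Jensen's inequality in the lower-bound direction, and the reverse triangle inequality for $L_p$-norms. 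I expect the main subtlety to lie in the book-keeping: ensuring the translation argument preserves the uniform constants $\hat{a}$, $\delta$, $K$, $\omega$, and that Fubini produces exactly the right normalizations on both sides so that the target inequality is obtained without spurious factors of $\kappa$ or $r$.
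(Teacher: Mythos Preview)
Your proposal is correct and follows exactly the approach the paper indicates: Corollary \ref{corollary04102007_01} is stated as the direct analogue of Corollary \ref{corollary04092007}, which in turn the paper says ``follows from Theorem \ref{theorem04082007} above and the argument in the proof of Corollary 3.2 in \cite{Krylov_2007_mixed_VMO}.'' Your translation--integration in $y$, Fubini, Jensen in the lower-bound direction, and reverse triangle inequality constitute precisely that argument, with the only modification being the replacement of $\kappa^{-p}$ by $\kappa^{-\nu p}$ and the omission of the $u_t$ term, both inherited from the preceding theorem.
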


\begin{lemma}							\label{lemma04102007_01}
Let $p \ge 2$.
In case $p = 2$, we assume that the coefficients $a^{ij}(t,x)$ of $L_0$ are independent of $x' \in \bR^{d-1}$.
Let $R \in (0,1]$ and $u$ be a function in $C_0^{\infty}(\bR^{d+1})$
such that $u(t,x) = 0$ for $t \notin (0,R^4)$.
Then
$$
\varphi^{\#}(t_0)
\le N \kappa^{(d+2)/p} \left(M \psi^p (t_0)\right)^{1/p}
$$
$$
+ N \left( (\kappa R)^{2-2/p} + \kappa^{-\nu} + \kappa^{(d+2)/p} \left(\omega(R)\right)^{1/2p}   \right) \left(M \zeta^p (t_0)\right)^{1/p}
$$
for all $\kappa \ge 16$ and $t_0 \in \bR$,
where $\nu = 1/2 - 3/(4p)$, $N = N(d, p, \delta,\omega)$,
and the functions $\varphi$, $\zeta$, $\psi$
are defined as in Corollary \ref{corollary04102007_01}.
\end{lemma}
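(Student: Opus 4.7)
The plan is to follow the proof of Lemma \ref{lemma04102007} essentially verbatim, substituting Corollary \ref{corollary04102007_01} for Corollary \ref{corollary04092007} at the critical step. Two structural features change in passing from the earlier argument: there is only one sharp-function quantity to control ($\varphi^{\#}$, since no $u_t$-oscillation bound is available in the present setting), and the decay rate in $\kappa$ on the right-hand side is $\kappa^{-\nu}$ rather than $\kappa^{-1}$, reflecting the $\kappa^{-\nu p}$ factor in the new corollary (which itself traces back to the Hölder-continuity-based estimate of Lemma \ref{lemma03272007}).

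I would fix $t_0 \in \bR$ and split the analysis into two regimes according to the length of an interval $(\sfa, \sfb)$ containing $t_0$. In the short-interval regime $\sfb - \sfa \le R^2/\kappa^2$, set $r = \sqrt{\sfb - \sfa}$, so that $\kappa r \le R \le 1$ and hence $a_{\kappa r}^{\#} \le \omega(R)$; then a translated version of Corollary \ref{corollary04102007_01} controls the $L^p$-oscillation of $\varphi$ on $(\sfa,\sfb)$ by averages of $\psi^p$ and $\zeta^p$ on $(\sfa, \sfc)$ with $\sfc = \sfa + \kappa^2(\sfb - \sfa)$. Dominating these averages by the one-dimensional maximal function at $t_0$ and applying Hölder's inequality on the left produces the terms $N\kappa^{(d+2)/p}(M\psi^p(t_0))^{1/p}$ and $N\bigl(\kappa^{-\nu} + \kappa^{(d+2)/p}(\omega(R))^{1/(2p)}\bigr)(M\zeta^p(t_0))^{1/p}$ with the claimed constants.

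In the long-interval regime $\sfb - \sfa > R^2/\kappa^2$, I would exploit the compact support assumption $u(t,\cdot) \equiv 0$ for $t \notin (0,R^4)$: bound $\dashint_{(\sfa,\sfb)} |\varphi - (\varphi)_{(\sfa,\sfb)}| \, dt$ by $2\dashint_{(\sfa,\sfb)} I_{(0,R^4)}|\varphi| \, dt$, then apply Hölder with exponents $p/(p-1)$ and $p$. This yields a factor $\bigl(R^4(\sfb-\sfa)^{-1}\bigr)^{1-1/p} \le (\kappa R)^{2-2/p}$ multiplying $(M\varphi^p(t_0))^{1/p}$. Here, unlike in Lemma \ref{lemma04102007}, no separate $M\phi^p$ term arises because only $\varphi^{\#}$ is being estimated; moreover the pointwise inequality $|u_{xx'}| \le |u_{xx}|$ allows me to replace $M\varphi^p(t_0)$ by $M\zeta^p(t_0)$, so this long-interval contribution merges into the single $(M\zeta^p(t_0))^{1/p}$ coefficient in the statement.

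The two regimes are then combined by taking the supremum over all intervals $(\sfa,\sfb) \ni t_0$ on the left, which produces $\varphi^{\#}(t_0)$, and retaining the worst constants from the two cases on the right. I do not anticipate a serious obstacle: the proof is a direct transcription of the one for Lemma \ref{lemma04102007}, the only bookkeeping point being that the weaker decay exponent $\nu = 1/2 - 3/(4p) < 1$ is precisely what the Hölder-continuity estimate of Lemma \ref{lemma03272007} delivers when $u_t$ is no longer part of the controlled output—this is the inherent trade-off made in section \ref{section04082007_01} to accommodate coefficients $a^{ij}$ depending on $t$.
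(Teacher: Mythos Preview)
Your proposal is correct and takes essentially the same approach as the paper, which does not write out a separate proof for this lemma but indicates it is the analogue of Lemma~\ref{lemma04102007} with Corollary~\ref{corollary04102007_01} in place of Corollary~\ref{corollary04092007}. You have correctly identified the two modifications---dropping the $\phi^{\#}$ term and replacing $\kappa^{-1}$ by $\kappa^{-\nu}$---and handled the long-interval case for $\varphi$ exactly as the paper does in the proof of Lemma~\ref{lemma04102007}.
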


The proof of the next corollary clearly shows
the necessity of the result
for the case with $a^{ij}(t,x)$ measurable in $x^1 \in \bR$
and VMO in $(t,x') \in \bR^d$ (Theorem \ref{theorem04022007},
specifically, Corollary \ref{corollary04102007}).

\begin{corollary}
Let $q > p \ge 2$.
Assume that, in case $p = 2$,
the coefficients $a^{ij}$ of $L_0$
are independent of $x' \in \bR^{d-1}$.
Then there exists $R = R(d, p, q, \delta, \omega)$
such that,
for any $u \in C_0^{\infty}(\bR^{d+1})$
satisfying $u(t,x) = 0$ for $t \notin (0, R^4)$,
$$
\|u_t\|_{L_{q,p}}
+ \|u_{xx}\|_{L_{q,p}}
\le N \| L_0 u \|_{L_{q,p}},
$$
where $N = N(d, p, q, \delta, \omega)$.
\end{corollary}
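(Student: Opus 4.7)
The plan is to mimic the proof of Corollary \ref{corollary04102007} but, because Lemma \ref{lemma04102007_01} controls only $\varphi^{\#}$ (not $\phi^{\#}$), to recover the missing bounds on $u_t$ and $u_{x^1 x^1}$ by invoking Corollary \ref{corollary04102007} itself, applied to an auxiliary operator whose only non-constant coefficient is $a^{11}$. This forward dependence on the result of Section \ref{section04102007} is precisely the ``necessity'' alluded to just before the statement.

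First I would apply the Fefferman--Stein sharp function theorem in the time variable (valid for $q/p > 1$ and $u$ of compact support), followed by the Hardy--Littlewood maximal inequality, to the pointwise bound in Lemma \ref{lemma04102007_01}. For $u \in C_0^{\infty}(\bR^{d+1})$ with $u(t,x) = 0$ for $t \notin (0, R^4)$, $R \in (0, 1]$, and $\kappa \ge 16$, this yields
$$
\| u_{xx'} \|_{L_{q,p}} \le N \kappa^{(d+2)/p} \| L_0 u \|_{L_{q,p}} + N \left( (\kappa R)^{2-2/p} + \kappa^{-\nu} + \kappa^{(d+2)/p} \omega(R)^{1/(2p)} \right) \| u_{xx} \|_{L_{q,p}}.
$$
Unlike in the proof of Corollary \ref{corollary04102007}, this controls only the mixed derivatives $u_{xx'}$, not $u_t$ or $u_{x^1 x^1}$.

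To recover $u_t$ and $u_{x^1 x^1}$, introduce the auxiliary operator
$$
\tilde L_0 u = u_t + a^{11}(t,x) u_{x^1 x^1} + \sum_{i \ne 1 \text{ or } j \ne 1} \alpha^{ij} u_{x^i x^j},
$$
where $\alpha^{ij}$ are constants chosen so that the resulting coefficient matrix is uniformly elliptic with the same constant $\delta$ (for example, $\alpha^{ii} = 1$, $\alpha^{ij} = 0$ otherwise, combined with a rescaling absorbed into $\delta$). By Assumption \ref{assum_02} the coefficient $a^{11}$ is measurable in $x^1$ and VMO in $(t, x')$, so the coefficients of $\tilde L_0$ satisfy Assumption \ref{assum_03} with the same modulus $\omega$; and if $p = 2$ then $a^{11}$ is independent of $x'$, hence so are all coefficients of $\tilde L_0$. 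Corollary \ref{corollary04102007} therefore applies to $\tilde L_0$ and gives
$$
\| u_t \|_{L_{q,p}} + \| u_{xx} \|_{L_{q,p}} \le N \| \tilde L_0 u \|_{L_{q,p}}.
$$
Writing $\tilde L_0 u = L_0 u + \sum_{i \ne 1 \text{ or } j \ne 1} (\alpha^{ij} - a^{ij}(t,x)) u_{x^i x^j}$ and observing that every derivative in the sum is a component of $u_{xx'}$, the triangle inequality together with $|\alpha^{ij} - a^{ij}| \le N(\delta)$ produces
$$
\| u_t \|_{L_{q,p}} + \| u_{xx} \|_{L_{q,p}} \le N \| L_0 u \|_{L_{q,p}} + N \| u_{xx'} \|_{L_{q,p}}.
$$

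Substituting the first-step estimate into this inequality yields a bound of the form
$$
\| u_t \|_{L_{q,p}} + \| u_{xx} \|_{L_{q,p}} \le N_1 \kappa^{(d+2)/p} \| L_0 u \|_{L_{q,p}} + N_2 \left( (\kappa R)^{2-2/p} + \kappa^{-\nu} + \kappa^{(d+2)/p} \omega(R)^{1/(2p)} \right) \| u_{xx} \|_{L_{q,p}}.
$$
The argument closes in the usual way: first fix $\kappa$ large so that $N_2 \kappa^{-\nu} < 1/4$, then fix $R$ small (depending on the already chosen $\kappa$) so that $N_2 (\kappa R)^{2-2/p} < 1/4$ and $N_2 \kappa^{(d+2)/p} \omega(R)^{1/(2p)} < 1/4$; this is legitimate because $\nu = 1/2 - 3/(4p) > 0$, $2 - 2/p > 0$, and $\omega(R) \to 0$ as $R \to 0$. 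The $\| u_{xx} \|_{L_{q,p}}$ term on the right is then absorbed into the left side. The only genuinely delicate point is verifying that the auxiliary operator $\tilde L_0$ falls within the scope of Corollary \ref{corollary04102007}; once that is granted, the rest is bookkeeping parallel to the proof of that earlier corollary.
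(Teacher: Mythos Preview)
Your proposal is correct and follows essentially the same route as the paper: the paper also derives the $\|u_{xx'}\|_{L_{q,p}}$ bound from Lemma \ref{lemma04102007_01} via Fefferman--Stein/Hardy--Littlewood, then applies Corollary \ref{corollary04102007} to the auxiliary operator $L_1 u = u_t + a^{11} u_{x^1x^1} + \Delta_{d-1} u$ (your $\tilde L_0$ with $\alpha^{ii}=1$, $\alpha^{ij}=0$ otherwise---no rescaling is needed since $\delta<1$) to control the remaining derivatives, and absorbs. The only omission is that Corollary \ref{corollary04102007} comes with its own support threshold $R_1$, so in the final step you must take $R \le R_1$ as well; the paper records this explicitly by working on $(0,\min\{R^4,R_1^4\})$.
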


\begin{proof}
As is seen in Corollary \ref{corollary04102007},
from Lemma \ref{lemma04102007_01},
we obtain
$$
\|u_{xx'}\|_{L_{q,p}}
\le N \kappa^{(d+2)/p} \| L_0 u \|_{L_{q,p}}
$$
$$
+ N \left( (\kappa R)^{2-2/p} + \kappa^{-\nu} + \kappa^{(d+2)/p} \left(\omega(R)\right)^{1/2p}   \right) \| u_{xx} \|_{L_{q,p}}
$$
for all $\kappa \ge 16$ and $R \in (0,1]$,
where $\nu = 1/2 - 3/(4p)$.
To obtain an estimate for $u_{x^1x^1}$, we set
$$
g = L_0 u + \Delta_{d-1} u - \sum_{i \ne 1, j \ne 1} a^{ij}u_{x^ix^j},
$$
where $\Delta_{d-1}u = \sum_{i=2}^d u_{x^ix^i}$.
Then
$$
L_1 u := u_t + a^{11} u_{x^1x^1} + \Delta_{d-1} u = g
$$
and the operator $L_1$ satisfies the assumptions in Corollary \ref{corollary04102007}.
Thus there exist $R_1 = R_1(d, p, q, \delta, \omega)$ 
and $N = N(d, p, q, \delta, \omega)$
such that
$$
\|u_{x^1x^1}\|_{L_{q,p}}
\le N \|g\|_{L_{q,p}}
\le N \left( \|L_0 u\|_{L_{q,p}} + \|u_{xx'}\|_{L_{q,p}}\right)
$$
for all $u \in C_0^{\infty}(\bR^{d+1})$ such that
$u(t,x) = 0$ for $t \notin (0,R_1^4)\times\bR^d$.
From this together with the estimate for $\|u_{xx'}\|_{L_{q,p}}$
above,
we have
$$
\|u_{xx}\|_{L_{q,p}}
\le N \kappa^{(d+2)/p} \| L_0 u \|_{L_{q,p}}
$$
$$
+ N \left( (\kappa R)^{2-2/p} + \kappa^{-\nu} + \kappa^{(d+2)/p} \left(\omega(R)\right)^{1/2p}   \right) \| u_{xx} \|_{L_{q,p}}
$$
if $u(t,x) = 0$ for $t \notin (0, \min\{R^4, R_1^4\})$.
Now
we choose a large $\kappa$ and then a small $R$ (smaller than $R_1$)
such that
$$
N \left( (\kappa R)^{2-2/p} + \kappa^{-\nu} + \kappa^{(d+2)/p} \left(\omega(R)\right)^{1/2p}   \right)  < 1/2
$$
(note that $\nu > 0$).
Then we have
$$
\|u_{xx}\|_{L_{q,p}}
\le 
2 N \kappa^{(d+2)/p} \| L_0 u \|_{L_{q,p}}.
$$
Finally, notice that
$$
\|u_t\|_{L_{q,p}}
= \| L_0 u - a^{ij}u_{x^ix^j} \|_{L_{q,p}}
\le \|L_0 u\|_{L_{q,p}} + N \|u_{xx}\|_{L_{q,p}}.
$$
The corollary is now proved.
~\end{proof}

As in section \ref{section04102007},
using the $L_{q,p}$-estimate proved above for
functions with compact support with respect to $t \in \bR$
and following the proofs in section 3 in \cite{Krylov_2007_mixed_VMO},
we complete the proof of Theorem \ref{theorem03192007}.

\bibliographystyle{plain}

\def\cprime{$'$}\def\cprime{$'$} \def\cprime{$'$} \def\cprime{$'$}
  \def\cprime{$'$} \def\cprime{$'$}

\end{document}